\documentclass[a4paper]{article}
\usepackage[margin=1in]{geometry}

\usepackage{mathtools}
\usepackage{amsmath,amssymb,amsfonts}%
\usepackage{amsthm}%
\usepackage{mathrsfs}%
\usepackage{textcomp}%
\usepackage{manyfoot}%
\usepackage{xparse}
\usepackage[normalem]{ulem}
\usepackage{subcaption}
\usepackage{graphicx}

\usepackage{hyperref}
\hypersetup{
	colorlinks,
	linkcolor=blue,
	citecolor=blue,
	urlcolor=blue,
}
\usepackage[capitalize]{cleveref}

\usepackage{import}
\usepackage{amsthm}
\usepackage{xparse}
\usepackage{import}
\usepackage[normalem]{ulem}

\makeatletter
\@ifpackageloaded{unicode-math}{%
	\newcommand{\mymathbold}{\symbf}%
}{%
	\usepackage{bm}%
	\newcommand{\mymathbold}{\bm}%
}
\makeatother

\newcommand{\scrbar}[1]{\overline{\mathcal{#1}}}
\newcommand{\scrhat}[1]{\widehat{\mathcal{#1}}}
\newcommand{\scrtl}[1]{\widetilde{\mathcal{#1}}}

\ExplSyntaxOn

\cs_new_protected:Nn \bamboo_define:nnnnnN
{
	\cs_new_protected:cpx { #3 #1 #4 } { \exp_not:N #5{#6{#2}} }
}

\int_step_inline:nnn { `A } { `Z }
{
	\bamboo_define:nnnnnN
	{ \char_generate:nn { #1 } { 11 } }
	{ \char_generate:nn { #1 } { 11 } }
	{ bl }
	{ }
	{ \mymathbold }
	\use:n
	
	\bamboo_define:nnnnnN
	{ \char_generate:nn { #1 } { 11 } }
	{ \char_generate:nn { #1 } { 11 } }
	{ scr }
	{ }
	{ \mathcal }
	\use:n
	
	\bamboo_define:nnnnnN
	{ \char_generate:nn { #1 } { 11 } }
	{ \char_generate:nn { #1 } { 11 } }
	{ }
	{ hat }
	{ \widehat }
	\use:n
	
	\bamboo_define:nnnnnN
	{ \char_generate:nn { #1 } { 11 } }
	{ \char_generate:nn { #1 } { 11 } }
	{ scr }
	{ hat }
	{ \scrhat }
	\use:n
	
	\bamboo_define:nnnnnN
	{ \char_generate:nn { #1 } { 11 } }
	{ \char_generate:nn { #1 } { 11 } }
	{ }
	{ br }
	{ \overline }
	\use:n
	
	\bamboo_define:nnnnnN
	{ \char_generate:nn { #1 } { 11 } }
	{ \char_generate:nn { #1 } { 11 } }
	{ }
	{ tl }
	{ \widetilde }
	\use:n
	
	\bamboo_define:nnnnnN
	{ \char_generate:nn { #1 } { 11 } }
	{ \char_generate:nn { #1 } { 11 } }
	{ scr }
	{ br }
	{ \scrbar }
	\use:n
	
	\bamboo_define:nnnnnN
	{ \char_generate:nn { #1 } { 11 } }
	{ \char_generate:nn { #1 } { 11 } }
	{ scr }
	{ tl }
	{ \scrtl }
	\use:n
	
	\bamboo_define:nnnnnN
	{ \char_generate:nn { #1 } { 11 } }
	{ \char_generate:nn { #1 } { 11 } }
	{ }
	{ dt }
	{ \dot}
	\use:n
}
\int_step_inline:nnn { `a } { `z }
{
	\bamboo_define:nnnnnN
	{ \char_generate:nn { #1 } { 11 } }
	{ \char_generate:nn { #1 } { 11 } }
	{ bl }
	{ }
	{ \mymathbold }
	\use:n
	
	\bamboo_define:nnnnnN
	{ \char_generate:nn { #1 } { 11 } }
	{ \char_generate:nn { #1 } { 11 } }
	{ }
	{ hat }
	{ \hat }
	\use:n
	
	\bamboo_define:nnnnnN
	{ \char_generate:nn { #1 } { 11 } }
	{ \char_generate:nn { #1 } { 11 } }
	{ }
	{ br }
	{ \bar }
	\use:n
	
	\bamboo_define:nnnnnN
	{ \char_generate:nn { #1 } { 11 } }
	{ \char_generate:nn { #1 } { 11 } }
	{ }
	{ tl }
	{ \tilde }
	\use:n                            
	
	\bamboo_define:nnnnnN
	{ \char_generate:nn { #1 } { 11 } }
	{ \char_generate:nn { #1 } { 11 } }
	{ }
	{ dt }
	{ \dot}
	\use:n
}
\clist_map_inline:nn
{
	Gamma,Delta,Theta,Lambda,Xi,Pi,Sigma,Phi,Psi,Omega
}
{
	\bamboo_define:nnnnnN
	{ #1 }
	{ #1 }
	{ bl }
	{ }
	{ \mymathbold }
	\use:c
	
	\bamboo_define:nnnnnN
	{ #1 }
	{ #1 }
	{ op }
	{ }
	{ \op }
	\use:c
	
	\bamboo_define:nnnnnN
	{ #1 }
	{ #1 }
	{ scr }
	{ }
	{ \mathcal }
	\use:c
	
	\bamboo_define:nnnnnN
	{ #1 }
	{ #1 }
	{ }
	{ hat }
	{ \widehat }
	\use:c
	
	\bamboo_define:nnnnnN
	{ #1 }
	{ #1 }
	{ }
	{ br }
	{ \overline }
	\use:c
	
	\bamboo_define:nnnnnN
	{ #1 }
	{ #1 }
	{ }
	{ tl }
	{ \widetilde }
	\use:c
	
	\bamboo_define:nnnnnN
	{ #1 }
	{ #1 }
	{ }
	{ dt }
	{ \dot}
	\use:c
	
}
\clist_map_inline:nn
{
	alpha,beta,gamma,delta,epsilon,zeta,eta,theta,iota,kappa,
	lambda,mu,nu,xi,pi,rho,sigma,tau,phi,chi,psi,omega
}
{
	\bamboo_define:nnnnnN
	{ #1 }
	{ #1 }
	{ bl }
	{ }
	{ \mymathbold }
	\use:c
	
	\bamboo_define:nnnnnN
	{ #1 }
	{ #1 }
	{ }
	{ hat }
	{ \hat }
	\use:c
	
	\bamboo_define:nnnnnN
	{ #1 }
	{ #1 }
	{ }
	{ br }
	{ \bar }
	\use:c
	
	\bamboo_define:nnnnnN
	{ #1 }
	{ #1 }
	{ }
	{ tl }
	{ \tilde }
	\use:c
	
	\bamboo_define:nnnnnN
	{ #1 }
	{ #1 }
	{ }
	{ dt }
	{ \dot}
	\use:c
}

\ExplSyntaxOff

\DeclareMathOperator{\E}{\mathbf{E}}

\newcommand{\rank}{\operatorname{rank}}

\newcommand{\tr}{\operatorname{tr}}

\newcommand{\diag}{\operatorname{diag}}

\DeclarePairedDelimiter{\norm}{\lVert}{\rVert}

\DeclarePairedDelimiter{\abs}{\lvert}{\rvert}
\DeclarePairedDelimiter{\floor}{\lfloor}{\rfloor}
\DeclarePairedDelimiter{\ceil}{\lceil}{\rceil}
\DeclarePairedDelimiter{\braces}{\{}{\}}
\DeclarePairedDelimiter{\parens}{(}{)}
\DeclarePairedDelimiter{\brackets}{[}{]}
\DeclarePairedDelimiterX{\ip}[2]{\langle}{\rangle}{#1,#2}

\DeclarePairedDelimiterXPP{\normsub}[2]{}{\lVert}{\rVert}{_{#2}}{#1}
\DeclarePairedDelimiterXPP{\ipsub}[3]{}{\langle}{\rangle}{_{#3}}{#1,#2}

\DeclarePairedDelimiterXPP{\ipHS}[2]{}{\langle}{\rangle}{_{\mathrm{HS}}}{#1, #2}
\DeclarePairedDelimiterXPP{\normHS}[1]{}{\lVert}{\rVert}{_{\mathrm{HS}}}{#1}

\DeclarePairedDelimiterXPP{\ipF}[2]{}{\langle}{\rangle}{_{\mathrm{F}}}{#1, #2}
\DeclarePairedDelimiterXPP{\normF}[1]{}{\lVert}{\rVert}{_{\mathrm{F}}}{#1}

\DeclarePairedDelimiterXPP{\dkl}[2]{\operatorname{D_{KL}}}{(}{)}{}{#1 \: \delimsize\Vert \: #2}

\DeclarePairedDelimiterXPP{\restr}[2]{}{{}}{\vert}{_{#2}}{#1}

\newcommand{\R}{\mathbf{R}}

\newcommand{\range}{\operatorname{range}}

\newcommand{\negqquad}{\mspace{-36mu}}

\theoremstyle{plain}%
\newtheorem{theorem}{Theorem}
\newtheorem{lemma}{Lemma}%

\theoremstyle{definition}%
\Crefname{assumption}{Assumption}{Assumptions}
\newtheorem{example}{Example}%
\newtheorem{remark}{Remark}%

\DeclarePairedDelimiterXPP{\opnorm}[1]{}{\lVert}{\rVert}{_{\mathrm{op}}}{#1}
\DeclarePairedDelimiterXPP{\nucnorm}[1]{}{\lVert}{\rVert}{_{\mathrm{nuc}}}{#1}

\newcommand{\transpose}{^\top\! } 

\newcommand{\stquad}{\quad\text{s.t.}\quad}
\newcommand{\stqquad}{\qquad\text{s.t.}\qquad}
\newcommand{\PT}{\scrP_{\scrT}}
\newcommand{\PTp}{\scrP_{\scrT^\perp}}

\newcommand{\symms}{\mathbf{S}}

\newcommand{\T}{\scrT}
\newcommand{\Tp}{\scrT^\perp}
\newcommand{\Tst}{\scrT_*}
\newcommand{\Tstp}{\Tst^\perp}
\newcommand{\PTst}{\scrP_{\Tst}}
\newcommand{\PTstp}{\scrP_{\Tstp}}

\newcommand{\Ust}{U_*}
\newcommand{\Vst}{V_*}
\newcommand{\Ustbr}{\Ubr_*}
\newcommand{\Vstbr}{\Vbr_*}
\newcommand{\PU}{P_U}
\newcommand{\PUp}{P_U^\perp}
\newcommand{\PV}{P_V}
\newcommand{\PVp}{P_V^\perp}
\newcommand{\PUst}{P_{\Ustbr}}
\newcommand{\PUstp}{P_{\Ustbr}^\perp}
\newcommand{\PVst}{P_{\Vstbr}}
\newcommand{\PVstp}{P_{\Vstbr}^\perp}

\newcommand{\Mp}{M_\perp}

\newcommand{\rstar}{r_*}
\newcommand{\rmax}{r_{\mathrm{max}}}
\newcommand{\Mst}{M_*}
\newcommand{\Mopt}{M_{\mathrm{opt}}}
\newcommand{\Up}{U_\perp}
\newcommand{\Vp}{V_\perp}
\newcommand{\Qp}{Q_\perp}
\newcommand{\PQ}{P_Q}
\newcommand{\PQp}{P_{\Qp}}

\newcommand{\Wp}{W_\perp}

\newcommand{\cp}{c_\perp}
\newcommand{\ap}{a_\perp}
\newcommand{\kapinv}{\kappa^{-1}}
\newcommand{\mueff}{\mu_{\mathrm{eff}}}
\newcommand{\deltac}{\delta_{\mathrm{crit}}}

\newcommand{\Uspace}{\R^{d \times r}}
\newcommand{\Mspace}{\R^{d_1 \times d_2}}
\newcommand{\Mspaced}{\R^{d \times d}}
\newcommand{\Lspace}{\R^{d_1 \times r}}
\newcommand{\Rspace}{\R^{d_2 \times r}}
\newcommand{\Lstspace}{\R^{d_1 \times \rstar}}
\newcommand{\Rstspace}{\R^{d_2 \times \rstar}}

\newcommand{\errmat}{H}
\newcommand{\genmat}{E}
\newcommand{\genrk}{k}
\newcommand{\genrkk}{\ell}
\newcommand{\HT}{\errmat_{\T}}
\newcommand{\HTp}{\errmat_{\Tp}}
\newcommand{\HTst}{\errmat_{\Tst}}
\newcommand{\HTstp}{\errmat_{\Tstp}}
\newcommand{\rsmall}{r_1}
\newcommand{\rbig}{r_2}
\newcommand{\deltak}{\delta_\genrk}
\newcommand{\PLlong}{Polyak-Lojasiewicz}

\newcommand{\admemail}{andrew.mcrae@enpc.fr}
\newcommand{\ryzemail}{ryz@illinois.edu}
\begin{document}
	\title{Sharp recovery and landscape guarantees\\ for the nonconvex matrix LASSO}
	\author{Andrew D. McRae\thanks{CERMICS, CNRS, ENPC, Institut Polytechnique de Paris, Marne-la-Vallée, France. E-mail: \href{mailto:\admemail}{\admemail}.}
		\and 
		Richard Y. Zhang\thanks{Dept.\ of Electrical and Computer Engineering, University of Illinois Urbana-Champaign, United States. E-mail: \href{mailto:\ryzemail}{\ryzemail}.}
	}

\maketitle

\begin{abstract}
Low-rank matrix recovery can be solved to statistical optimality by convex matrix optimization under the classical assumption of restricted isometry property (RIP).
However, for large problems, the convex formulation is commonly replaced by a smooth rank-constrained factored nonconvex problem
for which algorithmic theory typically only guarantees convergence to second-order critical points.
In this paper, we develop a sharp and statistically optimal theory for second-order critical points of the factored nonconvex matrix LASSO (nuclear-norm--regularized least-squares estimator) under RIP
with particular emphasis on the overparametrized regime
where the search rank $r$ exceeds the ground-truth rank $r_*$.
Our recovery error bounds reveal the precise role of nuclear norm regularization, interpolating between the classical convex rate and known rates for the unregularized nonconvex problem.
Complementing this positive result, we give examples showing that, contrary to popular belief, rank overparametrization does not always improve the optimization landscape even under RIP.
This negative result raises questions about the fundamental statistical recovery capability of rank-constrained nonconvex approaches in comparison to convex approaches which have worse computational scaling.
All of our results generalize to arbitrary convex functions with nuclear-norm regularization under restricted strong convexity and smoothness.
In particular, we give sharp conditions under which second-order critical points of the nonconvex problem either (1) approximately recover low-rank approximate minima of the convex problem or (2) exactly recover a low-rank global optimum if one exists.
\end{abstract}

\section{Introduction}
\label{sec:intro}
Low-rank matrix recovery is a basic task in
statistics, signal processing, and machine learning ~\cite{Davenport2016,Hu2022}.
In the linear version of the problem, we seek to recover an unknown matrix
$\Mst \in \Mspace$ of rank $\rstar \ll \min\{d_{1},d_{2}\}$
from $n$ noisy linear measurements of the form
\begin{equation}
	b = \scrA(\Mst)+\xi \in\R^{n}, \qquad \text{where} \qquad (\scrA(M))_{i}=\ip{A_i}{M}, \label{eq:ms_model}
\end{equation}
$A_1, \dots, A_n \in \Mspace$ are known measurement matrices,
and $\ip{\cdot}{\cdot}$ denotes the standard (trace) inner product on matrices.
This problem is NP-hard in general
but has been shown to be tractable in many settings
via convex optimization heuristics~\cite{Fazel2002,Recht2010}.
In particular, one popular convex approach is the matrix LASSO~\cite{Ma2011a,Rohde2011,Candes2011b,Negahban2011}:
\begin{equation}
	\min_{M\in\Mspace}~\frac{1}{2}\norm{\scrA(M)-b}^{2}+\lambda\nucnorm M,\label{eq:lasso} 
\end{equation}
where $\norm{\cdot}$ denotes the ordinary Euclidean norm on $\R^n$,
and $\nucnorm{\cdot}$ denotes the nuclear norm (the sum of
the singular values).
Later extensions of the matrix LASSO to nonlinear measurements have also been shown to be effective~\cite{Lafond2015}.

Theoretically explaining the tractability of low-rank matrix recovery
requires making assumptions on the measurements.
A classical assumption is the matrix \emph{restricted isometry property} (RIP), introduced in \cite{Recht2010},
which requires the linear measurement operator $\scrA$ to
preserve approximately the Euclidean norms of low-rank matrices.
We defer a more
general nonlinear version to \Cref{sec:res_gen}.
We say that $\scrA$ has $(\genrk,\deltak)$-RIP if it satisfies
\begin{equation}
	\rank(\genmat)\leq \genrk \quad \implies \quad
	(1-\deltak)\normF{\genmat}^{2}\leq \norm{\scrA(\genmat)}^{2}\leq (1+\deltak) \normF{\genmat}^{2}, \label{eq:rip}
\end{equation}
where $\normF{\cdot}$ is the matrix Frobenius (elementwise Euclidean) norm.
For example, when the measurement matrices $A_1, \dots, A_n$ are independent random matrices whose entries are independent standard normal random variables,
$\scrA$ has, with high probability and with appropriate rescaling, $(\genrk, \deltak)$-RIP if the number of measurements satisfies $n \gtrsim \deltak^{-2} \genrk (d_1 + d_2)$~\cite{Candes2011b,Davenport2016}.

Low-rank matrix recovery under RIP, Gaussian measurements, or other similar assumptions is commonly known as \emph{matrix sensing}.
Although RIP is a restrictive assumption for many applications,
it is a valuable theoretical starting point for more realistic settings,
such as matrix completion or phase retrieval \cite{Davenport2016}.

The matrix LASSO (\ref{eq:lasso}) was shown in \cite{Candes2011b,Rohde2011,Negahban2011}
to achieve statistically optimal recovery under RIP:
\begin{theorem}[{\cite[Thms.~2.4 \& 2.5]{Candes2011b}}]
	\label{thm:lasso}If $\mathcal{A}$ satisfies $(\genrk,\deltak)$-RIP
	and 
	\[
		\deltak < \frac{3\sqrt{2}-1}{17}
		\quad \text{for} \quad \genrk \geq 4\rstar,
	\]
	then any solution $\widehat{M}$ with $\lambda \geq 2\opnorm{\scrA^*(\xi)}$
	satisfies 
	\[
		\normF{\Mhat - \Mst} \lesssim \sqrt{\rstar} \lambda.
	\]
	With Gaussian noise, the above error bound for $\lambda\approx\opnorm{\scrA^{*}(\xi)}$
	is statistically optimal within a multiplicative constant. 
\end{theorem}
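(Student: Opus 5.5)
We follow the classical cone-plus-RIP argument of Candès--Plan. Set $\errmat = \Mhat - \Mst$. The first step is the basic inequality from optimality of $\Mhat$ in \eqref{eq:lasso}:
\[
\tfrac12\norm{\scrA(\errmat)}^2 \le \ip{\scrA^*(\xi)}{\errmat} + \lambda\bigl(\nucnorm{\Mst} - \nucnorm{\Mst+\errmat}\bigr).
\]
We decompose $\errmat$ using the tangent space $\Tst$ of $\Mst$, built from its column and row spaces $\Ustbr,\Vstbr$. Write $\HTstp = \PUstp \errmat \PVstp$ and $\HTst = \errmat - \HTstp$, so that $\rank(\HTst)\le 2\rstar$. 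Decomposability of the nuclear norm gives $\nucnorm{\Mst+\errmat} \ge \nucnorm{\Mst} + \nucnorm{\HTstp} - \nucnorm{\HTst}$. Since $\abs{\ip{\scrA^*(\xi)}{\errmat}} \le \opnorm{\scrA^*(\xi)}\nucnorm{\errmat} \le \frac{\lambda}{2}(\nucnorm{\HTst}+\nucnorm{\HTstp})$, we obtain
\[
\tfrac12\norm{\scrA(\errmat)}^2 \le \tfrac{3\lambda}{2}\nucnorm{\HTst} - \tfrac{\lambda}{2}\nucnorm{\HTstp}.
\]
This yields the cone condition $\nucnorm{\HTstp}\le 3\nucnorm{\HTst}$ and the bound $\norm{\scrA(\errmat)}^2 \le 3\lambda\nucnorm{\HTst} \le 3\lambda\sqrt{2\rstar}\,\normF{\HTst}$.

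The second step converts the RIP into a lower bound on $\norm{\scrA(\errmat)}$ on this cone. This is the main obstacle, since $\errmat$ is not low rank. We will use the standard block (shelling) decomposition. Take the SVD of $\HTstp$ and split it into $\HTstp = \sum_{j\ge1} E_j$, where each $E_j$ collects the next $2\rstar$ largest singular values (block size $m=2\rstar$). The decreasing-singular-value trick gives
\[
\sum_{j\ge2}\normF{E_j} \le \frac{\nucnorm{\HTstp}}{\sqrt{m}} \le \frac{3\nucnorm{\HTst}}{\sqrt{2\rstar}} \le 3\normF{\HTst}.
\]
Set $E_0 = \HTst + E_1$, which has rank at most $4\rstar \le \genrk$ and is Frobenius-orthogonal to the other blocks. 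RIP together with the polarization inequality $\abs{\ip{\scrA(X)}{\scrA(Y)}}\le \deltak\normF X\normF Y$ for orthogonal $X,Y$ with $\rank(X)+\rank(Y)\le\genrk$ then gives
\[
\norm{\scrA(\errmat)}\,\norm{\scrA(E_0)} \ge \ip{\scrA(E_0)}{\scrA(\errmat)} \ge (1-\deltak)\normF{E_0}^2 - \deltak\normF{E_0}\sum_{j\ge2}\normF{E_j}.
\]
Combining, we get $\norm{\scrA(\errmat)} \ge c(\deltak)\normF{E_0}$ with $c>0$ exactly when $\deltak$ is below a threshold. We expect the precise constant $(3\sqrt2-1)/17$ to come from optimizing the splitting, possibly by splitting $\HTst$ itself into two rank-$\rstar$ pieces (as in Candès--Plan) and by tracking $\sqrt2$ factors carefully. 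That bookkeeping is where we would spend effort, checking against their proof of Thm.~2.4.

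Finally, we use $\normF{\errmat} \le \normF{E_0} + \sum_{j\ge2}\normF{E_j} \lesssim \normF{E_0}$. Combining with the upper bound from the first step gives
\[
c^2\normF{E_0}^2 \le 3\lambda\sqrt{2\rstar}\,\normF{E_0},
\]
hence $\normF{\errmat}\lesssim\sqrt{\rstar}\lambda$. The optimality claim (Thm.~2.5) would be proven separately by a minimax lower bound over rank-$\rstar$ matrices. We would use Fano's method with a packing of low-rank matrices, together with RIP to control KL divergences, and compare against the known scaling of $\opnorm{\scrA^*(\xi)}\approx\sigma\sqrt{d_1+d_2}$ for Gaussian noise.
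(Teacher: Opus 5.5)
The paper gives no proof of this statement; it is quoted from Candès and Plan. Your outline is the standard argument of that line of work: basic inequality, cone condition $\nucnorm{\HTstp}\le 3\nucnorm{\HTst}$, shelling of $\HTstp$ into rank-$2\rstar$ blocks, and RIP on $E_0=\HTst+E_1$.

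There is, however, a concrete gap at the cross-term step. You apply $\abs{\ip{\scrA(X)}{\scrA(Y)}}\le\deltak\normF{X}\normF{Y}$ to the pair $(E_0,E_j)$. But $\rank(E_0)+\rank(E_j)$ can be $4\rstar+2\rstar=6\rstar$, while the hypothesis allows $\genrk=4\rstar$. So your displayed lower bound actually needs $\delta_{6\rstar}$. With coefficient $1$ in front of $\deltak$, it would give the condition $\delta_{6\rstar}<1/4$, which is a different theorem. This is not bookkeeping to tune afterwards: it is exactly the step that produces the stated constant.

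The repair is to split $E_0$ before invoking RIP. Note that $\HTst\perp E_1$, and each of the pairs $(\HTst,E_j)$ and $(E_1,E_j)$ is Frobenius-orthogonal with rank sum at most $4\rstar$. Hence
\[
\abs{\ip{\scrA(E_0)}{\scrA(E_j)}}\le\delta_{4\rstar}\bigl(\normF{\HTst}+\normF{E_1}\bigr)\normF{E_j}\le\sqrt{2}\,\delta_{4\rstar}\normF{E_0}\normF{E_j}.
\]
Combined with $\sum_{j\ge2}\normF{E_j}\le3\normF{\HTst}\le3\normF{E_0}$, this yields $\ip{\scrA(E_0)}{\scrA(\errmat)}\ge\bigl(1-(1+3\sqrt{2})\delta_{4\rstar}\bigr)\normF{E_0}^2$. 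Since $1/(1+3\sqrt{2})=(3\sqrt{2}-1)/17$, this is precisely the threshold in the statement.

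Alternatively, use blocks of rank $\rstar$ instead of $2\rstar$. Then $\rank(E_0)+\rank(E_j)\le4\rstar$ directly, the shelling bound becomes $3\sqrt{2}\normF{\HTst}$, and you get the same constant. Either way, the split that matters is $E_0=\HTst+E_1$, not a finer split of $\HTst$.

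With this fix, the rest of your argument goes through: the basic inequality gives $\norm{\scrA(\errmat)}^2\le3\sqrt{2\rstar}\lambda\normF{E_0}$, and $\normF{\errmat}\le4\normF{E_0}$.

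For the optimality claim, Fano with a low-rank packing works, but there is a lighter route. Restrict to matrices with a fixed $\rstar$-dimensional row space; this is a linear subspace of rank-$\le\rstar$ matrices on which the model is Gaussian linear regression. With noise variance $\sigma^2$, its minimax risk is $\sigma^2\tr\bigl((\scrA_1^*\scrA_1)^{-1}\bigr)\ge\sigma^2 d_1\rstar/(1+\delta_{\rstar})$, where $\scrA_1$ is the restricted operator. Compare this with $\opnorm{\scrA^*(\xi)}\lesssim\sigma\sqrt{d_1+d_2}$.
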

Subsequent work has improved the RIP constant threshold for different variants of the problem.
In particular, one has the same statistically optimal error bound via convex programming if
$\delta_{2\rstar} < 1/\sqrt{2}$,
and this threshold is sharp.
See \Cref{subsec:relcvx} for further discussion.

The error bound cannot be improved
for the LASSO algorithm due to the shrinkage effect of the nuclear norm. Indeed, consider the case where $\scrA$
is the identity on $\Mspace$, and thus \eqref{eq:lasso} is solved by singular
value soft thresholding.
In this case, even with $\xi = 0$,
$\normF{\Mhat - \Mst} = \sqrt{\rstar} \lambda$ for any $\lambda \leq \sigma_{\rstar}(\Mst)$
(here and throughout this paper, $\sigma_j(M)$ will denote the $j$th singular value of the matrix $M$, where the singular values are numbered in decreasing order).

While statistically optimal and solvable in polynomial time,
the convex formulation is still computationally challenging for large matrices due to the need to form and
manipulate large $d_{1}\times d_{2}$ matrices.
In practice, the matrix LASSO is commonly used with a nonconvex factored
formulation
\begin{equation}
	\min_{\substack{U\in\Lspace \\ V\in\Rspace}}~f_{\lambda}(U,V)
	\coloneqq\frac{1}{2}\norm{\scrA(UV\transpose)-b}^{2} + \lambda \frac{\normF{U}^2 + \normF{V}^2}{2} \label{eq:lasso_fact}
\end{equation}
for some search rank hyperparameter $r$.
This is connected to the convex formulation by the identity
\[
	\nucnorm{M} = \min_{M=UV\transpose}~\frac{\normF{U}^2 + \normF{V}^2}{2}.
\]
The factored problem has $r(d_1 + d_2)$ variables;
this is far smaller than the original $d_1 d_2$ variables if $r \ll \min~\{d_1, d_2\}$ (as will typically be the case).
However, the problem \eqref{eq:lasso_fact} is nonconvex.
In general, scalable local search algorithms such as gradient descent
cannot promise convergence to a global optimum,
only to a \emph{second-order critical point}, that is, a point $(U, V)$ with zero gradient and positive semidefinite Hessian:
\[
	\nabla f_{\lambda}(U,V) =0 \qquad \text{and} \qquad\nabla^{2}f_{\lambda}(U,V) \succeq 0.
\]
See, for example, \cite{Cartis2012,Lee2019b} for relevant algorithmic guarantees.
This raises the central question of this paper.

\paragraph{Question:} Under what RIP conditions do second-order points of the nonconvex matrix LASSO satisfy the same optimal error bound as the convex matrix LASSO?

This question becomes especially interesting once one accounts for \emph{rank overparametrization}.
If the true rank $\rstar$ were known, one would naturally set the search rank to $r=\rstar$.
In practice, however, $\rstar$ is often unknown, so, choosing $r$ conservatively, one often has $r>\rstar$.

Even when a good estimate of $\rstar$ is available,
practitioners often still choose a larger value of $r$.
This is motivated by the belief that overparametrization leads to a better optimization landscape; see, for example, \cite{Gunasekar2017,Li2018} for further discussion.
This has been backed up by some theoretical landscape guarantees under RIP \cite{Zhang2025a}; see \Cref{sec:relwork} for further discussion and references.
There are much stronger theoretical results on the benefits of overparametrization for problems with different structure such as linear semidefinite programs (see, e.g., \cite{Ling2026} and the references therein) and phase retrieval \cite{McRae2026a}.

However, overparametrization increases the model complexity and potentially leads to additional sensitivity to noise (``overfitting'').
Thus, as in the rank-unconstrained convex case, regularization, controlled
by the parameter $\lambda$, becomes essential to maintain optimal performance with respect to noise.

As we review in \Cref{sec:relwork}, the existing literature on low-rank matrix recovery via the nonconvex problem \eqref{eq:lasso_fact}
only covers only two partial regimes:
\begin{enumerate}
	\item results that optionally consider overparametrization ($r > \rstar$) but without regularization, that is, $\lambda=0$ (e.g., \cite{Zhang2025a}); and
	\item results with regularization ($\lambda > 0$) but which do not study the effect of overparametrization (e.g., \cite{McRae2026,Agterberg2025}).
\end{enumerate}
By contrast, the joint effect of overparametrization
and regularization has seen limited study.
The recent work \cite{Ouyang2025} gives sharp results from an optimization perspective (recovering a low-rank global optimum if one exists)
but does not apply directly under assumptions like those of \Cref{thm:lasso}.
This paper fills that gap and, in doing
so, establishes the full nonconvex counterpart of \Cref{thm:lasso} for matrix LASSO.

For low-rank matrix recovery from linear RIP measurements, our main result is the following
(for $x \in \R$, we write $x_+ \coloneqq \max~\{x, 0\}$):
\begin{theorem}
	\label{thm:matsens}
	For $r \geq \rstar \geq 1$,
	assume the model \eqref{eq:ms_model} for some rank-$\rstar$ $\Mst \in \Mspace$,
	and	suppose $\scrA \colon \Mspace \to \R^n$ has $(\genrk, \deltak)$-RIP with
	\begin{equation}
		\label{eq:deltacrit}
		\deltak < \deltac \coloneqq \frac{1}{1+\sqrt{\rstar/r}} \quad \text{for} \quad \genrk \geq r + \rstar.
	\end{equation}
	Then, for any $\lambda \geq 0$,
	any second-order critical point $(U, V)$ of \eqref{eq:lasso_fact}
	satisfies
	\begin{gather}
		\normF{UV\transpose-\Mst}\le\frac{6\sqrt{\rstar}\lambda+\sqrt{r+\rstar}(\opnorm{\scrA^{*}(\xi)}-\lambda)_{+}}{\deltac - \deltak}. \label{eq:ms_errbd}
	\end{gather}
	The dependence on $r$ and $\rstar$ is sharp in the thresholds for $\deltak$ and $\genrk$ in (\ref{eq:deltacrit}) and is also sharp in the error bound (\ref{eq:ms_errbd}).
	In particular, $(k, \delta_k)$-RIP for $k < r + \rstar$ does not, in general, suffice to ensure the absence of spurious local optima.
\end{theorem}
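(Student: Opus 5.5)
The plan is to work directly with the first- and second-order optimality conditions of \eqref{eq:lasso_fact}, written in terms of $M = UV\transpose$. Set $\errmat = M - \Mst$ and $S = \scrA^*(\scrA(\errmat) - \xi)$, so that the gradient of the data-fit term in $M$ is $S$. First-order stationarity gives $S V = -\lambda U$ and $S\transpose U = -\lambda V$. A standard consequence is $U\transpose U = V\transpose V$ (balancedness), so after a rotation we may write $U = Q\Sigma^{1/2}$ and $V = W\Sigma^{1/2}$ with $M = Q\Sigma W\transpose$. Stationarity then says that $-S$ acts as $\lambda$ on the column/row spaces of $M$, i.e.\ $\PQ S W = -\lambda Q$ and similarly on the right.

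Second-order criticality will be tested along directions $(U,V)\mapsto (U + t\,a u\transpose, V - t\,b u\transpose)$ and their combinations. Here $u$ is a unit vector in $\R^r$ chosen in (or nearly in) the kernel of $U$ when $\rank M < r$, or along the smallest singular direction of $\Sigma$ otherwise. This yields an inequality of the form
\[
2\,a\transpose(-S)b \;\le\; \lambda\,(\norm{a}^2+\norm{b}^2) + \norm{\scrA(a b\transpose)}^2 + (\text{terms involving } \sigma_r(M)),
\]
which, after optimizing over unit $a,b$, gives $\opnorm{S} \le \lambda + (1+\deltak)\,\sigma_r(M)$-type control. More usefully, summing these second-order tests over a basis of directions adapted to $\Mst$ (with $a,b$ ranging over the $\rstar$-dimensional singular spaces of the component of $\Mst$ outside $\range(M)$) produces a bound on $\ip{-S}{\PTp\Mst}$ that involves only $\rstar$ terms. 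That is the mechanism producing the ratio $\sqrt{\rstar/r}$.

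Next, I would combine the first-order identity $\ip{S}{\errmat} = \ip{S}{M} - \ip{S}{\Mst} = -\lambda\nucnorm{M} - \ip{S}{\Mst}$ with RIP. The matrix $\errmat$ has rank at most $r + \rstar \le \genrk$, so $\ip{S}{\errmat} \ge (1-\deltak)\normF{\errmat}^2 - \ip{\scrA^*\xi}{\errmat}$. I would split $\ip{S}{\Mst}$ into its tangent part (controlled by stationarity: $-S = \lambda$ on $\T$ up to an RIP cross-term of size $\deltak\normF{\errmat}$) and its normal part (controlled by the second-order inequality, which costs a factor like $\sqrt{\rstar/r}\,(1+\deltak)\normF{\errmat}$ after Cauchy--Schwarz over the $r$ eigen-directions of $\Sigma$, averaging so that $\sigma_r(M)\le \normF{M_{\T}}/\sqrt r$). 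Collecting terms gives
\[
\bigl(1 - \deltak - \sqrt{\rstar/r}\,(1+\deltak)\bigr)\,\normF{\errmat} \;\lesssim\; \sqrt{\rstar}\,\lambda + \sqrt{r+\rstar}\,(\opnorm{\scrA^*\xi} - \lambda)_+ .
\]
The left coefficient equals $(1+\sqrt{\rstar/r})(\deltac - \deltak)$. The noise term arises as follows: the part of $\scrA^*\xi$ exceeding $\lambda$ in operator norm pairs with the rank-$(r+\rstar)$ matrix $\errmat$, while the part below $\lambda$ is absorbed by the nuclear-norm terms, as in the convex analysis behind \Cref{thm:lasso}. Tracking constants should then give the factor $6$.

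For sharpness, I would give explicit constructions. Take a symmetric-like instance with $\rstar$-dimensional truth and an operator $\scrA$ that shrinks along a carefully chosen rank-$(r+\rstar)$ direction, making a point with $M \perp \Mst$ a second-order critical point exactly when $\deltak \ge \deltac$. This is a generalization of the known $\delta = 1/2$ counterexample at $r = \rstar$. To show that $k < r+\rstar$ fails, use an operator that has RIP on rank-$k$ matrices but kills a rank-$(r+\rstar)$ difference. The error-bound sharpness would follow from the identity case (soft thresholding, giving $\sqrt{\rstar}\lambda$) and from noise aligned with $r$ spare directions at $\lambda = 0$. The main obstacle I expect is the second step: extracting from the Hessian condition a bound on the normal component of $\Mst$ with exactly the constant $\sqrt{\rstar/r}$. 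The naive single-direction test loses a factor, so the averaging over all $r$ directions of $U$ must be done carefully.
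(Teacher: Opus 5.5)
\textbf{Gap 1: the Hessian term.} Your Hessian test can be made to give $\ip{S}{\Mp}\ge-(\lambda+L_2\sigma_r(M))\nucnorm{\Mp}$ with $L_2\le1+\deltak$, as in \Cref{lem:asym_Mperp}. For this you sum rank-one directions built from balanced factors of $\Mp$ against a smallest singular direction of $M$. Note that for $\lambda=0$ stationarity does not force $U\transpose U=V\transpose V$, so the test vectors must be adapted; the paper uses pseudo-inverses of $U,V$ applied to a balanced factorization of $M$. Stationarity also gives $\PT(S)=-\lambda Q_M$ exactly, with $Q_M$ the matrix sign of $M$, so no RIP cross-term arises on $\T$.

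The genuine gap is the step you flag, and it is not just a matter of constants. Averaging over the $r$ singular directions gives $\sigma_r(M)\le\normF{M}/\sqrt{r}$, hence $\sigma_r(M)\nucnorm{\Mp}\le\sqrt{\rstar/r}\,\normF{M}\normF{\errmat}$. But $\normF{M}$ is not controlled by $\normF{\errmat}$, so this cannot be absorbed into $(1-\deltak)\normF{\errmat}^2$. The error-based version $\sigma_r(M)\lesssim\normF{\HT}/\sqrt{r}$ is simply false: take $r=\rstar$ and $M\to\Mst$, so that $\HT\to0$ while $\sigma_r(M)\to\sigma_{\rstar}(\Mst)$. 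What is true is the product bound $\sigma_r(M)\nucnorm{\Mp}\le\frac{1}{2}\sqrt{\rstar/r}\,\normF{\errmat}^2$. It is a consequence of \Cref{lem:richard}, $\alpha^2+\frac{r}{\rstar}\beta^2\le1+(\beta-\alpha)_+^2$, whose proof rests on a generalized Eckart--Young lower bound for $\normF{\errmat}^2$ in terms of the singular values of $M$ and of $\Mp$ (with balanced factors of $\Mp$). Nothing in your proposal supplies this.

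\textbf{Gap 2: the threshold.} Even granting that bound, your scheme cannot reach $\deltac$. You lower-bound $\ip{\scrA^*\scrA(\errmat)}{\errmat}$ by $(1-\deltak)\normF{\errmat}^2$ and bound the Hessian term separately. This yields at best the coefficient $(1-\deltak)-\frac{1+\deltak}{2}\sqrt{\rstar/r}$, i.e.\ the threshold $\deltak<\frac{2\rho-1}{2\rho+1}$ with $\rho=\sqrt{r/\rstar}$. That is strictly below $\deltac=\frac{\rho}{\rho+1}$ ($1/3$ versus $1/2$ at $r=\rstar$).

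Your identity $1-\deltak-\sqrt{\rstar/r}(1+\deltak)=(1+\sqrt{\rstar/r})(\deltac-\deltak)$ is an algebra slip. The right side equals $1-(1+\sqrt{\rstar/r})\deltak$, while your left side is never positive when $r=\rstar$.

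The missing idea is reweighting, in three steps:
\begin{enumerate}
\item Pair the optimality conditions with $t_1\HT+t_2\HTp$, where $t_1\ge t_2\ge0$ and $\normF{t_1\HT+t_2\HTp}=\normF{\errmat}$. The Hessian term then carries only the weight $t_2$, and the nuclear-norm terms become $\lambda[t_1(\nucnorm{\Mst}-\nucnorm{M})-(t_1-t_2)\nucnorm{\Mp}]$ (\Cref{lem:landscape_basic}).
\item Lower-bound the now non-quadratic pairing via the polarization inequality of \Cref{lem:ip_bd}.
\item Solve the min--max over $(t_1,t_2)$ and the feasible $(\alpha,\beta)$ region (\Cref{lem:opt_messy}).
\end{enumerate}
Only this coupling yields $\mueff\ge\deltac-\deltak$. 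The bound $t_1\le1+\sqrt{2}$ it produces is where the constant $6\ge(1+\sqrt{2})^2$ comes from.

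\textbf{Sharpness in $k$.} An $\scrA$ that annihilates a rank-$(r+\rstar)$ difference only produces a second \emph{global} minimizer. The paper's \Cref{thm:overpbad} keeps $\scrA$ injective and $\Mst$ the unique convex optimum (with $\opnorm{\scrA^*(\xi)}=\lambda$). It then exhibits a genuine non-global local minimum.
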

The positive side of the result follows as a corollary of the more general \Cref{thm:noisy} in \Cref{sec:res_recov}, established under a nonlinear generalization of RIP.
The ``sharpness'' is shown in two brief examples below and in the fuller counterexamples of \Cref{sec:res_counter}.

\Cref{thm:matsens} and its extensions in \Cref{sec:res_gen} give conditions under which the nonconvex landscapes of \eqref{eq:lasso_fact} and other similar problems are guaranteed to be \emph{benign}:
that is, all second-order critical points (and, in particular, all local optima)
are ``good'' in the sense of being accurate estimates of $\Mst$ or of being globally optimal.

We now discuss in more detail the various aspects and implications of \Cref{thm:matsens}.

\paragraph{Error bound:}
Our \Cref{thm:matsens} unifies, sharpens, and interpolates the existing recovery
guarantees in the literature. The recovery error bound \eqref{eq:ms_errbd} has two terms:
\[
	\normF{UV\transpose-\Mst} \quad \lesssim \qquad  \underbrace{\sqrt{\rstar}\lambda}_{\mathclap{\text{shrinkage error}}}\qquad + \qquad \underbrace{\sqrt{r+\rstar}(\opnorm{\scrA^{*}(\xi)}-\lambda)_{+}}_{\mathclap{\text{excess noise error}}}.
\]
The first term is the classical error of the
convex matrix LASSO (see \Cref{thm:lasso}) due to the shrinkage effect of nuclear norm regularization, while
the second term quantifies the additional error due to noise when the regularization is insufficient.
Several consequences are immediate: 
\begin{itemize}
	\item If $\lambda \geq \opnorm{\scrA^{*}(\xi)}$, the ``excess noise'' term vanishes,
	and every second-order point achieves, independently of the search rank $r$, the same error rate as in the convex case (\Cref{thm:lasso}).
	As in the convex case, with noise-calibrated regularization $\lambda \approx \opnorm{\scrA^*(\xi)}$,
	this is statistically optimal,
	so rank overparametrization carries no statistical penalty in terms of recovery error.
	\item If $\lambda=0$, the shrinkage term vanishes, and the bound reduces to 
	\[
		\normF{UV\transpose-\Mst} \lesssim \sqrt{r+\rstar} \opnorm{\scrA^{*}(\xi)} \approx \sqrt{r} \opnorm{\scrA^{*}(\xi)}.
	\]
	We then obtain an improved version of the result of \cite{Zhang2025a} (see \Cref{subsec:relrip} for further discussion and comparison).
	The dependence on $r$ is unavoidable (see \Cref{ex:error} below).
	
	\item When $0<\lambda<\opnorm{\scrA^{*}(\xi)}$, the bound interpolates between the two previous regimes.
\end{itemize}
The resulting picture is simple.
Overparametrization can indeed increase sensitivity to noise
but only when we have insufficiently regularized the problem (that is, $\lambda < \opnorm{\scrA^*(\xi)}$).
Statistically optimal recovery can be obtained either
\begin{enumerate}
	\item by using a not-too-overparametrized search rank $r=O(\rstar)$
	or 
	\item by, as with the convex LASSO, using an appropriately tuned regularization $\lambda \approx \opnorm{\scrA^{*}(\xi)}$.
\end{enumerate}

In the regime $\lambda \leq \opnorm{\scrA^*(\xi)}$ (for the case $\lambda \geq \opnorm{\scrA^*(\xi)}$, see the example given after \Cref{thm:lasso}),
the error bound is indeed optimal within a multiplicative constant, as shown by the following example:
\begin{example}
	\label{ex:error}
	Take $\scrA$ to be the identity on $\Mspace$.
	For $r \geq \rstar \geq 1$ and $d_1, d_2 \geq r + \rstar$,
	let $\Mtl, \Mst \in \Mspace$ be matrices with orthogonal row and column spaces satisfying $\rank(\Mtl) = r$, $\rank(\Mst) = \rstar$, and $\sigma_1(\Mtl) = \sigma_r(\Mtl) = \sigma_1(\Mst) = \sigma_{\rstar}(\Mst) = 1$.

	Set $b = \scrA(\Mtl) = \Mtl$;
	then $\opnorm{\scrA^*(\xi)} = \opnorm{\Mtl - \Mst} = 1$,
	and, for $\lambda \geq 0$, the LASSO problem
	\[
		\min_{M \in \Mspace}~\frac{1}{2} \norm{\scrA(M) - b}^2 + \lambda \nucnorm{M} = \min_{M \in \Mspace}~\frac{1}{2} \normF{M - \Mtl}^2 + \lambda \nucnorm{M},
	\]
	has the unique solution $\Mhat = (1 - \lambda)_+ \Mtl$ (as all nonzero singular values of $\Mtl$ are 1).
	As $\rank(\Mhat) \leq r$, all global optima $(U, V)$ of the nonconvex problem \eqref{eq:lasso_fact} must satisfy $U V\transpose = \Mhat$.
	
	We can directly calculate, for $\lambda \leq \opnorm{\scrA^*(\xi)} = 1$,
	\begin{align*}
			\normF{\Mhat - \Mst}
			&= \sqrt{ \rstar + (1 - \lambda)_+^2 r} \\
			&\geq \frac{1}{\sqrt{2}}( \sqrt{\rstar} + \sqrt{r}(1 - \lambda)_+ ) \\
			&\geq \frac{1}{\sqrt{2}}( \sqrt{\rstar} \lambda + \sqrt{r + \rstar}(1 - \lambda)_+ ) \\
			&= \frac{1}{\sqrt{2}} \parens{ \sqrt{\rstar} \lambda + \sqrt{r + \rstar} (\opnorm{\scrA^*(\xi)} - \lambda)_+ }.
		\end{align*}
	By rescaling, we can extend this example to any value of $\opnorm{\scrA^*(\xi)} \geq 0$.
\end{example}

\paragraph{RIP condition:}
\Cref{thm:matsens} also fully clarifies how the landscape evolves under rank overparametrization.
By the definition of RIP, the constants $\deltak$ are monotone in $\genrk$,
that is, $\delta_{1} \le \delta_{2} \le \delta_{3} \le \cdots$.
The sharp dependence on both $\genrk$ and $\deltak$ shows that
the potential benefits of overparametrization hinge not merely on the size of a single RIP constant
but also on the profile of $\genrk \mapsto \deltak$.
If this profile is flat, for example if $\delta_{2\rstar} = \delta_{2\rstar+1} = \delta_{2\rstar+2} = \cdots$,
then increasing the search rank $r$ does indeed improve the landscape.
This is consistent with popular belief and practice (see discussion above).

If, on the other hand, $\deltak$
grows too quickly with $\genrk$, then increasing the search rank can instead
worsen the landscape.
The following concrete counterexample (for the case $\lambda = 0$), which expands on \cite[Ex.~4.4]{Zhang2025a},
exploits exactly this mechanism, producing spurious local optima for large search rank $r$ even when the landscape is benign for smaller $r$.
More general counterexamples, which handle the case $\lambda > 0$,
are given in \Cref{sec:res_counter}.
\begin{example}
	\label{ex:overpbad}
	\newcommand{\Uspur}{U_{\mathrm{sp}}}
	\newcommand{\Vspur}{V_{\mathrm{sp}}}
	\newcommand{\Ugt}{P}
	\newcommand{\Vgt}{Q}
	\newcommand{\Uorth}{P_\perp}
	\newcommand{\Vorth}{Q_\perp}
	\newcommand{\cspur}{c_{\mathrm{sp}}}
	\newcommand{\rspur}{r_{\mathrm{sp}}}
	\newcommand{\kapcr}{\kappa_{\mathrm{crit}}}
	\newcommand{\kappak}{\kappa_\genrk}
	\newcommand{\kapspur}{\kappa_{\mathrm{sp}}}
	\newcommand{\Lspspace}{\R^{d_1 \times \rspur}}
	\newcommand{\Rspspace}{\R^{d_2 \times \rspur}}
	
	Instead of the RIP constant $\deltak$, it will be more convenient (to avoid scaling issues) to consider
	the restricted \emph{condition number}
	\[
		\kappak \coloneqq \max~\frac{\norm{\scrA(\genmat_1)}^2}{\norm{\scrA(\genmat_2)}^2} \stquad \normF{\genmat_1} = \normF{\genmat_2} = 1,\ \rank(\genmat_1), \rank(\genmat_2) \leq \genrk.
	\]
	\Cref{thm:matsens} then applies (potentially after rescaling $\scrA$) if and only if
	\begin{equation}
		\label{eq:kappacrit}
		\kappak < \kapcr(r, \rstar) \coloneqq \frac{1 + \deltac(r, \rstar)}{1 - \deltac(r, \rstar)}  = 1 + 2 \sqrt{\frac{r}{\rstar}} \qquad \text{for} \qquad \genrk = r + \rstar.
	\end{equation}
	Indeed, our more general results in \Cref{sec:res_gen} will resemble this (see, in particular, \Cref{rmk:Lequal}).
	
	Now, we construct the example.
	For any arbitrary $\kapspur > 1$, we define a linear operator $\scrA$ with condition number at most $\kapspur$,
	that is, $\kappak \leq \kapspur$ for all $\genrk \geq 1$.
	We achieve this by requiring
	\[
		\norm{\scrA(\genmat)}^2 = \normF{\genmat}^2 - (1 - \kapspur^{-1}) \ip{G}{\genmat}^2 \quad \text{for} \quad \genmat \in \Mspace
	\]
	for some $G \in \Mspace$ that we will choose such that $\normF{G} = 1$.
	For example, we can choose $\scrA \colon \Mspace \to \Mspace$ to be a rank-1 perturbation of the identity operator on $\Mspace$:
	\[
		\scrA(\genmat) = \genmat - (1 - \kapspur^{-1/2}) \ip{G}{\genmat}G.
	\]
	Fix $\rspur \geq \rstar \geq 1$ and $d_1, d_2 \geq \rspur + \rstar$.
	Choose $\Ugt \in \Lstspace$, $\Uorth \in \Lspspace$, $\Vgt \in \Rstspace$, and $\Vorth \in \Rspspace$
	such that the matrices $[\Ugt\ \Uorth] \in \R^{d_1 \times (\rspur + \rstar)}$ and $[\Vgt\ \Vorth] \in \R^{d_2 \times (\rspur + \rstar)}$ both have orthonormal columns.
	We will choose
	\[
		\Mst \coloneqq \Ugt \Vgt\transpose \qquad \text{and} \qquad G \coloneqq \frac{1}{2 \sqrt{\rstar}} \Ugt \Vgt\transpose - \frac{1}{2 \sqrt{\rspur}} \Uorth \Vorth\transpose.
	\]
	Crucially, this construction ensures $\kappak = \kapspur$ for all $\genrk \ge \rspur + \rstar$.
	
	Next, consider the (unregularized) nonconvex landscape of
	\[
		f(U, V) \coloneqq \frac{1}{2} \norm{\scrA(U V\transpose - \Mst)}^2 \qquad \text{for} \qquad U \in \Lspspace,\ V \in \Rspspace.
	\]
	As $\scrA$ is injective, any global optimum must satisfy $U V\transpose = \Mst$.
	However, direct calculation (or numerical verification) reveals that the point $(\Uspur, \Vspur)$, defined by
	\[
		\Uspur = \cspur \Uorth, \qquad \Vspur = \cspur \Vorth, \qquad \cspur^2 = \frac{\kapspur - 1}{\kapspur + 1} \sqrt{\frac{\rstar}{\rspur}}
	\]
	is a first-order critical point, that is, $\nabla f(\Uspur, \Vspur) = 0$.
	Furthermore, the Hessian is positive semidefinite (i.e., $\nabla^2 f(\Uspur, \Vspur) \succeq 0$)
	if and only if $\kappa_{\rspur + \rstar} = \kapspur \geq \kapcr(\rspur, \rstar)$,
	where $\kapcr$ is defined in \eqref{eq:kappacrit}.
	In fact, when $\kapspur > \kapcr(\rspur, \rstar)$ strictly, $(\Uspur, \Vspur)$ is a spurious local optimum;
	in this case, one can verify numerically that local optimization algorithms with random initialization will occasionally converge to this point (modulo problem symmetries) and thereby fail to find a global optimum.
	The construction up to this point mirrors that of \cite[Ex.~4.4]{Zhang2025a}
	and shows that the threshold $\kapcr$ in \eqref{eq:kappacrit} and, equivalently, the threshold $\deltac$ for $\delta_k$ in \Cref{thm:matsens} are tight.
	
	Thus far, however, we have only considered the restricted condition number $\kappak$ for $\genrk \geq \rspur + \rstar$.
	What is the condition number for smaller ranks $\genrk < \rspur + \rstar$?
	Critically, for any $r \in [\rstar, \rspur]$, we have
	\[
		\max_{\substack{\genmat \in \Mspace \\ \normF{\genmat} = 1 \\ \rank(\genmat) \leq r + \rstar}}~\ip{G}{\genmat}^2
		= \sigma_1^2(G) + \cdots + \sigma_{r + \rstar}^2(G)
		= \frac{1 + r/\rspur}{2},
	\]
	and, therefore,
	\[
		\kappa_{r + \rstar} = \frac{1}{1 - (1 - \kapspur^{-1}) \frac{1 + r/\rspur}{2} }.
	\]
	Hence we conclude that $\kappa_{r + \rstar} < \kapspur$ for all $r < \rspur$.
	Notably, regardless of how large $\kapspur$ is,
	if $\rstar \leq r < 3 \rspur$,
	then $\kappa_{r + \rstar} < 3 \leq \kapcr(r, \rstar)$.
	In such cases, \Cref{thm:matsens} guarantees a benign landscape with search rank $r$.
	
	In particular, for the exactly parametrized case $r = \rstar$, we have $\kappa_{2\rstar} < 3$, which, up to rescaling, is equivalent to $\delta_{2 \rstar} < 1/2$.
	In this case, convex approaches (see \Cref{subsec:relcvx})
	and the nonconvex problem \eqref{eq:lasso_fact} with $r = \rstar$ (by \Cref{thm:matsens})
	are guaranteed to succeed,
	but an overparametrized version (with $r = \rspur > \rstar$) of the nonconvex problem has a spurious local minimum.
\end{example}
Our more general counterexamples in \Cref{sec:res_counter} below have two additional refinements:
\begin{itemize}
	\item We can adjust the constants in the construction of $\scrA$ for various purposes.
	In particular, we can find counterexamples with arbitrarily small $\delta_{2\rstar} > 0$ yet a spurious local optimum for sufficiently large $r$.
	\item For the case $\lambda > 0$, we carefully choose $b = \scrA(\Mst) + \xi$ such that, among other things, $\opnorm{\scrA^*(\xi)} = \lambda$, and $\Mst$ remains a global optimum of the convex problem \eqref{eq:lasso}.
\end{itemize}

\paragraph{Empirical results for matrix sensing with Gaussian measurements:}
\begin{figure}[t]
	\centering
	\includegraphics[width=0.95\linewidth]{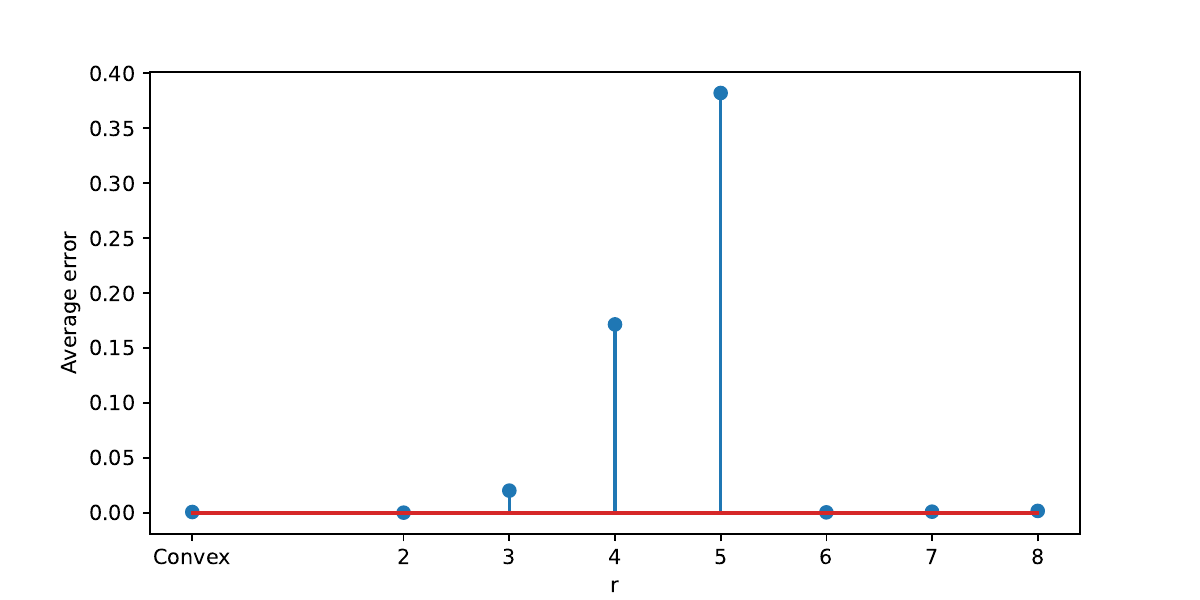}
	\caption{%
		Average error of $\normF{U V\transpose - \Mst}$ for local optimization of \eqref{eq:lasso_fact}
		with Gaussian measurements, different values of the search rank parameter $r$, and random initialization of $(U, V)$.
		We used $d_1 = 50$, $d_2 = 51$, $\rstar = 2$, and $n = \ceil{2.35 \rstar (d_1 + d_2)} = 475$ Gaussian measurements.
		All nonzero singular values of $\Mst$ are $1$.
		We chose $\xi = 0$ and $\lambda = 0.0001$ (nonzero to ensure a unique global optimum),
		and $\scrA$ is scaled such that $\E \norm{\scrA(\genmat)}^2 = \normF{\genmat}^2$ for all $\genmat \in \Mspace$.
		We use the Newton conjugate gradient trust-region algorithm \cite{Nocedal2006} as implemented in SciPy.
		The plotted errors are an average across $50$ random draws of $\scrA$.}
	\label{fig:gauss_exps}
\end{figure}
One might reasonably wonder whether the carefully engineered adversarial counterexamples we have just described are really representative of what happens in statistical settings.
Surprisingly, even with Gaussian measurements (a popular idealized statistical model), we indeed empirically observe problem instances for which increasing the search rank can worsen the landscape.
In the experiments described and visualized in \Cref{fig:gauss_exps}, one observes a benign landscape for $r = \rstar$,
but spurious local minima appear for slightly larger values of $r$.
As $r$ continues to increase, the landscape once again becomes benign.
It would be interesting to explore further this phenomenon,
but this is beyond the scope of the present paper, and we leave it to future work.

\paragraph{Computational complexity of low-rank matrix recovery:}
The negative side of our results raises a deep question:
is there a fundemental difference between the statistical recovery capability of convex programming
and that of more computationally scalable nonconvex approaches?
For example, for a problem with $1/2 < \delta_{2\rstar} < 1/\sqrt{2}$,
we know that convex programming can solve it (see \Cref{subsec:relcvx}),
but the nonconvex problem \eqref{eq:lasso_fact} may have a spurious local optimum with $r = \rstar$.
A standard response based on prior work such as \cite{Zhang2025a} would be to set $r > \rstar$ to compensate.
Indeed, if $\genrk \mapsto \deltak$ is sufficiently constant for $\genrk > 2 \rstar$, this will succeed
(for example, if $\delta_{7\rstar} \leq 1/\sqrt{2}$, \Cref{thm:matsens} guarantees a benign landscape for $r = 6 \rstar$).
However, our counterexamples call this assumption and hence the entire strategy of overparametrization into question.

Further work is thus needed to understand fully this possible gap between convex and nonconvex approaches.
This includes both (1) refining and extending the class of counterexamples developed in the present work and (2) exploring further (empirically and theoretically) the behavior under Gaussian or related statistical models as suggested by \Cref{fig:gauss_exps}.

\section{Related work}

\label{sec:relwork} 

\subsection{Convex recovery guarantees}

\label{subsec:relcvx}

The sharp statistical limits of RIP matrix sensing are currently
best understood through the \emph{matrix Dantzig selector} introduced by \cite{Candes2011b}:
\[
	\min_{M\in\Mspace}~\nucnorm{M} \stqquad \opnorm{\scrA^*(\scrA(M) - b)} \leq \lambda.
\]
For this estimator,~\cite{Cai2014} established
the sharp RIP threshold 
\[
	\deltak <\delta_* \coloneqq
	\sqrt{1-\frac{\rstar}{\genrk}}, \qquad \genrk \geq \frac{4}{3}\rstar
\]
for statistically optimal recovery $\normF{\Mhat-\Mst} \lesssim \sqrt{\rstar} \lambda$ when $\lambda \geq \opnorm{\scrA^*(\xi)}$ (compare \Cref{thm:lasso}).
In particular, we have $\delta_* = 1/\sqrt{2}$ if $\genrk = 2 \rstar$.

We study the unconstrained matrix LASSO \eqref{eq:lasso} through
its smooth nonconvex low-rank factorization \eqref{eq:lasso_fact}.
As the two estimators are structurally parallel
(see \cite{Candes2011b} and also \cite{Bickel2009} for the sparse vector case),
it is natural to expect that the sharp threshold $\deltak<\delta_*$ and optimal error bound extend to the matrix LASSO.
The work \cite{Wang2021a} indeed obtained the threshold,
but, to the best of our knowledge, there is currently no result in the literature for the formulation \eqref{eq:lasso} with this sharp RIP threshold \emph{and} a statistically optimal error bound.

\subsection{Generic nonconvex guarantees based on dimension}

\label{subsec:relover}
\newcommand{\rc}{r_{\mathrm{crit}}}

A trivial way to transfer guarantees from the convex matrix LASSO \eqref{eq:lasso} to its nonconvex factored form \eqref{eq:lasso_fact}
is to choose $r \geq \min~\{ d_1, d_2 \}$.
The problems are then equivalent as the implied
constraint $\rank(M) \leq r$ becomes vacuous.
Furthermore, the factored problem \eqref{eq:lasso_fact} has a benign landscape
in that every second-order critical point yields a first-order critical point (and thus by convexity a global optimum) of \eqref{eq:lasso}.
This follows from \cite[Thm.~2.3(a)]{Ha2020}
for $\lambda=0$ and \cite[Lem.~4]{McRae2026} for $\lambda > 0$,
noting that the rank constraint in those results is also vacuous.
However, taking $r \geq \min~\{ d_1, d_2 \}$ defeats the original dimension-saving purpose of the factorization.

Empirically, one often succeeds in solving \eqref{eq:lasso} with a search rank $r$ only modestly larger than $\rstar$~\cite{Srebro2004a,Srebro2004,Hastie2015a}.
One way to explain this for $r \ll \min~\{ d_1, d_2 \}$
uses smoothed analysis and dimension-counting.
This argument was first introduced in \cite{Boumal2016a} for factorizations of linear semidefinite programs.
For the factored objective $f_{\lambda}(U,V)$ in (\ref{eq:lasso_fact}),
\cite{Bhojanapalli2018} proved that if 
\[
	r > \rc \coloneqq \floor*{ \frac{\sqrt{8n+1}-1}{2} } \approx \sqrt{2 n},
\]
then, after adding a linear perturbation to obtain
\[
	\ftl_\lambda(U,V) \coloneqq f_\lambda(U, V) +
	\ip*{\Xi}{ \begin{bmatrix*} U U\transpose & U V\transpose \\ V U\transpose & V V\transpose  \end{bmatrix*}},
\]
for almost every symmetric $\Xi \in \R^{(d_1 + d_2) \times (d_1 + d_2)}$,
every second-order critical point of $\ftl_\lambda$ is globally
optimal.
Since, often, $n = O(d_1 + d_2)$, the threshold $\rc \approx \sqrt{2n}$
can be far smaller than $\min~\{d_1, d_2\}$.
The guarantee is also general, not relying on any additional structure of the measurement map $\scrA$.
However, this result has some limitations.
First, $\rc$ can still be considerably larger
than the true rank $\rstar$.
Second, the linear perturbation is artificial and not typically used in practice.
However, \cite{Bhojanapalli2018} showed that this perturbation is not optional;
without it, spurious second-order critical points can persist even for $r > \rc$.

\subsection{Overparametrized nonconvex guarantees based on RIP}

\label{subsec:relrip}
To obtain stronger guarantees, we must impose additional structure
on the measurement map $\scrA$.
We focus, in this paper, on the restricted isometry property (RIP) defined in \eqref{eq:rip}.
Indeed, the strongest and most comparable existing landscape results we are aware of make this (or a similar) assumption.
There has been some work on nonconvex approaches to other problems like matrix completion and phase retrieval (where, typically, RIP does not hold),
but the theoretical results are weaker or difficult to compare, so we do not review them here.

More broadly, two paradigms have emerged in the theoretical literature on nonconvex optimization under RIP.
The \emph{algorithmic} paradigm studies a particular algorithm.
These results have the strength of providing explicit convergence rates.
On the other hand, their assumptions are often suboptimal in terms of RIP constant (or sample complexity),
and the algorithms, to aid theoretical analysis, are often simplified and/or contrived compared to what one would use in practice.
By contrast, the \emph{landscape} paradigm identifies conditions under which the entire nonconvex optimization landscape is benign,
and hence they are applicable with general algorithms.
Our analysis is part of this second paradigm.

Within the algorithmic paradigm,
one popular class of algorithm consists of a sufficiently accurate initialization (e.g., by a spectral method) followed by a specific local descent algorithm such as gradient descent.
However, theoretical analyses of these algorithms often assume that we know the correct search rank exactly,
and the results are quite sensitive to the condition number of the optimum.
The technical reason for this is that, in the overparametrized regime, the (local) \PLlong{} inequality (essentially local strong convexity modulo problem symmetries) fails.
Some attempts have been made recently to overcome this with carefully-designed algorithms: see, for example, \cite{Diaz2025} and the references therein.
For further information and references on this class of results, see the overview \cite{Chi2019} and the more recent work \cite{Stoeger2024}.

More relevant to our study of overparametrization is another class of algorithm which has recently seen rising interest:
that of a small, random initialization followed by local descent for a carefully tuned number of iterations \cite{Stoeger2021,Xu2023a,Ma2023c,Soltanolkotabi2025}.
Interestingly, these results are, in some respects, stronger than our (sharp) global landscape results.
Even for search rank $r \gg \rstar$, they only require $(\genrk, \deltak)$-RIP for $\genrk \approx \rstar$.
Hence it is plausible that there are problem instances with spurious local optima when $r \gg \rstar$
but for which the just-cited results guarantee that small random initialization and local descent avoid these spurious optima.
On the other hand, the algorithms (small initialization and early stopping) are somewhat contrived in the matrix recovery setting,
and the required RIP constants are severely suboptimal.

We now turn our attention to the literature on nonconvex landscapes.
Most existing landscape results in our setting consider the unregularized case $\lambda = 0$ in \eqref{eq:lasso_fact},
that is, they study the nonconvex landscape of
\[
	f_0(U, V)\coloneqq \frac{1}{2}\norm{ \scrA(UV\transpose)-b}^2.
\]
The next subsection will discuss the results we are aware of in the case $\lambda > 0$.
Furthermore, the earliest landscape guarantees considered only symmetric positive semidefinite (PSD) factorizations of the form $M = UU\transpose$ \cite{Bhojanapalli2016}.

The first work we are aware of showing the landscape benefits (in terms of RIP constant) of overparametrization was an early (2021) arXiv version of \cite{Zhang2025a},
which established, in the noiseless symmetric PSD case, the optimal RIP threshold $\deltac = (1 + \sqrt{\rstar/r})^{-1}$ from \Cref{thm:matsens};
this result also appeared in \cite{Zhang2025c}.
This was extended to include some robustness to noise by \cite{Ma2023b}.
The final version of \cite{Zhang2025a} gave more refined error bounds with noise and approximate criticality.

The extension to asymmetric factorizations of the form $M = U V\transpose$ is more delicate due to potential imbalance between the factors:
for any invertible $G\in\R^{r\times r}$, $f_0(U, V) = f_0(UG, V(G^{-1})\transpose)$.
A common remedy is to add a balancing
regularizer, that is, we minimize instead
\[
	g_\gamma(U, V)\coloneqq f_0(U, V) + \gamma\normF{ U\transpose U-V\transpose V}^2
\]
for some $\gamma > 0$ \cite{Tu2016}.
Landscape results with this regularizer (e.g., \cite{Park2017}) typically then convert the problem to one with a (larger) symmetric factorization. 
This was extended to general convex objective functions (like in \Cref{sec:res_gen} of the present paper) by \cite{Zhu2018a,Zhu2021a}.
None of these works considered the effects of overparametrization.

In the overparametrized case, using this balancing regularizer, the work \cite{Zhang2025a} (in its final version) showed that $g_\gamma$ has a benign landscape when $2 \deltak < \deltac = (1 + \sqrt{\rstar/r})^{-1}$ for $\genrk \geq r + \rstar$.
The additional factor of two comes from translating the problem into the symmetric case but is not tight.
Our results in the present paper improve on this by recovering the optimal threshold $\deltak < \deltac$ (this was already partially done by \cite{Ouyang2025} in the case $\lambda > 0$---see \Cref{subsec:relreg} below).
Achieving this requires a substantial refinement of the proof
strategy.

While convenient for analysis, the balancing regularizer is artificial;
it increases computational cost, appears to have little practical
effect, and is therefore often omitted in practice (see, e.g., \cite[Rem.~7]{Zhu2018a}).
Furthermore, the landscape guarantees require precise tuning of the parameter $\gamma > 0$.
The only global landscape results we are aware of for the factored problem without regularization are \cite{Ha2020,Kim2025a} which require, in our notation, $\delta_{2r} < 1/3$.
Our present paper more generally confirms that regularization is unnecessary for sharp landscape guarantees.

\subsection{Nonconvex guarantees with nuclear-norm regularization}

\label{subsec:relreg}
The factored version of the nuclear-norm penalty in \eqref{eq:lasso_fact}
has long been used in practical matrix optimization
for promoting low-rank solutions \cite{Srebro2004,Hastie2015a}.
However, given the nonconvexity of the problem, there is relatively little theory supporting its effectiveness.
With a symmetric PSD factorization $M=UU\transpose$,
it is straightforward to apply prior benign landscape
theory because the nuclear norm reduces to the linear trace:
$\nucnorm{M} = \tr(M)$.
In the general asymmetric case, analysis is complicated by the fact that the nuclear norm is nonlinear and nonsmooth.

To the best of our knowledge,
all existing work on the nonconvex landscape of \eqref{eq:lasso_fact} with $\lambda > 0$ provides guarantees of \emph{global optimality} assuming that there is a global optimum $\Mopt$ to \eqref{eq:lasso} with $\rank(\Mopt) \leq r$
(hence these results are more comparable to our \Cref{thm:global_main} below than to \Cref{thm:matsens}).
Our results, more generally, provide tight error bounds for a low-rank approximate minimizer as in \Cref{thm:matsens}.
Each of the results cited here also applies to general convex functions as in \Cref{sec:res_gen} below,
but, for clarity, we state them in terms of RIP constants.

The first work we are aware of is \cite{Li2019},
which proved a benign landscape when $\delta_{4r} \leq 1/5$
and, critically, showed that the nuclear-norm penalty ensures balance between the two factors.
The work \cite{McRae2026} improved this to $\delta_{2r} < 1/3$ (this was also later independently done by \cite{Kim2025a})
and, further, connected this to the statistical model \eqref{eq:ms_model},
giving conditions, comparable to those of \Cref{thm:lasso}, under which the problem \eqref{eq:lasso} is guaranteed to have a low-rank solution.
The work \cite{Agterberg2025} gives similar guarantees and additional asymptotic approximations under more restrictive statistical and structural assumptions.

Finally, the recent work \cite{Ouyang2025} achieved the tight benign landscape threshold $\delta_{r + \rstar} < \deltac = (1 + \sqrt{\rstar/r})^{-1}$ assuming there is a global optimum $\Mopt$ with $\rank(\Mopt) = \rstar$.
Their main result is identical to our \Cref{thm:global_main} in the case $\lambda > 0$ and (in our notation) $L = L_2$.

\section{Full results for general convex objective}
\label{sec:res_gen}

We will, more generally, consider nuclear-norm regularized problems of the form
\begin{equation}
	\label{eq:cvx_asym}
	\min_{M \in \Mspace}~\phi(M) + \lambda \nucnorm{M}
\end{equation}
where $\lambda \geq 0$, and $\phi \colon \Mspace \to \R$ can now be any convex and twice-differentiable function.
The nonconvex factored formulation is, for rank hyperparameter $r$,
\begin{equation}
	\label{eq:ncvx_asym}
	\min_{\substack{U \in \Lspace \\ V \in \Rspace}}~f_\lambda(U, V), \qquad \text{where} \qquad f_\lambda(U, V) = \phi(U V\transpose) + \lambda \frac{\normF{U}^2 + \normF{V}^2}{2}.
\end{equation}

We will also consider matrix optimization problems over  $d \times d$ symmetric positive semidefinite (PSD) matrices:
\begin{equation}
	\label{eq:cvx_sym}
	\min_{M \succeq 0}~\phi(M)+ \lambda \tr M.
\end{equation}
Recall that the nuclear norm of a PSD matrix is simply its trace.
The function $\phi$ now only needs to be defined on the set of PSD matrices in $\Mspaced$,
which we denote by $\symms_d^+$ (we will denote by $\symms_d$ the full set of symmetric matrices in $\Mspaced$).
The symmetric Burer-Monteiro factored nonconvex problem is then, for rank hyperparameter $r$,
\begin{equation}
	\label{eq:ncvx_sym}
	\min_{U \in \Uspace}~f_\lambda(U), \qquad \text{where} \qquad f_\lambda(U) = \phi(U U\transpose) + \lambda \normF{U}^2.
\end{equation}
As discussed in \Cref{subsec:relrip},
most state-of-the-art nonconvex landscape results have in fact primarily studied this symmetric PSD problem.

We recover the matrix LASSO discussed in \Cref{sec:intro} by taking quadratic $\phi$ of the form
\begin{equation}
	\label{eq:quadraticphi}
	\phi(M) = \frac{1}{2} \norm{\scrA(M) - b}^2,
\end{equation}
where, for some integer $n \geq 1$,
$\scrA$ is a linear operator from $\Mspace$ (asymmetric case) or $\symms_d$ (symmetric case) to $\R^n$,
and $b \in \R^n$.

\subsection{Assumption: restricted strong convexity and smoothness}
\label{sec:rscs}
For quadratic $\phi$ of the form \eqref{eq:quadraticphi},
we assumed, in \Cref{sec:intro}, that $\scrA$ had the restricted isometry property \eqref{eq:rip}.
We need to extend this notion to more general convex $\phi$.
Many prior works have made similar assumptions (e.g., \cite{Ha2020,Zhu2018a,Li2019,Bi2021}).

For integers $\genrk, \genrkk \geq 1$ and $L \geq \mu \geq 0$,
we say $\phi$ on $\Mspace$ (respectively, $\symms_d^+$) has $(\genrk, \genrkk, \mu, L)$-restricted strong convexity and smoothness if it is twice differentiable and, for all $M, \genmat \in \Mspace$ (resp. $M \in \symms_d^+$, $\genmat \in \symms_d$) with $\rank(M) \leq \genrkk$, $\rank(\genmat) \leq \genrk$,
the Hessian quadratic form of $\phi$ at $M$ applied to $\genmat$ satisfies
\[
	\mu \normF{\genmat}^2 \leq \nabla^2 \phi(M)[\genmat, \genmat] \leq L \normF{\genmat}^2.
\]
We will denote by $\mu^{(\genrkk)}_{\genrk}(\phi)$ and $L^{(\genrkk)}_{\genrk}(\phi)$ the optimal values of $\mu$ and $L$ for which this holds.
In the case $\genrk = \genrkk$,
we write $\mu_{\genrk}(\phi) \coloneqq \mu^{(\genrk)}_{\genrk}(\phi)$ and $L_{\genrk}(\phi) \coloneqq L^{(\genrk)}_{\genrk}(\phi)$.

In the case of quadratic $\phi$ of the form \eqref{eq:quadraticphi},
the integer $\genrkk$ is irrelevant (see the Hessian expression in \eqref{eq:quadphi_hess} below),
and the condition is equivalent to restricted isometry:
for all $\genmat$ of rank at most $\genrk$, we have
\[
	\mu \normF{\genmat}^2 \leq \norm{\scrA(\genmat)}^2 \leq L \normF{\genmat}^2,
\]
and, setting $\delta = \frac{L - \mu}{L + \mu}$,
we have
\[
	(1 - \delta) \normF{\genmat}^2 \leq \frac{2}{L + \mu} \norm{\scrA(\genmat)}^2 \leq (1 + \delta) \normF{\genmat}^2,
\]
which is precisely \eqref{eq:rip} up to rescaling of $\scrA$.

For such quadratic $\phi$, we will sometimes write $\mu_\genrk(\scrA) = \mu_\genrk(\phi)$, $L_\genrk(\scrA) = L_\genrk(\phi)$.

\subsection{Approximate recovery}
\label{sec:res_recov}
Our first result for general $\phi$ generalizes \Cref{thm:matsens},
which was our result for approximate recovery via the matrix LASSO.

We must consider how to generalize the assumptions of \Cref{thm:matsens}.
We have already discussed above how to generalize RIP to restricted strong convexity and smoothness.
The other quantity we must generalize is the ``noise'' represented by the vector $\xi$ in the model \eqref{eq:ms_model}.
In \Cref{thm:matsens} and many other results on matrix sensing (such as \Cref{thm:lasso}), we quantify the noise via $\scrA^*(\xi)$.
Note that, if $\phi$ is the least-squares loss \eqref{eq:quadraticphi},
we have, under the model \eqref{eq:ms_model}, $\scrA^*(\xi) = - \nabla \phi(\Mst)$.
This suggests that $\nabla \phi(\Mst)$ could represent ``noise'' for more general $\phi$.
Indeed, this is a natural choice in other settings such as statistical maximum likelihood estimation.

More generally, $\Mst$ can be a completely arbitrary low-rank matrix which we will typically think of as an \emph{approximate minimizer} of $\phi$;
the gradient $\nabla \phi(\Mst)$ is a suitable measure of error because we expect it to be small near the true minimizer(s) of $\phi$.

With this in mind, we have the following result generalizing \Cref{thm:matsens}; we prove this in \Cref{sec:proofs_landscape}.
\begin{theorem}
	\label{thm:noisy}
	Let $\phi$ be convex on $\Mspace$ (asymmetric case) or $\symms_d^+$ (symmetric case).
	For $r \geq \rstar \geq 1$,
	denote
	\[
		\mu \coloneqq \mu_{r + \rstar}(\phi), \quad L \coloneqq L_{r + \rstar}(\phi), \quad L_2 \coloneqq L_2^{(r + \rstar)}(\phi),
	\]
	and suppose
	\begin{equation}
		\label{eq:mu_cond}
		\mu > \frac{L_2}{2 \sqrt{r/\rstar} + L_2/L}.
	\end{equation}
	Set
	\begin{equation}
		\label{eq:mueff}
		\mueff \coloneqq \frac{1}{2} \parens*{ \sqrt{(L + \mu)^2 + \frac{\rstar}{r} L_2^2} - \sqrt{\frac{\rstar}{r}} L_2 - (L - \mu) } > 0,
	\end{equation}
	and let $\lambda \geq 0$. Finally, suppose that
	\begin{itemize}
		\item (Asymmetric case) $\Mst$ is any matrix in $\Mspace$ with rank at most $\rstar$,
		and $(U, V)$ is any second-order critical point of \eqref{eq:ncvx_asym}, and set $M = U V\transpose$; or
		\item (Symmetric case) $\Mst$ is any matrix in $\symms_d^+$ with rank at most $\rstar$,
		and $U$ is any second-order critical point of \eqref{eq:ncvx_sym}, and set $M = U U\transpose$.
	\end{itemize}
	Then
	\[
		\normF{M - \Mst} \leq \frac{ 6 \sqrt{\rstar} \lambda + \sqrt{r + \rstar} (\opnorm{\nabla \phi(\Mst)} - \lambda)_+ }{\mueff}.
	\]
\end{theorem}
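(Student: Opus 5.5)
The plan is to follow the standard ``certificate'' strategy: compare the optimality conditions at the second-order critical point with the restricted strong convexity of $\phi$ along the segment from $\Mst$ to $M$. Throughout, write $G \coloneqq \nabla\phi(M)$ and $\genmat \coloneqq M - \Mst$. Note that $\rank(\genmat) \leq r + \rstar$ and every point on the segment has rank at most $r + \rstar$, so the constants $\mu$, $L$ and $L_2$ of the statement apply.

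\textbf{Reduction to a balanced, symmetric-like picture.} First I would treat the asymmetric case by showing that the penalty forces balance. The first-order conditions are
\[
	GV + \lambda U = 0, \qquad G\transpose U + \lambda V = 0 .
\]
For $\lambda > 0$ these give $U\transpose U = V\transpose V$. For $\lambda = 0$ I would instead exploit the invariance $(U,V)\mapsto(UA, V A^{-\top})$, which preserves $f_0$ and criticality, and rebalance. After balancing, $U = W\Sigma^{1/2}R$ and $V = Z\Sigma^{1/2}R$ share singular values. The first-order conditions then read
\[
	P_W G P_Z = -\lambda W Z\transpose, \qquad P_W G P_Z^\perp = 0, \qquad P_W^\perp G P_Z = 0 ,
\]
so that $G = -\lambda WZ\transpose + G_\perp$, with $G_\perp$ supported on the orthogonal complements. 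In the symmetric case the analogous statement is $G U = -\lambda U$, and the argument below runs in parallel.

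\textbf{Second-order information.} I would evaluate the Hessian quadratic form on two families of directions.
\begin{enumerate}
	\item Directions that add a new rank-one component, $\dot U = u e\transpose$ and $\dot V = -v e\transpose$, where $u, v$ are top singular vectors of $G_\perp$ and $e$ is chosen as the weakest direction of $U$ (the one with smallest $\sigma_r$). Using $\nabla^2\phi \leq L_2^{(r+\rstar)}$ on rank-two perturbations, this should give
	\[
		\opnorm{G_\perp} \leq \lambda + L_2\,\sigma_r(M).
	\]
	The constant $L_2$ enters exactly here.
	\item Directions $\dot U, \dot V$ that move the factorization toward the components of $\Mst$. Averaging over the $r$ columns of the factor, with weights chosen so that only $\rstar$ effective directions count, should yield a bound on $\sigma_r(M)$ of the form $\sigma_r(M) \lesssim \sqrt{\rstar/r}\,\normF{\genmat}$ plus terms in $\normF{\genmat}$ weighted by $L$. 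The factor $\sqrt{\rstar/r}$ comes from the fact that $\sigma_r^2 \leq \frac{1}{r}\sum_{j\le r}\sigma_j^2$ restricted to the span relevant to $\Mst$.
\end{enumerate}

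\textbf{Main inequality.} Convexity along the segment gives
\[
	\mu\normF{\genmat}^2 \leq \ip{G - \nabla\phi(\Mst)}{\genmat}.
\]
I would split $\ip{G}{\genmat}$ using the structure of $G$:
\begin{itemize}
	\item The tangent part contributes $-\lambda\ip{WZ\transpose}{\genmat}$. This is at most $\lambda\nucnorm{\Mst} - \lambda\nucnorm{M}$ up to the usual decomposition, and the shrinkage term is controlled by $\sqrt{\rstar}\lambda\normF{\genmat}$ with a small constant.
	\item The part $\ip{G_\perp}{-\Mst}$ is bounded by $\opnorm{G_\perp}\nucnorm{P^\perp\Mst P^\perp}$.
	\item For $-\ip{\nabla\phi(\Mst)}{\genmat}$, I would compare $\opnorm{\nabla\phi(\Mst)}$ with $\lambda$. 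The portion up to $\lambda$ is again absorbed into the nuclear-norm shrinkage, leaving $\sqrt{r+\rstar}\,(\opnorm{\nabla\phi(\Mst)} - \lambda)_+\normF{\genmat}$, since $\genmat$ has rank at most $r+\rstar$.
\end{itemize}
Substituting the second-order bounds produces a quadratic inequality in $\normF{\genmat}$ whose effective curvature is
\[
	\mu - (\text{terms in } L - \mu \text{ and } \sqrt{\rstar/r}\,L_2).
\]

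\textbf{Main obstacle.} The hard step will be choosing the combined second-order test direction, i.e.\ the optimal mixing parameter between the two families, so that the resulting quadratic inequality yields exactly the root
\[
	\mueff = \tfrac{1}{2}\parens*{\sqrt{(L+\mu)^2 + \tfrac{\rstar}{r}L_2^2} - \sqrt{\tfrac{\rstar}{r}}\,L_2 - (L-\mu)}
\]
rather than a lossy constant. I would first try a one-parameter family $\dot U = \alpha\,(\text{escape direction}) + (\text{correction toward } \Mst)$, and optimize over $\alpha$. I expect the discriminant of the resulting quadratic form in $\alpha$ to produce the square root in $\mueff$. Condition \eqref{eq:mu_cond} is exactly the requirement that this root is positive.
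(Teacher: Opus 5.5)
\textbf{There is a genuine gap.} Your escape-direction step is sound. With balanced factors, testing the Hessian on $\dot U = u e\transpose$, $\dot V = -v e\transpose$ gives $\opnorm{G_\perp}\le\lambda+L_2\sigma_r(M)$. This implies the paper's key second-order inequality $0\le\ip{\nabla\phi(M)}{\Mp}+(\lambda+L_2\sigma_r(M))\nucnorm{\Mp}$. (If $\lambda=0$ and $\rank(M)<r$ you cannot rebalance with an invertible $A$, but the scaling $\dot U=s\,uh\transpose$, $\dot V=-s^{-1}vh\transpose$ with $Uh=0$, $s\to0$, handles that case.)

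\textbf{First problem: the bound on $\sigma_r(M)$ is false.} No second-order argument can give $\sigma_r(M)\lesssim\sqrt{\rstar/r}\,\normF{H}$, even with further multiples of $\normF{H}$ added. Take $r=\rstar$, $\lambda=0$, $\phi(M)=\tfrac12\normF{M-\Mst}^2$. Every factorization of $\Mst$ is a global minimizer, hence second-order critical, yet $H=0$ while $\sigma_r(M)=\sigma_{\rstar}(\Mst)>0$.

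What the argument actually needs is control of the \emph{product} $\sigma_r(M)\nucnorm{\Mp}$, jointly with $\normF{\Mp}$, relative to $\normF{H}^2$. This does not come from criticality at all. It is a purely algebraic Eckart--Young-type fact. Set $\alpha=\normF{\Mp}/\normF{H}$ and $\beta=\sigma_r(M)\nucnorm{\Mp}/(\normF{H}\normF{\Mp})$. Then any rank-$r$ $M$ and rank-$\rstar$ $\Mst$ satisfy $\alpha^2+\frac{r}{\rstar}\beta^2\le 1+(\beta-\alpha)_+^2$, and in particular $\alpha\beta\le\frac12\sqrt{\rstar/r}$. Without this you have no valid source for the $\sqrt{\rstar/r}$ factor. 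The naive bound $\sigma_r(M)\le\normF{H}/\sqrt{r-\rstar}$ degenerates at $r=\rstar$, and it loses a factor of $2$ even when $r\gg\rstar$.

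\textbf{Second problem: $L$ never enters your argument.} Your main inequality $\mu\normF{H}^2\le\ip{G-\nabla\phi(\Mst)}{H}$ uses only restricted strong convexity, so you cannot arrive at $\mueff$. Even granting the lemma above, testing against $H$ itself yields only the curvature $\mu-\frac{L_2}{2}\sqrt{\rstar/r}$, which is the $L\to\infty$ version of the theorem. That requires $\mu>\frac{L_2}{2}\sqrt{\rstar/r}$, strictly stronger than the stated hypothesis $\mu>L_2/(2\sqrt{r/\rstar}+L_2/L)$.

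The improvement does not come from a better second-order test direction, as your ``main obstacle'' paragraph suggests; the escape direction is the only second-order input. It comes from three steps:
\begin{itemize}
\item testing first-order information against the reweighted error $t_1\HT+t_2\HTp$, with $t_1\ge t_2\ge 0$ and $(1-\alpha^2)t_1^2+\alpha^2t_2^2=1$;
\item lower-bounding $\ip{\nabla\phi(M)-\nabla\phi(\Mst)}{t_1\HT+t_2\HTp}$ by the two-sided estimate $\abs{\ip{\nabla\phi(M_2)-\nabla\phi(M_1)}{E}-\frac{L+\mu}{2}\ip{M_2-M_1}{E}}\le\frac{L-\mu}{2}\normF{M_2-M_1}\normF{E}$;
\item taking, in the resulting coefficient of $\normF{H}^2$, the max over $(t_1,t_2)$ and then the min over feasible $(\alpha,\beta)$. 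The square root in $\mueff$ is the value of this min--max.
\end{itemize}

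\textbf{Consequence for the bookkeeping.} Reweighting changes your accounting. Since $t_1\HT+t_2\HTp=t_1M+(t_1-t_2)\Mp-t_1\Mst$, you pick up $(t_1-t_2)\nucnorm{\Mp}$ terms that must be cancelled by the $\lambda$-terms. This is why the noise is split as $\min\{\opnorm{\nabla\phi(\Mst)},\lambda\}+(\opnorm{\nabla\phi(\Mst)}-\lambda)_+$. The bound $t_1\le 1+\sqrt2$ at the optimum is what produces the constant $(1+\sqrt2)^2\le 6$.
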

A few remarks are in order:
\begin{remark}
	\label{rmk:Lequal}
	First, we can always simplify \eqref{eq:mu_cond} and \eqref{eq:mueff} by noting that $L_2 \leq L$ and replacing $L_2$ everywhere by $L$;
	we then obtain the simplfied condition
	\[
		\mu > \frac{L}{1 + 2 \sqrt{r/\rstar}}
	\]
	(note that this is equivalent to \eqref{eq:kappacrit} in \Cref{ex:overpbad}),
	and
	\[
		\mueff \geq \frac{1}{2} \parens*{ \sqrt{(L + \mu)^2 + \frac{\rstar}{r} L^2} - \sqrt{\frac{\rstar}{r}} L - (L - \mu) }.
	\]
\end{remark}

\begin{remark}
	\label{rmk:thmrecov_rip}
	How do we recover \Cref{thm:matsens} in the matrix sensing setting of \Cref{sec:intro}?
	As discussed above, under the model \eqref{eq:ms_model} with quadratic $\phi$ of the form \eqref{eq:quadraticphi},
	we have $\opnorm{\nabla \phi(\Mst)} = \opnorm{\scrA^*(\xi)}$.
	
	Furthermore, under \eqref{eq:rip} with $\genrk \geq r + \rstar$, we have $\mu \geq 1 - \deltak$ and $L_2 \leq L \leq 1 + \deltak$.
	Using the simplifications of \Cref{rmk:Lequal},
	the condition \eqref{eq:mu_cond} becomes
	\begin{gather*}
		1 - \deltak > \frac{1 + \deltak}{1 + 2 \sqrt{r/\rstar}}
		\qquad \Longleftrightarrow \qquad \deltak < \frac{1}{1 + \sqrt{\rstar/r}},
	\end{gather*}
	which is precisely \eqref{eq:deltacrit},
	and
	\begin{align*}
		\mueff &\geq \frac{1}{2} \parens*{ \sqrt{4 + \frac{\rstar}{r} (1 + \deltak)^2} - \sqrt{\frac{\rstar}{r}} (1 + \deltak) - 2 \deltak } \\
		&= \sqrt{1 + \frac{\rstar}{r} \parens*{\frac{1 + \deltak}{2}}^2} - \sqrt{\frac{\rstar}{r}} \frac{1 + \deltak}{2} - \deltak.
	\end{align*}
	Noting that
	\[
		\delta \mapsto \sqrt{1 + \frac{\rstar}{r} \parens*{\frac{1 + \delta}{2}}^2} - \sqrt{\frac{\rstar}{r}} \frac{1 + \delta}{2}
	\]
	is decreasing in $\delta$ and has value equal to $\deltac$ for $\delta = \deltac$,
	we can, for $\deltak < \deltac$, simplify the lower bound on $\mueff$ to
	\[
		\mueff \geq \deltac - \deltak.
	\]
	Hence we obtain \Cref{thm:matsens}.
\end{remark}

\begin{remark}
	\label{rmk:Llarge}
	In cases where $L \gg L_2$, we can take $L \to \infty$ in \Cref{thm:noisy} and obtain the following simplified versions of \eqref{eq:mu_cond} and \eqref{eq:mueff}:
	\begin{gather*}
		\mu > \frac{L_2}{2 \sqrt{r/\rstar}}, \\
		\mueff \geq \mu - \frac{L_2}{2 \sqrt{r/\rstar}}.
	\end{gather*}
	This result is, in fact, easier to prove than \Cref{thm:noisy};
	much of the complexity of our proof comes from showing the benefits from finite $L$.
	We will comment on this in the proof in \Cref{sec:proofs_landscape}.
\end{remark}

\subsection{Global optimality}
\label{sec:res_global}
The previous result \Cref{thm:noisy} says, intuitively,
that if $\phi$ has a low-rank approximate minimizer, then we can approximately recovery that minimizer via second-order critical points of the nonconvex nuclear-norm regularized problem \eqref{eq:ncvx_asym} or \eqref{eq:ncvx_sym}.
However, if, in addition, the regularized convex problem (\eqref{eq:cvx_asym} or \eqref{eq:cvx_sym}) has a \emph{low-rank global optimum},
we can say more:
second-order critical points of the nonconvex problem recover that optimum.

In the asymmetric case, $\Mopt$ is a global optimum of \eqref{eq:cvx_asym} if and only if it satisfies the subgradient condition
\begin{equation}
	\label{eq:global_asym}
	\nabla \phi(\Mopt) \in -\lambda \partial \nucnorm{\Mopt},
\end{equation}
where $\partial \nucnorm{M}$ is the subgradient of the nuclear norm at $M \in \Mspace$.

In the symmetric case, a matrix $\Mopt \succeq 0$ is globally optimal for \eqref{eq:cvx_sym} if and only if
\begin{align}
	\label{eq:global_sym_comp}
	(\nabla \phi(\Mopt) + \lambda I_d) \Mopt &= 0, \qquad \text{and} \\
	\label{eq:global_sym_dualfeas}
	\nabla \phi(\Mopt) + \lambda I_d &\succeq 0.
\end{align}
The first condition is a complementary slackness condition,
while the second corresponds to dual feasibility.

The following result says that, if there is a low-rank global optimum, we can recover it exactly under the same conditions as \Cref{thm:noisy}:
\begin{theorem}
	\label{thm:global_main}
	Under the conditions of \Cref{thm:noisy},
	suppose, in addition, that
	\begin{itemize}
		\item (Asymmetric case) $\Mst = \Mopt$ is a global optimum of \eqref{eq:cvx_asym} in the sense of \eqref{eq:global_asym}; or
		
		\item (Symmetric case) $\Mst = \Mopt$ is a global optimum of \eqref{eq:cvx_sym} in the sense of \eqref{eq:global_sym_comp} and \eqref{eq:global_sym_dualfeas}.
	\end{itemize}
	Then $M = \Mst = \Mopt$.
\end{theorem}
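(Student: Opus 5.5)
The plan is to reuse the machinery behind \Cref{thm:noisy}, but to replace the crude noise bound $\opnorm{\nabla\phi(\Mst)}$ by the exact optimality structure of $\Mopt$. The proof of \Cref{thm:noisy} should ultimately produce an inequality of the form
\[
\mueff\normF{M-\Mst}^2 \le \ip{-\nabla\phi(\Mst)}{M-\Mst} - \lambda(\text{nuclear-norm terms}) .
\]
This comes from combining the first-order conditions at $(U,V)$ (resp.\ $U$), the second-order condition tested along a direction that moves $M$ toward $\Mst$, and restricted strong convexity and smoothness of $\phi$ between $M$ and $\Mst$ (rank at most $r+\rstar$). In that proof the linear term is bounded by $\opnorm{\nabla\phi(\Mst)}$ times a nuclear norm of $M-\Mst$ on the relevant subspaces, and the $\lambda$ terms give the $6\sqrt{\rstar}\lambda$ and $(\cdot-\lambda)_+$ contributions. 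I would redo that final step using \eqref{eq:global_asym} (resp.\ \eqref{eq:global_sym_comp}--\eqref{eq:global_sym_dualfeas}). The goal is to show that the right-hand side is $\le 0$, which forces $\normF{M-\Mst}=0$ because $\mueff>0$.

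\textbf{Asymmetric case.} Write $-\nabla\phi(\Mopt)=\lambda(\Ustbr\Vstbr\transpose+W)$, where $\Ustbr,\Vstbr$ are the singular subspaces of $\Mopt$, $W\in\Tstp$, and $\opnorm{W}\le1$. At a critical point of \eqref{eq:ncvx_asym} the nuclear-norm penalty is balanced: $U\transpose U=V\transpose V$ and $\frac12(\normF U^2+\normF V^2)=\nucnorm{M}$, as in \cite{McRae2026}. By convexity of the nuclear norm,
\[
\ip{-\nabla\phi(\Mopt)}{M-\Mopt}\le\lambda(\nucnorm M-\nucnorm{\Mopt}).
\]
The aim is to show this is exactly the quantity the $\lambda$-terms in the \Cref{thm:noisy} argument cancel. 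Equivalently, the inequality from the second-order analysis should read
\[
\mueff\normF{M-\Mst}^2\le\ip{-\nabla\phi(\Mst)-\lambda G}{M-\Mst},
\]
where $G$ is a subgradient-type matrix of the nuclear norm at $M$. This is what first-order criticality gives: $\nabla\phi(M)V=-\lambda U$ and $\nabla\phi(M)\transpose U=-\lambda V$. The decisive step then uses two facts. First, $-\nabla\phi(\Mst)\in\lambda\partial\nucnorm{\Mst}$. Second, $G\in\partial\nucnorm{M}$ in the sense that $\ip{G}{M}=\nucnorm M$ and $\opnorm G\le1$ on the relevant directions. Together with monotonicity of the subdifferential of the nuclear norm, these give
\[
\ip{-\nabla\phi(\Mst)-\lambda G}{M-\Mst}\le0.
\]

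\textbf{Symmetric case.} Here $\nabla\phi(\Mopt)+\lambda I\succeq0$ and it annihilates $\Mopt$. Hence
\[
\ip{-\nabla\phi(\Mopt)-\lambda I}{M-\Mopt}=-\ip{\nabla\phi(\Mopt)+\lambda I}{M}\le0,
\]
because $M\succeq0$. This plays the role of the monotonicity step with $G=I$.

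\textbf{Main obstacle.} The proof of \Cref{thm:noisy} very likely does not keep the linear term in this clean monotone form. To obtain the sharp $\mueff$, it mixes in the second-order condition with a specific direction, and it estimates noise projections onto $\Tst$ and its complement separately. The hard part is to check that, with $\opnorm{\nabla\phi(\Mst)}$ replaced by the exact structure $\lambda(\Ustbr\Vstbr\transpose+W)$, each noise contribution is dominated by the corresponding $\lambda$ contribution \emph{with no leftover}. In other words, the $6\sqrt{\rstar}\lambda$ term must disappear, not merely be bounded. The $6\sqrt{\rstar}\lambda$ arises because $\lambda\,\Ustbr\Vstbr\transpose$ is not known to be cancelled, and with exact optimality it is cancelled by the $\Tst$ component of $\nabla\phi(\Mopt)$. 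So I would first isolate, in the \Cref{thm:noisy} proof, the inequality just before the noise is bounded, and then substitute the optimality condition there.

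If that route is blocked, the fallback is a perturbation argument. Apply \Cref{thm:noisy} to the tilted function $\tilde\phi(X)=\phi(X)-\ip{\nabla\phi(\Mopt)+\lambda\Ustbr\Vstbr\transpose}{X}$, which has the same Hessian and hence the same $\mu,L,L_2$. Then argue, using the $(\cdot-\lambda)_+$ structure together with a refined $\Tst$ accounting, that the resulting bound is zero.
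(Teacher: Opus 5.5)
Your plan is the paper's route: stop the \Cref{thm:noisy} argument just before $\ip{\nabla\phi(\Mst)}{\cdot}$ is bounded by $\opnorm{\nabla\phi(\Mst)}$, and substitute the exact optimality condition there. But the step that makes this work is missing, and the inequality you put in its place is not what the analysis delivers. Your bound $\mueff\normF{M-\Mst}^2 \le \ip{-\nabla\phi(\Mst)-\lambda G}{M-\Mst}$ corresponds to testing along $M-\Mst$, i.e.\ $t_1=t_2=1$. That choice only yields the coefficient $\mu - \frac{L_2}{2}\sqrt{\rstar/r}$. This coefficient can be $\le 0$ under \eqref{eq:mu_cond}, which only asks for $\mu > L_2/(2\sqrt{r/\rstar}+L_2/L)$. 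To obtain $\mueff$, the analysis must use the reweighted error $\bar t_1\HT+\bar t_2\HTp$, with $\bar t_1\ge\bar t_2\ge0$ in general unequal; this gives \eqref{eq:basicineq3}. Monotonicity of $\partial\nucnorm{\cdot}$ between $M$ and $\Mst$ says nothing about the right-hand side of that inequality. You flag exactly this as the ``main obstacle'' but leave it unresolved.

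The missing idea is the decomposition \eqref{eq:H_decomp}:
\[
t_1\HT+t_2\HTp = t_1M+(t_1-t_2)\Mp-t_1\Mst, \qquad t_1-t_2\ge0.
\]
\Cref{lem:landscape_basic} already turns the criticality conditions into the $\lambda$-terms $\lambda[t_1(\nucnorm{\Mst}-\nucnorm{M})-(t_1-t_2)\nucnorm{\Mp}]$. (The subgradient it uses, $Q_M+\Qp$, is a subgradient at $t_1M+(t_1-t_2)\Mp$, not merely at $M$.) Now $-\nabla\phi(\Mst)\in\lambda\partial\nucnorm{\Mst}$ gives $\ip{-\nabla\phi(\Mst)}{\Mst}=\lambda\nucnorm{\Mst}$ and $\opnorm{\nabla\phi(\Mst)}\le\lambda$. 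Hence $\ip{-\nabla\phi(\Mst)}{M}\le\lambda\nucnorm{M}$ and $\ip{-\nabla\phi(\Mst)}{\Mp}\le\lambda\nucnorm{\Mp}$. Combined with the nonnegative weights $t_1$ and $t_1-t_2$, this is exactly \eqref{eq:gradMst_lb}. It cancels the right side of \eqref{eq:basicineq3} for every admissible $(t_1,t_2)$, so $\mueff\normF{\errmat}^2\le0$.

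In your monotonicity language, you must compare subgradients at $t_1M+(t_1-t_2)\Mp$ and $t_1\Mst$, whose difference is the reweighted error. The symmetric case is identical, using $M,\Mst,\Mp\succeq0$ together with dual feasibility and complementary slackness; your computation there covers only $t_1=t_2$. No $\Tst$-accounting is needed, because the cancellation is exact before any noise is bounded.

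Your fallback does not work either. A linear tilt of $\phi$ changes $\nabla f_\lambda$, so $(U,V)$ need not remain critical for the tilted problem. Even if it did, \Cref{thm:noisy} with $\opnorm{\nabla\tilde\phi(\Mst)}=\lambda$ gives only the nonzero bound $6\sqrt{\rstar}\lambda/\mueff$.
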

In particular, we are assuming that there is a global optimum $\Mopt$ of the convex problem with $\rank(\Mopt) \leq \rstar$.
See the remarks after \Cref{thm:noisy} for further discussion of the assumptions.
In the asymmetric case with $\lambda > 0$ and $L = L_2$ (see \Cref{rmk:Lequal}),
this recovers \cite[Cor.~1.4]{Ouyang2025}.

\Cref{thm:noisy,thm:global_main} are entirely distinct results\footnote{Except in the case $\nabla \phi(\Mst) = 0$ and $\lambda = 0$, when clearly $\Mst$ is a global optimum of the convex problem, and the error bound of \Cref{thm:noisy} is zero.} even if the proofs share many common elements (see \Cref{sec:proofs_landscape}).
For example, in the matrix sensing setting of \Cref{sec:intro}, there is no reason to expect, in general, that the ``ground truth'' matrix $\Mst$ of the model \eqref{eq:ms_model} will also be a global optimum of the matrix LASSO problem \eqref{eq:lasso}
(although this will indeed be the case in our carefully-constructed counterexamples in \Cref{sec:res_counter} below).

One might then reasonably ask: given \Cref{thm:global_main},
why do we need the ``recovery'' result \Cref{thm:noisy} (or its specialization, \Cref{thm:matsens}) when we already know (see \Cref{thm:lasso} and \Cref{subsec:relcvx}) that global optima of the convex formulations are good statistical estimators?
There are two reasons for this:
\begin{itemize}
	\item First, results for the convex matrix LASSO such as \Cref{thm:lasso} require $\lambda \gtrsim \opnorm{\scrA^*(\xi)} = \opnorm{\nabla \phi(\Mst)}$.
	\Cref{ex:error} (if we were to take $r$ large) makes clear that this is necessary to avoid an error that grows with the problem dimension.
	\Cref{thm:matsens,thm:noisy} do not require this and hence give a meaningful error bound for a broader range of problem parameters.
	
	\item Second, \Cref{thm:global_main} requires the \emph{a priori} existence of a low-rank solution $\Mopt$ to \eqref{eq:lasso}.
	Although this is what one intuitively expects when using nuclear norm regularization,
	there are few theoretical guarantees of this in the literature.
	Current results only hold under significantly stronger conditions than \Cref{thm:matsens,thm:noisy};
	see the recent works \cite{McRae2026,Agterberg2025} for some such results and further references. 
\end{itemize}
\subsection{Counterexamples}
\label{sec:res_counter}
In this section, we give two classes of counterexamples (with quadratic $\phi$ of the form \eqref{eq:quadraticphi}) showing that the condition \eqref{eq:mu_cond} in \Cref{thm:noisy,thm:global_main} is sharp. We prove these in \Cref{sec:proof_counter} as corollaries of a more general result, \Cref{lem:spur_gen}.

The first result shows that, for given $r$ and $\rstar$, the condition \eqref{eq:mu_cond} for \Cref{thm:noisy,thm:global_main} (and, equivalently, the condition \eqref{eq:deltacrit} for \Cref{thm:matsens}) is the tightest possible (assuming $L = L_2$; see \Cref{rmk:Lequal}).
This is a generalization of \cite[Ex.~6.2]{Zhang2025c} and \cite[Ex.~4.1 and 4.4]{Zhang2025a} to the case $\lambda > 0$.
\newcommand{\commonpts}{%
	\item For any $\lambda \geq 0$, there exists $\xi \in \R^n$ with $\opnorm{\scrA^*(\xi)} = \lambda$ such that $\Mst$ is the unique global optimum of the convex problem \eqref{eq:cvx_asym} (resp.\ \eqref{eq:cvx_sym}) with quadratic $\phi$ of the form \eqref{eq:quadraticphi} and $b = \scrA(\Mst) + \xi$.
}%
\begin{theorem}
	\label{thm:optmucond}
	In the asymmetric case (respectively, in the symmetric case),
	for any integers $r \geq \rstar \geq 1$ and $d \geq r + \rstar$,
	and for any $\mu < 1$ satisfying
	\begin{equation}
		\label{eq:mu_counter}
		\mu \geq \frac{1}{1 + 2 \sqrt{r/\rstar}},
	\end{equation}
	there is $\Mst \in \Mspaced$ (resp.\ $\symms_d^+$) with rank $\rstar$ and $\sigma_1(\Mst) = \sigma_{\rstar}(\Mst) = 1$,
	an integer $n$, and a linear operator $\scrA \colon \Mspaced \to \R^n$ (resp.\ $\scrA \colon \symms_d \to \R^n$) such that the following hold:
	\begin{itemize}
		\item $L_{\genrk}(\scrA) = 1$ and $\mu_{\genrk}(\scrA) \geq \mu$ for all $\genrk \geq 1$.
		
		\commonpts
		
		\item (Asymmetric case) The nonconvex problem \eqref{eq:ncvx_asym} with the given $\phi$, $\lambda$, and $r$ has a second-order critical point $(U, V)$ with $U \neq 0$, $V \neq 0$ such that $U\transpose \Mst = 0$ and $\Mst V = 0$, or, respectively,
		
		\item (Symmetric case) The nonconvex problem \eqref{eq:ncvx_sym} with the given $\phi$, $\lambda$, and $r$ has a second-order critical point $U \neq 0$ with $\Mst U = 0$.

		\item Furthermore, if the inequality \eqref{eq:mu_counter} is strict,
		then $(U, V)$ (resp.\ $U$) is a local minimum.
	\end{itemize}
\end{theorem}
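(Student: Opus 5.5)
The construction generalizes \Cref{ex:overpbad} to $\lambda > 0$, with its constants arranged so that $L_\genrk = 1$ and $\mu_\genrk \geq \mu$. Fix orthonormal $[P\ P_\perp]$ and $[Q\ Q_\perp]$ with $P, Q$ having $\rstar$ columns and $P_\perp, Q_\perp$ having $r$ columns. In the symmetric case take $Q = P$ and $Q_\perp = P_\perp$. Set $\Mst = PQ\transpose$ and
\[
	G = \alpha PQ\transpose - \beta P_\perp Q_\perp\transpose, \qquad \alpha^2 \rstar + \beta^2 r = 1 .
\]
Define $\scrA$ through its quadratic form
\[
	\norm{\scrA(\genmat)}^2 = \normF{\genmat}^2 - (1-\mu)\ip{G}{\genmat}^2 ,
\]
for instance by taking $\scrA$ to be a rank-one perturbation of the identity with $n = d^2$ (or $\dim \symms_d$). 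Since $\normF{G} = 1$, this immediately gives $L_\genrk(\scrA) = 1$ and $\mu_\genrk(\scrA) \geq \mu$ for every $\genrk$. The weights $\alpha, \beta$ are left free and will be tuned against $\mu$.

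For the noise I would take
\[
	\xi = \scrA\bigl(-\lambda\, \scrA^{\dagger *}(PQ\transpose)\bigr),
\]
or more simply choose $b$ directly so that $\nabla\phi(\Mst) = \scrA^*(\scrA(\Mst) - b) = -\lambda W$. Here $W = PQ\transpose + Z$ with $\opnorm{W} = 1$, and $Z$ is a perturbation supported on $P_\perp(\cdot)Q_\perp\transpose$ with $\opnorm{Z} < 1$. This is possible because $\scrA^*$ is invertible as a map on $\Mspaced$. Then $W$ lies in $\partial\nucnorm{\Mst}$ (resp.\ $\nabla\phi(\Mst) + \lambda I \succeq 0$), so $\Mst$ is optimal, $\opnorm{\scrA^*(\xi)} = \lambda$, and strict convexity ($\mu > 0$) gives uniqueness.

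The candidate spurious point is $U = cP_\perp$, $V = cQ_\perp$ (resp.\ $U = cP_\perp$). It satisfies $U\transpose \Mst = 0$ and $\Mst V = 0$ automatically.

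1. \emph{First-order condition.} With $M = c^2 P_\perp Q_\perp\transpose$, we have $\nabla\phi(M) = \nabla\phi(\Mst) + \scrA^*\scrA(M - \Mst)$. Writing $\scrA^*\scrA = I - (1-\mu)\,G\otimes G$ gives the explicit formula
\[
	\nabla\phi(M) = -\lambda W + (M - \Mst) + (1-\mu)(\alpha + c^2 r\beta)\,G .
\]
Stationarity requires $(\nabla\phi(M) + \lambda I)$ to annihilate $P_\perp$ on the relevant side. Projecting onto $P_\perp(\cdot)Q_\perp\transpose$ gives a scalar equation for $c^2$. I would choose $Z = \zeta P_\perp Q_\perp\transpose$ with $\zeta \in [0,1)$ so that $\lambda$ enters only as a shift, which makes the equation solvable exactly as in \Cref{ex:overpbad}. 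The goal is to pick $c^2$ so that the $P_\perp Q_\perp\transpose$ component of $\nabla\phi(M) + \lambda I$ vanishes.

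2. \emph{Second-order condition.} Since $\phi$ is quadratic, the Hessian at $(U, V)$ in a direction $(\dot U, \dot V)$ is
\[
	\norm{\scrA(\dot U V\transpose + U\dot V\transpose)}^2 + 2\ip{\nabla\phi(M)}{\dot U\dot V\transpose} + \lambda(\normF{\dot U}^2 + \normF{\dot V}^2).
\]
The dangerous directions move mass toward $\Mst$, namely $\dot U = P R$, $\dot V = Q S$. In these directions the gradient term is $-(1 + \lambda - \lambda + \dots)\ip{PQ\transpose}{\dot U\dot V\transpose}$ together with the $G$ correction. The worst case reduces to an explicit $2\times2$ or scalar inequality in $\alpha, \beta, c, \mu$. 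Choosing $\alpha = 1/(2\sqrt{\rstar})$ and $\beta = 1/(2\sqrt{r})$ as in \Cref{ex:overpbad} (so that the balance exactly matches $\kapcr$), this inequality should become exactly $\mu \geq 1/(1 + 2\sqrt{r/\rstar})$. All remaining directions (in the span of $P_\perp, Q_\perp$, or orthogonal to everything) contribute nonnegative terms by the first-order condition and by $\opnorm{Z} \leq 1$.

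3. \emph{Local minimality.} Under strict inequality the Hessian is positive definite modulo the symmetry directions $(UA, -VA\transpose)$. In the asymmetric case the nuclear-norm regularizer breaks the $GL(r)$ symmetry down to $O(r)$. I would conclude local minimality by a standard second-order sufficiency argument on the quotient, or directly by expanding $f_\lambda$ in the problematic directions up to fourth order.

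The main obstacle is step 2: showing that the minimum of the Hessian form over all directions reduces exactly to the threshold \eqref{eq:mu_counter}, uniformly in $\lambda$. The cross terms between the $\scrA$-part and the $\lambda$-terms must cancel, which is what the choice of $W$ and $c$ is meant to enforce. I would first verify this for $\lambda = 0$, reproducing \cite[Ex.~4.4]{Zhang2025a}, and then check that $\lambda$ only adds $\lambda(\normF{\dot U}^2 + \normF{\dot V}^2) - 2\lambda\ip{W}{\dot U\dot V\transpose} \geq 0$ by AM--GM and $\opnorm{W} \leq 1$.
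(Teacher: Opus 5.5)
Your construction matches the paper's: it is \Cref{lem:spur_gen} with $\rmax = r$ and $\epsilon = \mu$, where in the paper's scaling $c = \sqrt{1-\mu}\,\alpha$ and $c_\perp = \sqrt{1-\mu}\,\beta$. However, the two steps that carry the proof both fail as written.

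\textbf{The threshold has the opposite direction.} Carry out your Step 2 at $\lambda = 0$.
\begin{itemize}
\item Stationarity of $U = sP_\perp$, $V = sQ_\perp$ forces $s^2 = c c_\perp \rstar/(1 - c_\perp^2 r)$.
\item At that point $\nabla\phi(M) = -\gamma\, PQ\transpose$, with $\gamma = \mu/(1 - c_\perp^2 r)$.
\item In the directions $(\Udt, \Vdt) = (PR, QS)$ the Hessian is $s^2(\normF{R}^2 + \normF{S}^2) - 2\gamma \ip{R}{S}$. The other blocks contribute nonnegatively because $c_\perp^2 r < 1$.
\end{itemize}
So the point is second-order critical iff $s^2 \ge \gamma$, i.e.\ $c c_\perp \rstar \ge \mu$, i.e.\ $(1-\mu)\alpha\beta\rstar \ge \mu$.

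Maximize $\alpha\beta\rstar$ subject to $\alpha^2\rstar + \beta^2 r = 1$ by AM--GM. The maximizer is $\alpha = 1/\sqrt{2\rstar}$, $\beta = 1/\sqrt{2r}$; your choice $1/(2\sqrt{\rstar})$, $1/(2\sqrt{r})$ violates your own normalization. The condition becomes $\frac{1-\mu}{2}\sqrt{\rstar/r} \ge \mu$, i.e.\ $\mu \le 1/(1 + 2\sqrt{r/\rstar})$. This is exactly the inequality the paper's proof arrives at.

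So the ``$\ge$'' in \eqref{eq:mu_counter} is a misprint for ``$\le$'', and ``$\mu < 1$'' should read $\mu > 0$, which you need anyway for uniqueness. Read literally, the statement is false. For $\mu$ strictly above the threshold, \eqref{eq:mu_cond} holds with $L = L_2 = 1$. Then \Cref{thm:global_main} forces $UV\transpose = \Mst$ at every second-order critical point, which is incompatible with $U\transpose\Mst = 0$ and $\rank(\Mst) \ge 1$.

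Your Step 2 is left uncomputed, and you assert it ``should'' give $\mu \ge \cdots$. It cannot be completed as planned. The spurious point exists exactly at or below the threshold, and it is a local minimum strictly below it.

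\textbf{Choosing $\zeta < 1$ breaks the case $\lambda > 0$.} Suppose $\nabla\phi(\Mst) = -\lambda(PQ\transpose + \zeta P_\perp Q_\perp\transpose)$ with $\zeta < 1$.
\begin{itemize}
\item The leftover $\lambda(1-\zeta)$ enters stationarity, giving $s^2 = (c c_\perp\rstar - \lambda(1-\zeta))/(1 - c_\perp^2 r)$.
\item The same Hessian computation then requires $c c_\perp \rstar - \mu \ge \lambda(1-\zeta)(1 + c c_\perp r)$.
\item This fails at the threshold and for all large $\lambda$. For $\lambda(1-\zeta) \ge c c_\perp \rstar$ there is no critical point of this form at all.
\end{itemize}
This also invalidates your last paragraph, which presumes the critical point does not move with $\lambda$.

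The fix is $\zeta = 1$, i.e.\ $W = PQ\transpose + P_\perp Q_\perp\transpose$. This is the paper's choice $\nabla\phi(\Mst) = -\lambda(\PQ + \PQp)$ in \eqref{eq:y_dualcert}. It still lies in $\partial\nucnorm{\Mst}$, and still gives $\opnorm{\scrA^*(\xi)} = \lambda$.

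With that fix, your reduction to $\lambda = 0$ is correct and somewhat cleaner than the paper's direct computation. Write $b = \scrA(\Mst + \Delta)$ with $\scrA^*\scrA(\Delta) = \lambda W$. Then exactly $f_\lambda = f_0 + \lambda g + \mathrm{const}$, where $g(U,V) = \frac{1}{2}(\normF{U}^2 + \normF{V}^2) - \ip{W}{UV\transpose} \ge 0$ (resp.\ $g(U) = \ip{I - W}{UU\transpose} \ge 0$). Since $g = 0$ at the spurious point, second-order criticality and local minimality transfer verbatim from $\lambda = 0$.

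Your quotient argument for local minimality is then also fine. Under strict inequality with $\rmax = r$, the Hessian kernel at $\lambda = 0$ is exactly the tangent space $\{(P_\perp A, -Q_\perp A\transpose)\}$ of the $GL(r)$ orbit (the $O(r)$ orbit in the symmetric case). So you do not need the fourth-order expansion used in the paper's proof of \Cref{lem:spur_gen}.
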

Note that this is a counterexample to both \Cref{thm:noisy,thm:global_main} simultaneously.
\Cref{thm:noisy} would predict $\norm{M - \Mst} \lesssim \sqrt{\rstar} \lambda$ because $\opnorm{\phi(\Mst)} = \lambda$,
while \Cref{thm:global_main} would predict $M = \Mst$ because $\Mst$ is globally optimal.
However the counterexample shows a spurious second-order critical point with error $\normF{M - \Mst} \geq \sqrt{\rstar}$.


The following result generalizes \Cref{ex:overpbad},
showing that the requirement for restricted strong convexity and smoothness at rank $\genrk = r + \rstar$ in \Cref{thm:noisy,thm:global_main} cannot be relaxed even if $r \gg \rstar$:
\begin{theorem}
	\label{thm:overpbad}
	In the asymmetric case (respectively, in the symmetric case),
	for any $\mu \in (0, 1)$ and any integers $\rsmall \geq \rstar \geq 1$ and
	\[
		\rbig \geq \frac{\rstar \mu + \rsmall}{1 - \mu},
	\]
	there are integers $d$ and $n$, a matrix $\Mst \in \Mspaced$ (resp.\ $\symms_d^+$) with rank $\rstar$ and $\sigma_1(\Mst) = \sigma_{\rstar}(\Mst) = 1$, and a linear operator $\scrA \colon \Mspaced \to \R^n$ (resp.\ $\scrA \colon \symms_d \to \R^n$) such that the following hold:
	\begin{itemize}
		\item $L_{\genrk}(\scrA) = 1$ and $\mu_{\genrk}(\scrA) > 0$ for all $\genrk \geq 1$, and $\mu_{\rsmall + \rstar}(\scrA) = \mu$.
		
		\commonpts
		
		\item (Asymmetric case) The nonconvex problem \eqref{eq:ncvx_asym} with $r = \rbig$ and the given $\phi$ and $\lambda$ has a local minimum $(U, V)$ with $U \neq 0$, $V \neq 0$ such that $U\transpose \Mst = 0$ and $\Mst V = 0$, or, respectively,
		
		\item (Symmetric case) The nonconvex problem \eqref{eq:ncvx_sym} with $r = \rbig$ and the given $\phi$ and $\lambda$ has a local minimum $U \neq 0$ with $\Mst U = 0$.
	\end{itemize}
\end{theorem}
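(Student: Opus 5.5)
The plan is to generalize the construction of \Cref{ex:overpbad}. The paper says both counterexample theorems are corollaries of a general spurious-point lemma (\Cref{lem:spur_gen}), so I would first prove that lemma in a form adapted to this setting.

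\textbf{The operator.} Take $d \ge \rbig + \rstar$ and orthonormal blocks $P \in \R^{d\times\rstar}$, $P_\perp\in\R^{d\times \rbig}$, with $Q, Q_\perp$ on the right side in the asymmetric case, or $Q = P$ in the symmetric case. Set $\Mst = P Q\transpose$ and
\[
G = \alpha\, P Q\transpose - \beta\, P_\perp Q_\perp\transpose, \qquad \alpha^2\rstar + \beta^2 \rbig = 1 .
\]
Define $\scrA$ through
\[
\norm{\scrA(\genmat)}^2 = \normF{\genmat}^2 - c\ip{G}{\genmat}^2, \qquad c \in (0,1).
\]
This immediately gives $L_\genrk(\scrA) = 1$ and $\mu_\genrk(\scrA) \ge 1-c > 0$ for all $\genrk$.

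For $\genrk = \rsmall + \rstar$, the maximum of $\ip{G}{\genmat}^2$ over unit-norm $\genmat$ of rank at most $\genrk$ is the sum of the top $\genrk$ squared singular values of $G$. If $\alpha \ge \beta$, this sum is $\alpha^2\rstar + \beta^2\rsmall$. We therefore need
\[
1 - c(\alpha^2\rstar + \beta^2 \rsmall) = \mu .
\]
If $\beta > \alpha$ the ordering changes, and we get the corresponding formula with the roles swapped.

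\textbf{The candidate spurious point.} Take $U = a P_\perp$ and $V = a Q_\perp$ in the asymmetric case, or $U = a P_\perp$ in the symmetric case. For the noise, choose $\xi$ so that $\scrA^*(\xi) = \lambda\,(PQ\transpose + W)$, where $W$ is supported on the complement of the row and column spaces of $\Mst$ and has $\opnorm{W} < 1$. Then $-\nabla\phi(\Mst) \in \lambda\,\partial\nucnorm{\Mst}$, so $\Mst$ is optimal for the convex problem. Strict dual feasibility, together with injectivity of $\scrA$ (the quadratic form is positive definite), gives uniqueness. In the symmetric case the analogue is $\scrA^*(\xi) = \lambda(P P\transpose - W)$ with $\opnorm{W}<1$ on the orthogonal complement, which gives \eqref{eq:global_sym_comp}–\eqref{eq:global_sym_dualfeas}. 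In both cases $\opnorm{\scrA^*(\xi)} = \lambda$.

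For $\scrA$ itself, realize $\scrA^*\scrA = I - cGG^*$ with $\scrA = (I - cGG^*)^{1/2}$. To ensure that $\xi$ with the prescribed $\scrA^*(\xi)$ exists, note that $\scrA$ is invertible, so any target is attainable.

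Next compute the gradient of $f_\lambda$ at the candidate point. By symmetry, $\nabla\phi(M)$ lies in $\mathrm{span}\{PQ\transpose,\, P_\perp Q_\perp\transpose\}$ plus the component $-\lambda W$. Choose $W$ on $P_\perp Q_\perp\transpose$, which is legitimate when $d$ is larger, or simply put $W$ elsewhere. The $P_\perp Q_\perp\transpose$-coefficient of $\nabla\phi(M)$ is then $a^2 - c\beta(\ldots)$, and first-order criticality becomes a single scalar equation:
\[
a^2 + c\beta(\alpha - \beta a^2\rbig) + \lambda = 0 \quad (\text{up to signs}).
\]
This fixes $a^2 > 0$, provided $\lambda$ is absorbed by rescaling or is accounted for through $W$.

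\textbf{Second-order condition.} Check it as in \Cref{ex:overpbad}. The dangerous direction moves mass toward $\Mst$, e.g.\ $\dot U = P R$, $\dot V = Q_\perp S$ and symmetric variants. Positivity of the Hessian reduces to an inequality between $c$, $\alpha$, $\beta$, $\rbig$, and $\rstar$. Strict inequality gives a strict local minimum up to the symmetry group; in the asymmetric case this uses the regularizer's balancing, as in \cite{McRae2026}.

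The main obstacle is choosing the parameters. We need to fit $(\alpha, \beta, c)$ so that three things hold at once: $\mu_{\rsmall + \rstar} = \mu$ exactly, $a^2 > 0$, and the Hessian condition holds at rank $\rbig$. I expect the hypothesis $\rbig \ge (\rstar\mu + \rsmall)/(1-\mu)$ to be exactly the feasibility condition that comes out of eliminating $c$. Concretely, I would let $c \to 1$, so that $\kappa_{\rbig+\rstar} \to \infty$ and the Hessian condition becomes easy. Then I would choose $\alpha^2\rstar = (1 + \theta)/2$ and $\beta^2\rbig = (1-\theta)/2$ as in \Cref{ex:overpbad}, and solve $c(\alpha^2\rstar + \beta^2\rsmall) = 1 - \mu$ for the balance parameter. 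The remaining work is to verify that the resulting inequality is implied by the stated bound on $\rbig$, and to check the boundary case, where equality must still give a local minimum with $c < 1$.
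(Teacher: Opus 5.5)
Your operator, spurious point and parameter limit are the paper's. The paper derives this theorem from \Cref{lem:spur_gen} with $r = r_{\max} = \rbig$, and your $\sqrt{c}\,\alpha$, $\sqrt{c}\,\beta$, $1-c$ are its $c$, $\cp$, $\epsilon$. Your guess about the hypothesis is also right. Solving $c(\alpha^2\rstar + \beta^2\rsmall) = 1-\mu$ together with $\alpha^2\rstar + \beta^2\rbig = 1$ gives $\alpha^2 \geq \beta^2$ if and only if $\rbig(1-\mu) \geq c\rsmall + \rstar(c-1+\mu)$, and the stated bound implies this strictly for every $c \in (1-\mu,1)$. So the hypothesis only secures the singular-value ordering behind your formula for $\mu_{\rsmall+\rstar}(\scrA)$. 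The second-order condition $c\alpha\beta\rstar > 1-c$ becomes strict by taking $c \uparrow 1$, and there is no delicate boundary case.

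\textbf{Main gap: $\lambda>0$.} You leave this unresolved. Rescaling cannot absorb $\lambda$, because $\scrA$ and $\Mst$ are normalized and fixed before $\lambda$ is chosen. Write the $P_\perp Q_\perp\transpose$-block of $W$ as $\omega P_\perp Q_\perp\transpose$. The first-order condition at $U = aP_\perp$, $V = aQ_\perp$ is then
\[
a^2(1 - c\beta^2\rbig) = c\alpha\beta\rstar - \lambda(1-\omega).
\]
If you put $W$ ``elsewhere'' ($\omega = 0$), there is no solution once $\lambda \geq c\alpha\beta\rstar$. By AM--GM, $c\alpha\beta\rstar < \frac12\sqrt{\rstar/\rbig}$, so your family yields no critical point for all moderately large $\lambda$.

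The missing idea is to saturate the dual certificate on exactly the block that carries the spurious point: take $\scrA^*(\xi) = \lambda(PQ\transpose + P_\perp Q_\perp\transpose)$, i.e.\ $\omega = 1$ and $\opnorm{W} = 1$.
\begin{itemize}
\item The $-\lambda P_\perp Q_\perp\transpose$ part of $\nabla\phi(M)$ cancels the regularizer in $\nabla f_\lambda$.
\item The $-\lambda PQ\transpose$ part neutralizes the regularizer in the Hessian along directions toward $\Mst$.
\item On the rest of the space the regularizer only adds $\lambda\normF{\cdot}^2$.
\end{itemize}
The spurious point and its curvature analysis are therefore literally those of the case $\lambda = 0$. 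Uniqueness of $\Mst$ then comes from strong convexity of $\phi$ (injectivity of $\scrA$), not from strict dual feasibility. Keeping $\opnorm{W}<1$ works only if $\omega = \omega(\lambda) \uparrow 1$ so that $\lambda(1-\omega)$ stays below the strict margin, which is the same idea in weakened form.

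\textbf{Second gap: local minimality.} This step is only asserted. You must check the Hessian in every direction, not just toward $\Mst$. Inside the $P_\perp$ block, the term $-c\ip{G}{\cdot}^2$ is controlled by $\tr^2(B)\leq \rbig\normF{B}^2$ and $c\beta^2\rbig<1$.

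In the asymmetric case with $\lambda = 0$ there is no balancing from the regularizer, so your appeal to it does not apply. The $\mathrm{GL}(\rbig)$ symmetry leaves kernel directions $(UY, -VY\transpose)$. You can either show that the kernel is exactly this orbit tangent and use a slice argument, or follow the paper:
\begin{itemize}
\item Set $\dot A = (\dot U + \dot V)/2$ and $\dot D = (\dot U - \dot V)/2$.
\item Use $\nabla\phi(UU\transpose)\preceq 0$ and $\norm{\scrA(E)}^2 \geq \norm{\scrA((E+E\transpose)/2)}^2$ (valid because $G$ is symmetric once $P = Q$) to reduce to the symmetric Hessian.
\item Check that the cubic term of $f_\lambda(U+\dot U, U+\dot V) - f_\lambda(U,U)$ vanishes on the kernel, so the quartic term $\frac12\norm{\scrA(\dot U\dot V\transpose)}^2 \geq 0$ decides.
\end{itemize}
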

In particular, even if $\rsmall$, $\rstar$, and $\mu$ are such that \eqref{eq:mu_cond} is satisfied (with $L_2 = L = 1$),
which ensures, by \Cref{thm:noisy,thm:global_main}, that there is no spurious local optimum with $r = \rsmall$,
there is a larger rank parameter $\rbig$ for which the nonconvex problem has a spurious local minimum with $r = \rbig$.

\section{Preliminary calculations}
Before proceeding to the proofs of our main results in \Cref{sec:res_gen},
we make some standard preliminary calculations.
\subsection{Gradients and Hessians}
In the asymmetric case,
the gradient of $f_\lambda(U, V)$ as given in \eqref{eq:ncvx_asym} is
\begin{equation}
\label{eq:grad_asym}
\nabla f_\lambda(U, V) = \begin{bmatrix*}
	\nabla \phi(U V\transpose) V + \lambda U \\
	\nabla \phi(U V\transpose)\transpose U + \lambda V
\end{bmatrix*}.
\end{equation}
where the first block corresponds to $U$ and the second to $V$.
The Hessian quadratic form is
\begin{equation}
\label{eq:hess_asym}
\begin{aligned}
	\nabla^2 f_\lambda(U, V)[(\Udt, \Vdt), (\Udt, \Vdt)]
	&= 2 \ip{\nabla \phi(U V\transpose)}{\Udt \Vdt\transpose} + \lambda (\normF{\Udt}^2 + \normF{\Vdt}^2) \\
	&\qquad + \nabla^2 \phi(U V\transpose)[U \Vdt\transpose + \Udt V\transpose, U \Vdt\transpose + \Udt V\transpose]
\end{aligned}
\end{equation}
for all $\Udt \in \Lspace$, $\Vdt \in \Rspace$.

In the symmetric case,
the gradient of $f_\lambda(U)$ as given in \eqref{eq:ncvx_sym} is
\begin{equation}
	\label{eq:grad_sym}
	\nabla f_\lambda(U) = 2(\nabla \phi(U U\transpose) U + \lambda U).
\end{equation}
The Hessian quadratic form is
\begin{equation}
	\label{eq:hess_sym}
	\nabla^2 f_\lambda(U)[\Udt, \Udt]
	= 2 \ip{\nabla \phi(U U\transpose) + \lambda I_d}{\Udt \Udt\transpose} + \nabla^2 \phi(U U\transpose)[U \Udt\transpose + \Udt U\transpose, U \Udt\transpose + \Udt U\transpose]
\end{equation}
for all $\Udt \in \Uspace$.

In the important case where $\phi$ is quadratic of the form \eqref{eq:quadraticphi},
the gradient and Hessian of $\phi$ are
\begin{align}
	\label{eq:quadphi_grad}
	\nabla \phi(M) &= \scrA^*(\scrA(M) - b), \qquad \text{and} \\
	\label{eq:quadphi_hess}
	\nabla^2 \phi(M)[\Mdt, \Mdt]
	&= \norm{\scrA(\Mdt)}^2.
\end{align}

\subsection{Key inequality for restricted strongly convex and smooth functions}
The following inequality for gradients of restricted strongly convex and smooth functions (in the sense of \Cref{sec:rscs}) will be critical.
This is conceptually similar to the ``bounded difference property'' proposed by \cite{Bi2021}.
\begin{lemma}
	\label{lem:ip_bd}
	Let $\phi$ on $\Mspace$ (resp.\ $\symms_d^+$) be convex and twice differentiable.
	Let $\genrk \geq 1$,
	and denote $\mu \coloneqq \mu_{\genrk}(\phi)$, $L \coloneqq L_{\genrk}(\phi)$.
	For all $M_1, M_2 \in \Mspace$ (resp.\ $\symms_d^+$) and all $\genmat \in \Mspace$ (resp.\ $\symms_d$) such that
	\[
		\rank(a M_1 + b M_2 + c \genmat) \leq \genrk \quad \text{for all} \quad a, b, c \in \R,
	\]
	we have
	\[
		\abs*{ \ip{\nabla \phi(M_2) - \nabla \phi(M_1)}{\genmat} - \frac{L + \mu}{2} \ip{M_2 - M_1}{\genmat} } \leq \frac{L - \mu}{2} \normF{M_2 - M_1} \normF{\genmat}.
	\]
\end{lemma}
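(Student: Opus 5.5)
We will reduce the statement to a one-dimensional integral of the Hessian along the segment from $M_1$ to $M_2$. Set $\Delta \coloneqq M_2 - M_1$ and $M_t \coloneqq M_1 + t\Delta$ for $t \in [0,1]$. In the symmetric case $M_t \in \symms_d^+$ by convexity of the PSD cone, so $\phi$ is defined along the segment. By the fundamental theorem of calculus applied to $t \mapsto \ip{\nabla\phi(M_t)}{\genmat}$,
\[
	\ip{\nabla \phi(M_2) - \nabla \phi(M_1)}{\genmat} = \int_0^1 \nabla^2\phi(M_t)[\Delta, \genmat]\, dt .
\]
Set $c \coloneqq \frac{L+\mu}{2}$. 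It then suffices to prove the pointwise bound
\[
	\abs{\nabla^2\phi(M_t)[\Delta,\genmat] - c\ip{\Delta}{\genmat}} \le \frac{L-\mu}{2}\normF{\Delta}\normF{\genmat} \quad \text{for each } t,
\]
and integrate.

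For the pointwise bound, fix $t$ and let $H \coloneqq \nabla^2\phi(M_t)$, viewed as a symmetric bilinear form. Consider the subspace $\mathcal{S} \coloneqq \operatorname{span}\{M_1, M_2, \genmat\}$. The hypothesis says every element of $\mathcal{S}$ has rank at most $\genrk$, so $\mathcal{S}$ contains $M_t$, $\Delta$, $\genmat$, and all their linear combinations. Since $\rank(M_t)\le\genrk$ and every $X \in \mathcal{S}$ has rank at most $\genrk$, the definition of $\mu_\genrk, L_\genrk$ (with $\genrkk=\genrk$) gives $\mu\normF{X}^2 \le H[X,X] \le L\normF{X}^2$ for all $X\in\mathcal{S}$. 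Equivalently, the bilinear form $B \coloneqq H - c\ip{\cdot}{\cdot}$ satisfies $\abs{B[X,X]} \le \frac{L-\mu}{2}\normF{X}^2$ on $\mathcal{S}$.

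A symmetric bilinear form on a finite-dimensional inner product space whose quadratic form is bounded by $\epsilon\normF{X}^2$ has operator norm at most $\epsilon$, by diagonalizing it on $\mathcal{S}$. Hence $\abs{B[\Delta,\genmat]} \le \frac{L-\mu}{2}\normF{\Delta}\normF{\genmat}$. Alternatively, one can use polarization with $X = \Delta/\normF{\Delta} \pm \genmat/\normF{\genmat}$:
\[
	4B[x,y] = B[x+y,x+y] - B[x-y,x-y] \le \epsilon\parens*{\normF{x+y}^2 + \normF{x-y}^2} = 4\epsilon
\]
for unit $x,y$. That settles the pointwise bound, and the case $\Delta=0$ or $\genmat=0$ is trivial.

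We expect no real obstacle here. The only point needing care is that the restricted bounds apply simultaneously to the base point $M_t$ and to all directions in the span, and this is exactly what the rank hypothesis on all combinations $aM_1+bM_2+c\genmat$ provides. In the symmetric case we also need to note that $\Delta,\genmat\in\symms_d$ are the admissible directions and that $M_t$ stays PSD, and we must ensure $t\mapsto \nabla\phi(M_t)$ is $C^1$ so the integral formula holds. Twice differentiability gives differentiability, and we can avoid needing continuity of the Hessian by instead using the mean value theorem on the scalar function $t\mapsto \ip{\nabla\phi(M_t)}{\genmat}$. That yields a single $t$ with the difference equal to $H_t[\Delta,\genmat]$, and the same pointwise bound then applies.
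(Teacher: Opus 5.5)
Your proof is correct and takes essentially the same route as the paper: write $\ip{\nabla\phi(M_2)-\nabla\phi(M_1)}{\genmat}$ as the integral of $\nabla^2\phi(M_t)[\Delta,\genmat]$ along the segment, then polarize with equal-norm directions. The rank hypothesis places both the base points $M_t$ and the polarization directions in a rank-$\genrk$ span; the paper rescales $\genmat$ so that $\normF{\genmat}=\normF{\Delta}$ and applies the $L$ and $\mu$ bounds directly to $\Delta\pm\genmat$, which is the same computation as your centered form $B$ with unit vectors. Your mean-value-theorem remark is a harmless refinement that avoids any regularity assumption behind the integral formula.
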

\begin{proof}
We use a standard integration argument (see, e.g.,
\cite{Li2019}).
Denote $\errmat \coloneqq M_2 - M_1$.
If $\genmat$ or $\errmat$ is zero, the result is trivial, so assume from now on that they are nonzero.
Rescaling if necessary, we can furthermore assume that $\normF{\genmat} = \normF{\errmat}$.
Then
\begin{align*}
	&\negqquad \ip{\nabla \phi(M_2) - \nabla \phi(M_1)}{\genmat} \\
	&= \int_0^1 \nabla^2 \phi(M_1 + t \errmat)[\errmat, \genmat] \ dt \\
	&= \frac{1}{4} \int_0^1 \parens*{ \nabla^2 \phi(M_1 + t \errmat)[\errmat + \genmat, \errmat + \genmat] - \nabla^2 \phi(M_1 + t \errmat)[\errmat - \genmat, \errmat - \genmat] } \ dt \\
	&\leq \frac{1}{4} (L \normF{\errmat + \genmat}^2 - \mu \normF{\errmat - \genmat}^2 ) \\
	&= \frac{L + \mu}{2} \ip{\errmat}{\genmat} + \frac{L - \mu}{4} (\normF{\errmat}^2 + \normF{\genmat}^2) \\
	&= \frac{L + \mu}{2} \ip{\errmat}{\genmat} + \frac{L - \mu}{2} \normF{\errmat} \normF{\genmat}.
\end{align*}
The inequality holds by $(\genrk, \genrk, \mu, L)$-restricted strong convexity and smoothness and because, by assumption, each of the matrices $M_1 + t \errmat$ (for all $t \in [0, 1]$), $\errmat + \genmat$, and $\errmat - \genmat$ has rank at most $\genrk$.
Similarly,
\begin{align*}
	\ip{\nabla \phi(M_2) - \nabla \phi(M_1)}{\genmat}
	&\geq \frac{1}{4} (\mu \normF{\errmat + \genmat}^2 - L \normF{\errmat - \genmat}^2 ) \\
	&= \frac{L + \mu}{2} \ip{\errmat}{\genmat} - \frac{L - \mu}{2} \normF{\errmat} \normF{\genmat}.
\end{align*}
This completes the proof.
\end{proof}

\section{Positive landscape proofs}
\label{sec:proofs_landscape}
\subsection{Unified proof}
\label{sec:proof_landscape_brief}
In this section we give a proof of \Cref{thm:global_main,thm:noisy}.
We will begin with a unified analysis for the two results,
specializing later to the specific assumptions on $\Mst$.
We defer certain technical details to lemmas proved in later subsections.
Thus, for now, we allow $\Mst$ to be an arbitrary rank-$\rstar$ matrix.
With $M$ as in the theorem statements,
we will set $\errmat = M - \Mst$ as the ``error'' matrix.
If $\errmat = 0$, we are already done,
so, from now on, we assume $\errmat \neq 0$.

First, we define a number of useful quantities related to $M$ and $\Mst$;
we state them as for the asymmetric case, but the symmetric case is identical with $d_1 = d_2 = d$, $V = U$ and some other minor adaptations that we will note.

We set $\PU \coloneqq U U^\dagger$ to be the orthogonal projection matrix onto $\range(U)$,
where $U^\dagger$ is the Moore-Penrose pseudoinverse of $U$, and we set $\PUp \coloneqq I_{d_1} - \PU$ as the orthogonal projection matrix onto $\range(U)^\perp$.
Similarly,
we set $\PV \coloneqq V V^\dagger$ and $\PVp \coloneqq I_{d_2} - \PV$.
We denote the subspace
\[
	\T \coloneqq \{ A V\transpose + U B\transpose  : A \in \Lspace, B \in \Rspace \} \subset \Mspace
\]
(in the symmetric case, we further enforce $A = B$).
The orthogonal projections onto $\T$ and its orthogonal complement $\Tp$ in $\Mspace$ are given, respectively, by
\begin{align*}
	\PT(Z) &= \PU Z + \PUp Z \PV = Z \PV + \PU Z \PVp, \qquad \text{and} \\
	\PTp(Z) &= \PUp Z \PVp.
\end{align*}
We then denote
\begin{align*}
	\Mp &\coloneqq \PTp(\Mst), \\
	\HT &\coloneqq \PT(\errmat), \qquad \qquad \qquad \text{and} \\
	\HTp &\coloneqq \PTp(\errmat) = - \Mp.
\end{align*}
Clearly, $\errmat = \HT + \HTp$.
The previous works \cite{Zhang2025a,Zhang2025c} obtain optimal results by considering in several places a rescaled error matrix of the form $t_1 \HT + t_2 \HTp$ for some carefully-chosen $t_1 \geq t_2 \geq 0$.
We will do the same here.
The decomposition
\begin{equation}
\label{eq:H_decomp}
t_1 \HT + t_2 \HTp = t_1 M + (t_1 - t_2) \Mp - t_1 M_*
\end{equation}
will be quite useful.

The following key lemma unifies the nonconvex landscape analysis of the symmetric and asymmetric cases:
\begin{lemma}
\label{lem:landscape_basic}
For any $t_1 \geq t_2 \geq 0$,
\[
\ip{\nabla \phi(M)}{t_1 \HT + t_2 \HTp}
\leq \lambda[ t_1 ( \nucnorm{\Mst} - \nucnorm{M} ) - (t_1 - t_2) \nucnorm{\Mp}]
+ t_2 L_2 \sigma_r(M) \nucnorm{\Mp}.
\]
\end{lemma}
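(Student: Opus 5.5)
The plan is to expand the left-hand side with the decomposition \eqref{eq:H_decomp} and bound each piece using first- and second-order criticality. Write $G \coloneqq \nabla \phi(M)$. Then
\[
\ip{G}{t_1 \HT + t_2 \HTp} = t_1 \ip{G}{M} - t_1 \ip{G}{\PT(\Mst)} - t_2 \ip{G}{\Mp},
\]
using $\Mst - \Mp = \PT(\Mst)$. It then suffices to show three facts:
\begin{itemize}
\item[(a)] $\ip{G}{M} = -\lambda \nucnorm{M}$;
\item[(b)] $-\ip{G}{\PT(\Mst)} \le \lambda(\nucnorm{\Mst} - \nucnorm{\Mp})$;
\item[(c)] $-\ip{G}{\Mp} \le (\lambda + L_2\sigma_r(M))\nucnorm{\Mp}$.
\end{itemize}
Indeed, $t_1\cdot$(b) $+\, t_2\cdot$(c) gives $-\lambda(t_1 - t_2)\nucnorm{\Mp} + t_1\lambda\nucnorm{\Mst} + t_2 L_2\sigma_r(M)\nucnorm{\Mp}$, which matches the claim.

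\emph{First order (a,b).} From \eqref{eq:grad_asym}, $GV = -\lambda U$ and $G\transpose U = -\lambda V$. Hence
\[
U\transpose G V = -\lambda U\transpose U = -\lambda V\transpose V,
\]
so for $\lambda > 0$ the factors are balanced: $U\transpose U = V\transpose V$. Consequently $U = W\Sigma^{1/2}R$ and $V = Y\Sigma^{1/2}R$ for orthonormal $W, Y$ spanning the column spaces and an orthogonal $R$ (restricted to the rank). This gives $\normF{U}^2 = \normF{V}^2 = \nucnorm{M}$, and therefore $\ip{G}{M} = -\lambda\normF{V}^2 = -\lambda\nucnorm{M}$, which is (a).

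Next, $G\PV = -\lambda U V^\dagger = -\lambda W Y\transpose$ and $\PU G = -\lambda W Y\transpose$. Therefore $\PT(G) = G\PV + \PU G\PVp = -\lambda W Y\transpose$. Pick $Z \in \Tp$ with $\opnorm{Z} \le 1$ and $\ip{Z}{\Mp} = \nucnorm{\Mp}$; then $WY\transpose + Z$ is a subgradient-type dual certificate with operator norm at most $1$. This yields
\[
-\ip{G}{\PT(\Mst)} = \lambda\ip{WY\transpose}{\Mst} = \lambda\bigl(\ip{WY\transpose + Z}{\Mst} - \nucnorm{\Mp}\bigr) \le \lambda(\nucnorm{\Mst} - \nucnorm{\Mp}),
\]
which is (b). If $\lambda = 0$, (a) and (b) are immediate: $\ip{G}{M} = 0$ and $\PT(G) = 0$.

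\emph{Second order (c).} Since $\Mp = \PUp\Mst\PVp$, it suffices to show $\abs{u\transpose G v} \le \lambda + L_2\sigma_r(M)$ for unit vectors $u \perp \range(U)$ and $v \perp \range(V)$. Choose a unit $q \in \R^r$ with $\norm{Uq}^2 = \norm{Vq}^2 = \sigma_r(M)$; by balance, take $q$ to be a bottom right singular vector (a kernel vector if $\rank U < r$). Test the Hessian \eqref{eq:hess_asym} in the direction $\Udt = u q\transpose$, $\Vdt = \pm v q\transpose$. The curvature term applies $\nabla^2\phi(M)$ to $\pm (Uq)v\transpose + u(Vq)\transpose$, which has rank at most $2$ while $M$ has rank at most $r$. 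By the definition of $L_2$ and orthogonality of the two pieces, this term is at most $2 L_2 \sigma_r(M)$. Nonnegativity of the Hessian form then gives
\[
\pm 2 u\transpose G v + 2\lambda + 2L_2\sigma_r(M) \ge 0,
\]
i.e.\ $\opnorm{\PTp(G)} \le \lambda + L_2\sigma_r(M)$. Duality with $\nucnorm{\cdot}$ then gives (c).

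\emph{Symmetric case and main obstacle.} The symmetric case runs the same way with $V = U$ and \eqref{eq:grad_sym}, \eqref{eq:hess_sym}. There $G\PU = -\lambda\PU$, so $\PT(G) = -\lambda\PU$; the test direction is $\Udt = u q\transpose$, and $\lambda I_d + G \succeq -L_2\sigma_r(M)$ on $\range(U)^\perp$ bounds $-\ip{G}{\Mp}$ using $\Mp \succeq 0$. The main obstacle is the second-order step: one must choose $q$ so that both $\norm{Uq}$ and $\norm{Vq}$ equal $\sigma_r(M)^{1/2}$, which relies on the balance obtained from first-order criticality. One must also verify that the perturbation has rank $\le 2$ and is evaluated at $M$ of rank $\le r$, so that the constant $L_2^{(r+\rstar)}$ applies.
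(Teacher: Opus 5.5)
Your rewriting $t_1 \HT + t_2 \HTp = t_1 M - t_1 \PT(\Mst) - t_2 \Mp$ is correct, and so are steps (a) and (b), the symmetric case, and the asymmetric case with $\lambda > 0$. This is essentially the paper's proof. Your certificate $W Y\transpose + Z$ plays the role of the paper's $Q_M + \Qp$. Bounding $\opnorm{\PTp(\nabla \phi(M))}$ with rank-one test directions is equivalent to the paper's summation over the columns of a balanced factorization of $\Mp$.

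The gap is step (c) in the asymmetric case with $\lambda = 0$, which the lemma must cover because \Cref{thm:matsens,thm:noisy} allow $\lambda = 0$. Balance $U\transpose U = V\transpose V$ follows from the first-order conditions only when $\lambda > 0$, as you note, yet step (c) uses it unconditionally. When $\lambda = 0$ the objective is invariant under $(U, V) \mapsto (UA, V (A^{-1})\transpose)$, so critical points need not be balanced. Then a unit $q$ with $\norm{Uq}^2 = \norm{Vq}^2 = \sigma_r(M)$ need not exist. For example, take $U = 2 U_0$ and $V = V_0/2$ with $U_0\transpose U_0 = V_0\transpose V_0$. Then $\norm{Uq}^2 + \norm{Vq}^2 \geq \frac{17}{4} \sigma_r(M)$ for every unit $q$, so your direction $(u q\transpose, \pm v q\transpose)$ yields only $\abs{u\transpose \nabla \phi(M) v} \leq \frac{17}{8} L_2 \sigma_r(M)$. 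The constant in front of $L_2 \sigma_r(M)$ feeds directly into $\mueff$, so this loses the sharp result.

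The missing idea is to use different vectors in the two factors. Take $\dot U = u w\transpose$ and $\dot V = \pm v x\transpose$ with unit $u \perp \range(U)$, $v \perp \range(V)$. The Hessian condition then gives $\pm 2 \ip{w}{x}\, u\transpose \nabla \phi(M) v + \lambda (\norm{w}^2 + \norm{x}^2) + L_2 (\norm{V w}^2 + \norm{U x}^2) \geq 0$.

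\emph{Case $\rank(M) = r$.} Take a balanced factorization $M = U_{\mathrm{bal}} V_{\mathrm{bal}}\transpose$ and a unit $h$ with $\norm{U_{\mathrm{bal}} h}^2 = \norm{V_{\mathrm{bal}} h}^2 = \sigma_r(M)$. Set $w = V^\dagger V_{\mathrm{bal}} h$ and $x = U^\dagger U_{\mathrm{bal}} h$. Then $\ip{w}{x} = 1$ and $\norm{Vw}^2 = \norm{Ux}^2 = \sigma_r(M)$. Equivalently, rebalance by an invertible $A$, which preserves second-order criticality, $M$, and $\T$.

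\emph{Case $\rank(M) < r$.} Rebalancing is impossible in general, since $U$ and $V$ can have different ranks. Instead take a unit $h$ with, say, $U h = 0$, and set $w = s h$, $x = s^{-1} h$. This gives $\pm 2 u\transpose \nabla \phi(M) v + s^2 L_2 \norm{V h}^2 \geq 0$, and letting $s \to 0$ yields $u\transpose \nabla \phi(M) v = 0$.

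These are exactly Cases 2 and 3 in the paper's proof of \Cref{lem:asym_Mperp}. With them, your (c) holds for all $\lambda \geq 0$, and the rest of your argument goes through.
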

We prove this in \Cref{sec:proof_basic_landscape}.
The symmetric case is a straightforward extension of arguments in \cite{Zhang2025a,Zhang2025c};
the asymmetric case is more novel and requires considerable additional care.

Subtracting $\ip{\nabla \phi(\Mst)}{t_1 \HT + t_2 \HTp}$ from both sides of \Cref{lem:landscape_basic}'s inequality,
we have
\begin{equation}
	\label{eq:basicineq1}
	\begin{aligned}
		\ip{\nabla \phi(M) - \nabla \phi(\Mst)}{t_1 \HT + t_2 \HTp}
		&\leq \lambda[ t_1 ( \nucnorm{\Mst} - \nucnorm{M} ) - (t_1 - t_2) \nucnorm{\Mp}] \\
		&\qquad + t_2 L_2 \sigma_r(M) \nucnorm{\Mp} - \ip{\nabla \phi(\Mst)}{t_1 \HT + t_2 \HTp}.
	\end{aligned}
\end{equation}

To handle the last term,
we will use the following technical result (which is key to the role of $r$ and $\rstar$):
\begin{lemma}
	\label{lem:richard}
	Suppose $\errmat \neq 0$, and set
	\begin{equation*}
		\alpha \coloneqq \frac{\normF{\Mp}}{\normF{\errmat}}, \quad \text{and} \quad \beta \coloneqq \begin{cases}
			\frac{\sigma_r(M)}{\normF{\errmat}} \cdot \frac{\nucnorm{\Mp}}{\normF{\Mp}} & \text{ if } \Mp \neq 0, \\
			0 & \text{ if } \Mp = 0.
		\end{cases}
	\end{equation*}
	Then
	\begin{equation*}
		\alpha^2 + \frac{r}{\rstar} \beta^2 \leq 1 + (\beta - \alpha)_+^2.
	\end{equation*}
\end{lemma}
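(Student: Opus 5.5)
The plan is to reduce the inequality to a lower bound on $\normF{\HT}$ in terms of $\sigma_r(M)$ and $\Mp$. We use the orthogonal splitting
\[
\normF{\errmat}^2 = \normF{\HT}^2 + \normF{\Mp}^2 ,
\]
so that $\alpha^2 = \normF{\Mp}^2/\normF{\errmat}^2$. It then suffices to show
\[
\normF{\HT}^2 \;\geq\; \Bigl(\tfrac{r}{\rstar}\beta^2 - (\beta-\alpha)_+^2\Bigr)\normF{\errmat}^2 .
\]
If $\rank(M) < r$ then $\sigma_r(M) = 0$ and $\beta = 0$, so the claim reduces to $\alpha \le 1$, which is trivial. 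The same holds if $\Mp = 0$. We therefore assume $\rank(M) = r$ and $\Mp \neq 0$.

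\textbf{Step 1: block form.} Work in orthonormal bases adapted to $\range(U)\oplus\range(U)^\perp$ and $\range(V)\oplus\range(V)^\perp$. In these bases
\[
M = \begin{bmatrix} M_{11} & 0 \\ 0 & 0 \end{bmatrix},
\qquad
\Mst = \begin{bmatrix} A & B \\ C & \Mp \end{bmatrix},
\]
where $M_{11}$ is an invertible $r\times r$ block with $\sigma_{\min}(M_{11}) = \sigma_r(M) =: \sigma$. Then
\[
\normF{\HT}^2 = \normF{M_{11}-A}^2 + \normF{B}^2 + \normF{C}^2 .
\]
Factor $\Mst = XY\transpose$ with $X, Y$ having $\rstar$ columns, and split $X = [X_1; X_2]$, $Y = [Y_1; Y_2]$ conformally. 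This gives $A = X_1Y_1\transpose$, $B = X_1Y_2\transpose$, $C = X_2Y_1\transpose$ and $\Mp = X_2Y_2\transpose$. In particular $\rank(A) \le \rstar$ and $\rank(\Mp)\le\rstar$, so $\nucnorm{\Mp}\le\sqrt{\rstar}\normF{\Mp}$ and hence $\beta \le \sqrt{\rstar}\,\sigma/\normF{\errmat}$.

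\textbf{Step 2: the uncoupled directions.} Since $\rank(A)\le\rstar$, Eckart--Young (or Weyl) gives a contribution of at least $\sum_{i>\rstar}\sigma_i(M_{11})^2 \ge (r-\rstar)\sigma^2$ from directions of $M_{11}$ that $A$ cannot reach. Combined with $\sigma^2 \ge \beta^2\normF{\errmat}^2/\rstar$, this is at least $\frac{r-\rstar}{\rstar}\beta^2\normF{\errmat}^2$. What remains is to extract from the other $\rstar$ directions, together with $B$ and $C$, an extra amount
\[
\min\{\beta^2,\; 2\alpha\beta - \alpha^2\}\,\normF{\errmat}^2 = \bigl(\beta^2 - (\beta-\alpha)_+^2\bigr)\normF{\errmat}^2 .
\]

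\textbf{Step 3: the scalar model.} The guiding calculation is the $2\times 2$ rank-one case. Take $\Mst = \begin{bmatrix} a & b \\ c & p\end{bmatrix}$ with $bc = ap$, against $M = \operatorname{diag}(\sigma, 0)$. Minimizing $(\sigma-a)^2 + b^2 + c^2$ over this constraint gives $b = c = \sqrt{ap}$, with optimum
\[
\min_a\,(\sigma-a)^2 + 2ap =
\begin{cases} 2p\sigma - p^2 & \text{if } p \le \sigma, \\ \sigma^2 & \text{if } p > \sigma. \end{cases}
\]
This is exactly the $\min\{\beta^2, 2\alpha\beta-\alpha^2\}$ structure, with $p \leftrightarrow \alpha$ and $\sigma \leftrightarrow \beta$ after normalization.

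\textbf{Step 4: the matrix version (main obstacle).} The hard part is making Step 3 rigorous for matrices while using the trailing directions of Step 2 simultaneously, without double counting. I would first bound $\normF{M_{11}-A}^2$ from below using only the top-$\rstar$ versus remaining singular directions. Next I would use the factorization to control the coupling: since $\Mp = X_2Y_2\transpose$, $B = X_1Y_2\transpose$, $C = X_2Y_1\transpose$, an AM--GM/Cauchy--Schwarz step along each singular direction of $\Mp$ should give $\normF{B}^2 + \normF{C}^2 \ge 2\sum_j \sqrt{a_j}\,p_j$-type bounds. Here the $p_j$ are the singular values of $\Mp$ and the $a_j$ measure $A$ along matched directions. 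Then I would minimize $\sum_j\bigl[(\sigma-a_j)^2 + 2a_jp_j\bigr]$ coordinatewise using the scalar bound of Step 3, giving $\sum_j \min\{\sigma^2, 2p_j\sigma - p_j^2\}$. Finally, concavity in $p_j$ together with the constraints $\sum_j p_j = \nucnorm{\Mp}$ and $\sum_j p_j^2 = \normF{\Mp}^2$ should let me lower bound this by $\min\{\beta^2, 2\alpha\beta-\alpha^2\}\normF{\errmat}^2$; this last step is where the definition of $\beta$ via $\nucnorm{\Mp}/\normF{\Mp}$ is meant to enter.

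If the direct coordinatewise approach becomes messy, the fallback is a variational one. I would fix $\sigma$ and $\Mp$ and minimize $\normF{\HT}^2$ over all rank-$\rstar$ completions $\Mst$. By rotational invariance the minimizer can be taken with $A$, $B$, $C$ simultaneously diagonal, which reduces the problem exactly to independent copies of the scalar problem of Step 3.
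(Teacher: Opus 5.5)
There is a genuine gap: Step 4 is the entire content of the lemma, and it is not carried out. Your architecture is the same as the paper's: a block form in bases adapted to $\range(U)$ and $\range(V)$, a factorization $\Mst = XY\transpose$, an Eckart--Young-type bound, then a scalar reduction. The computations in Steps 1--3 are sound, up to a slip noted below. But Step 2 and a separate bound on the top $\rstar$ directions cannot simply be added. Eckart--Young is tight exactly when $A$ is the rank-$\rstar$ truncation of $M_{11}$, and then the top directions contribute nothing to $\normF{M_{11}-A}^2$. In particular, the intermediate bound implicit in Step 4, $\normF{M_{11}-A}^2 \geq (r-\rstar)\sigma^2 + \sum_j(\sigma-a_j)^2$, is false: take that truncation with $\sigma_{\rstar+1}(M_{11}) = \cdots = \sigma_r(M_{11}) = \sigma < \sigma_1(M_{11})$.

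What is needed is a single coupled inequality. Let $s_j = \sigma_j(M_{11})$ and $a_j = \sigma_j(A)$ in decreasing order, and let $p^{\uparrow}$ be the singular values of $\Mp$ in increasing order. Then
\[
\normF{M_{11}-A}^2+\normF{B}^2+\normF{C}^2 \;\geq\; \sum_{i>\rstar} s_i^2 + \sum_{j\leq\rstar}\bigl[s_j^2-(s_j-p^{\uparrow}_j)_+^2\bigr].
\]
This is precisely the generalized Eckart--Young theorem the paper imports from \cite[Thm.~4.1]{Zhang2025c}, and it needs ideas absent from your plan. First rebalance the factorization ($X\mapsto XR$, $Y\mapsto YR^{-\top}$) so that $X_2\transpose X_2 = Y_2\transpose Y_2 = \diag(p^{\uparrow})$. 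This is possible when the $\Mp$ block has rank $\rstar$; otherwise argue by continuity. It gives $\normF{B}^2+\normF{C}^2 = \langle \diag(p^{\uparrow}), X_1\transpose X_1 + Y_1\transpose Y_1\rangle$. The singular-value AM--GM inequality $\lambda_j(X_1\transpose X_1+Y_1\transpose Y_1)\geq 2a_j$, together with von Neumann's trace inequality, then gives $\normF{B}^2+\normF{C}^2\geq 2\sum_j a_j p^{\uparrow}_j$. This bound is linear in $a_j$ and pairs the largest $a_j$ with the smallest $p_j$. Mirsky's inequality $\normF{M_{11}-A}^2\geq\sum_i(s_i-a_i)^2$, not plain Eckart--Young, keeps the top terms as $(s_j-a_j)^2$ with the true $s_j$. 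Only after minimizing each $(s_j-a_j)^2+2a_jp^{\uparrow}_j$ over $a_j\geq 0$ may you replace $s_j$ by $\sigma$, using that $s\mapsto s^2-(s-p)_+^2$ is nondecreasing.

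Your column-by-column AM--GM does not decouple, since the columns of $X_1$ and $Y_1$ need not align with singular vectors of $A$. The fallback ``by rotational invariance the minimizer is diagonal'' is not an argument either. The orthogonal changes of basis fixing $M_{11}$ and $\Mp$ are far too few to diagonalize $A$, $B$, $C$, so that reduction would itself require trace inequalities like those above.

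Two smaller corrections. First, since $\beta^2-(2\alpha\beta-\alpha^2) = (\beta-\alpha)^2$, the quantity $\min\{\beta^2, 2\alpha\beta-\alpha^2\}$ always equals $2\alpha\beta-\alpha^2$, so the identity in Step 2 is wrong. The correct target is $\beta^2-(\beta-\alpha)_+^2 = \max_{0\leq m\leq\alpha}(2m\beta-m^2)$, which is what your scalar model actually produces. Aiming for the min would only yield the weaker inequality with $(\beta-\alpha)^2$ in place of $(\beta-\alpha)_+^2$.

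Second, the final step is not a concavity argument. Write $\sigma^2-(\sigma-p_j)_+^2 = \max_{0\leq m_j\leq p_j}(2\sigma m_j-m_j^2)$ and take $m_j = tp_j$ with $t\in[0,1]$. The sum over $j$ is then at least $(2t\alpha\beta-t^2\alpha^2)\normF{\errmat}^2$, whose maximum over $t\in[0,1]$ is $(\beta^2-(\beta-\alpha)_+^2)\normF{\errmat}^2$. Adding your Step 2 bound $(r-\rstar)\sigma^2\geq\frac{r-\rstar}{\rstar}\beta^2\normF{\errmat}^2$ for the trailing terms then closes the proof.
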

In the symmetric case, this is \cite[Lem.~3.8]{Zhang2025c}.
We briefly describe in \Cref{sec:proof_validineq} below how that result can be extended to the asymmetric case.
The case $\Mp = 0$ is trivial, but we include it to avoid the need to explicitly handle it later.

With $\alpha, \beta$ as defined in \Cref{lem:richard} (recall that we have assumed $\errmat \neq 0$),
we can write
\begin{equation}
	\label{eq:ab_sub}
	\sigma_r(M) \nucnorm{\Mp} = \alpha \beta \normF{\errmat}^2.
\end{equation}

To lower-bound the left-hand side of \eqref{eq:basicineq1},
\Cref{lem:ip_bd} (with $\genrk = r + \rstar$) gives
\begin{align*}
&\negqquad \ip{\nabla \phi(M) - \nabla \phi(\Mst)}{t_1 \HT + t_2 \HTp} \\
&\geq \frac{L + \mu}{2} \ip{\errmat}{t_1 \HT + t_2 \HTp} - \frac{L - \mu}{2} \normF{\errmat} \normF{t_1 \HT + t_2 \HTp} \\
&= \frac{L + \mu}{2} ( t_1 \normF{\HT}^2 + t_2 \normF{\HTp}^2 ) - \frac{L - \mu}{2} \normF{\errmat} \sqrt{t_1^2 \normF{\HT}^2 + t_2^2 \normF{\HTp}^2} \\
&= \parens*{ \frac{L + \mu}{2} ( t_1 (1 - \alpha^2) + t_2 \alpha^2 ) - \frac{L - \mu}{2} \sqrt{t_1^2 (1 - \alpha^2) + t_2^2 \alpha^2} } \normF{\errmat}^2.
\end{align*}
To simplify the above expressions somewhat,
we will, similarly to \cite{Zhang2025c,Zhang2025a}, choose $t_1, t_2$ subject to the constraints
\begin{equation}
	\label{eq:t_constr}
	t_1 \geq t_2 \geq 0 \quad \text{and} \quad (1 - \alpha^2) t_1^2 + \alpha^2 t_2^2 = 1,
\end{equation}
which ensures that $\normF{t_1 \HT + t_2 \HTp} = \normF{\errmat}$.

Then, plugging the previous inequality and \eqref{eq:ab_sub} into \eqref{eq:basicineq1} and rearranging,
we obtain the delightful inequality
\begin{equation}
\label{eq:basicineq2}
\begin{aligned}
	&\negqquad \parens*{ \frac{L + \mu}{2} ( t_1 (1 - \alpha^2) + t_2 \alpha^2 ) - \frac{L - \mu}{2} - t_2 L_2 \alpha \beta } \normF{\errmat}^2 \\
	&\leq \lambda[ t_1 ( \nucnorm{\Mst} - \nucnorm{M} ) - (t_1 - t_2) \nucnorm{\Mp}]
	- \ip{\nabla \phi(\Mst)}{t_1 \HT + t_2 \HTp}.
\end{aligned}
\end{equation}
By \Cref{lem:richard},
we must furthermore have $\alpha^2 + \frac{r}{\rstar} \beta^2 \leq 1 + (\beta - \alpha)_+^2$.

We are free to choose any $t_1$ and $t_2$ subject to \eqref{eq:t_constr}.
A simple choice would be $t_1 = t_2 = 1$.
One can easily show that $\alpha \beta \leq \frac{1}{2 \sqrt{r/\rstar}}$, so
the left-hand side of \eqref{eq:basicineq2} would then be lower bounded by $(\mu - \frac{L_2}{2 \sqrt{r/\rstar}}) \normF{\errmat}^2$.
Indeed, for large $L$, we cannot do much better; see \Cref{rmk:Llarge}. Nevertheless, for finite $L$, we can do better with a more careful choice of $t_1$ and $t_2$.

The following lemma gives a tight lower bound on the coefficient of $\normF{\errmat}^2$ in \eqref{eq:basicineq2}:
\begin{lemma}
\label{lem:opt_messy}
For any $\rho \geq 1$, $L \geq \mu \geq 0$ with $L > 0$, and $L_2 \geq 0$,
\begin{gather*}
	\min_{\substack{\alpha, \beta \geq 0 \\ \alpha^2 + \rho^2 \beta^2 \leq 1 + (\beta - \alpha)_+^2 }}~
	\max_{ \substack{t_1 \geq t_2 \geq 0 \\ (1 - \alpha^2) t_1^2 + \alpha^2 t_2^2 = 1} }~
	\frac{L + \mu}{2} [(1 - \alpha^2) t_1 + \alpha^2 t_2] - \frac{L - \mu}{2} - t_2 L_2 \alpha \beta \\
	= \frac{1}{2} \parens*{ \sqrt{(L + \mu)^2 + \rho^{-2} L_2^2} - \rho^{-1} L_2 - (L - \mu) }.
\end{gather*}
This is strictly increasing in $\mu$ and is zero when
\[
	\mu = \frac{L_2}{2 \rho + L_2/L}.
\]
Furthermore, the maximal value of $t_1$ attained in the inner optimization is
\[
t_1 = \sqrt{1 + \parens*{ \frac{\rho^{-1} L_2}{L + \mu} }^2 } + \frac{\rho^{-1} L_2}{L + \mu}.
\]
\end{lemma}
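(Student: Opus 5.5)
The plan is to solve the inner maximization in closed form, which reduces the problem to minimizing one explicit function of $(\alpha,\beta)$ over the feasible set. Write $A \coloneqq \frac{L+\mu}{2}$, $B \coloneqq \frac{L-\mu}{2}$ and $k \coloneqq L_2/A$.

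\emph{Inner maximum.} For fixed $(\alpha,\beta)$ the objective is $w_1 t_1 + w_2 t_2 - B$, with $w_1 = A(1-\alpha^2)$ and $w_2 = A\alpha^2 - L_2\alpha\beta$. This is linear in $t$ over the ellipse $(1-\alpha^2)t_1^2+\alpha^2 t_2^2=1$. Ignoring the ordering constraints first, Cauchy--Schwarz gives the maximizer
\[
t \propto \parens*{\frac{w_1}{1-\alpha^2}, \frac{w_2}{\alpha^2}} = (A,\ A - L_2\beta/\alpha).
\]
The corresponding value is
\[
\sqrt{\frac{w_1^2}{1-\alpha^2}+\frac{w_2^2}{\alpha^2}} = A\sqrt{1-\alpha^2+(\alpha-k\beta)^2}.
\]
This maximizer satisfies $t_1 \ge t_2 \ge 0$ exactly when $k\beta \le \alpha$. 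Otherwise $w_2<0$, and the constrained maximum sits at the boundary $t_2=0$, $t_1 = (1-\alpha^2)^{-1/2}$, with value $A\sqrt{1-\alpha^2}$. In both cases the inner maximum equals $A\sqrt{h(\alpha,\beta)} - B$, where
\[
h(\alpha,\beta) \coloneqq 1-\alpha^2+(\alpha-k\beta)_+^2 .
\]
The degenerate cases $\alpha\in\{0,1\}$ follow by continuity or direct inspection. Moreover, in the interior case $t_1 = A/(A\sqrt h) = 1/\sqrt h$.

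\emph{Outer minimum.} It remains to minimize $h$ subject to $\alpha,\beta\ge0$ and $\alpha^2+\rho^2\beta^2 \le 1+(\beta-\alpha)_+^2$. I would show that the minimum lies on the arc $\alpha^2+\rho^2\beta^2=1$ with $\beta\le\alpha$, via three observations.
\begin{itemize}
\item If $k\beta\ge\alpha$, then $h = 1-\alpha^2$. Here the constraint should force $\alpha$ small enough that $h$ stays at least the arc minimum; I need to check this using $\rho\ge1$.
\item For fixed $\beta$ with $k\beta<\alpha$, $h$ is decreasing in $\alpha$, since $\partial_\alpha h = -2k\beta \le 0$. So $\alpha$ should be pushed to the constraint boundary.
\item In the region $\beta>\alpha$, the constraint becomes $(\rho^2-1)\beta^2 + 2\alpha\beta \le 1$. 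I would compare this directly with the arc and argue that these points are dominated.
\end{itemize}
On the arc, parametrize $\alpha=\cos\theta$, $\rho\beta=\sin\theta$, and set $s = k/\rho$. Then
\[
h = 1 + \tfrac{s^2}{2} - s\sin 2\theta - \tfrac{s^2}{2}\cos 2\theta,
\]
whose minimum over $\theta$ is
\[
1+\tfrac{s^2}{2} - s\sqrt{1+\tfrac{s^2}{4}} = \parens*{\sqrt{1+\tfrac{s^2}{4}}-\tfrac{s}{2}}^2 .
\]
Substituting $s/2 = \rho^{-1}L_2/(L+\mu)$ gives
\[
A\sqrt{h^*}-B = \tfrac12\parens*{\sqrt{(L+\mu)^2+\rho^{-2}L_2^2}-\rho^{-1}L_2-(L-\mu)},
\]
as claimed. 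At the minimizing $\theta$ one also needs $k\beta\le\alpha$, so that the interior formula applies; this must be verified. Then $t_1 = 1/\sqrt{h^*} = \sqrt{1+s^2/4}+s/2$, which is the stated value.

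\emph{Remaining claims and main obstacle.} Monotonicity in $\mu$ follows by differentiation: the derivative of the final expression is $\tfrac12\parens*{\frac{L+\mu}{\sqrt{(L+\mu)^2+\rho^{-2}L_2^2}}+1}>0$. The zero is found by solving
\[
\sqrt{(L+\mu)^2+\rho^{-2}L_2^2} = \rho^{-1}L_2 + L - \mu .
\]
Squaring gives $4L\mu = 2\rho^{-1}L_2(L-\mu)$, that is, $\mu = L_2/(2\rho+L_2/L)$. I expect the main obstacle to be the outer minimization bookkeeping: rigorously excluding the region $\beta>\alpha$, where the $(\beta-\alpha)_+^2$ term enlarges the feasible set, and excluding the regime $k\beta\ge\alpha$. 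My first attempt would be to show that any feasible point in these regions has $h$ at least the arc value, using $\rho\ge1$ and the fact that $\alpha\beta$ is maximized on the arc.
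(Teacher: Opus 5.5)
Your inner maximization is correct and is the paper's computation in different notation. The paper also finds the value $A\sqrt{h}$ with $h=1-\alpha^2+(\alpha-k\beta)_+^2$ (its $\frac{1+\kappa^{-1}}{2\bar{L}}$ is your $1/k$), and it has $t_1=1/\sqrt{h}$ in both regimes. Your trigonometric minimization on the arc replaces the paper's completing-the-square in its Case~3, and your zero/monotonicity calculation is fine. The check you left open at the minimizer is immediate: $\tan 2\theta=2/s$ gives $\theta<\pi/4$, so $\beta\le\alpha$, and $s\tan\theta=1-\tan^2\theta\le 1$, so $k\beta\le\alpha$.

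\emph{The gap} is the assertion that the minimum of $h$ over the feasible set lies on the arc, i.e.\ that $h\ge h^*$ everywhere. This lower bound is the actual content of the lemma (the paper's Cases~1--2), and your bullets only announce it.
\begin{itemize}
\item Your second bullet reaches the arc only when $\beta\le 1/\sqrt{1+\rho^2}$. For larger $\beta$, the largest feasible $\alpha$ lies on the curve $(\rho^2-1)\beta^2+2\alpha\beta=1$ with $\alpha<\beta$, so those points must be bounded directly.
\item The fact $\alpha\beta\le 1/(2\rho)$ on its own yields only $h\ge 1-s+k^2\beta^2$, which can fall below $h^*$ when $s$ and $\beta$ are small. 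The $k^2\beta^2$ term has to be retained via $\beta^2\ge\alpha\beta$.
\item The $t_1$ claim rests on the same bound. What the main proof uses is $t_1\le 1/\sqrt{h^*}$ at \emph{every} feasible $(\alpha,\beta)$, since $\bar t_1$ is taken at the unknown $(\alpha,\beta)$ of the critical point. The value of $t_1$ at the minimizer alone does not give this.
\end{itemize}

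The missing step does go through along your lines, using $\rho\ge 1$ twice.
\begin{itemize}
\item \emph{Reduction to $k\beta\le\alpha$.} The function $g(\alpha,\beta)=\alpha^2+\rho^2\beta^2-(\beta-\alpha)_+^2$ is nondecreasing in $\beta$; for $\beta\ge\alpha$ it equals $(\rho^2-1)\beta^2+2\alpha\beta$. So the feasible set is downward closed in $\beta$. Since $h(\alpha,\cdot)\equiv 1-\alpha^2$ on $[\alpha/k,\infty)$, replacing $\beta$ by $\min\{\beta,\alpha/k\}$ preserves both feasibility and $h$. Hence you may assume $k\beta\le\alpha$, where $h=1-2k\alpha\beta+k^2\beta^2$. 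The paper's restriction to $\bar{L}\beta\le\frac{1+\kappa^{-1}}{2}\alpha$ tacitly uses this same reduction.
\item \emph{Case $\beta\le\alpha$.} Raising $\alpha$ to $\sqrt{1-\rho^2\beta^2}$ stays feasible, keeps $k\beta\le\alpha$, and does not increase $h$. Your arc computation then finishes.
\item \emph{Case $\beta>\alpha$} (which forces $k<1$). Feasibility and $\beta^2\ge\alpha\beta$ give $(\rho^2+1)\alpha\beta\le 1$. Using $\rho^2+1\ge 2\rho$ and $\rho^2+1\le 2\rho^2$,
\[
h\ \ge\ 1-(2k-k^2)\,\alpha\beta\ \ge\ 1-\frac{2k-k^2}{\rho^2+1}\ \ge\ 1-s+\frac{s^2}{2}\ \ge\ h^*.
\]
This is exactly the paper's Case~1.
\end{itemize}

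The paper organizes the region $\beta\le\alpha$ differently. It splits on $\rho^{-1}\sqrt{1-\alpha^2}\gtrless\alpha/k$. On one side it uses the crude bound $F\ge A\sqrt{1-\alpha^2}$ together with $\sqrt{1+4x^2}\le\sqrt{1+x^2}+x$. On the other side it pushes $\beta$ (rather than $\alpha$) onto the arc. Once $\beta>\alpha$ is handled as above, the two routes are equivalent.
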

This is a more general version of a result in \cite{Zhang2025c,Zhang2025a}.
We provide a proof in \Cref{sec:mueff_proof}.
We of course take $\rho = \sqrt{\frac{r}{\rstar}}$,
in which case the value is precisely $\mueff$ from \eqref{eq:mueff},
and the threshold for $\mu$ from the lemma gives the condition \eqref{eq:mu_cond} to ensure $\mueff > 0$.
Applying the lemma to \eqref{eq:basicineq2},
we obtain
\begin{equation}
\label{eq:basicineq3}
\mueff \normF{\errmat}^2
\leq \lambda[ \tbr_1 ( \nucnorm{\Mst} - \nucnorm{M} ) - (\tbr_1 - \tbr_2) \nucnorm{\Mp}]
- \ip{\nabla \phi(\Mst)}{\tbr_1 \HT + \tbr_2 \HTp}
\end{equation}
for some $\tbr_1, \tbr_2$ satisfying
\begin{equation}
\label{eq:tbrs}
0 \leq \tbr_2 \leq \tbr_1 \leq \sqrt{1 + \parens*{ \frac{\rho^{-1} L_2}{L + \mu} }^2 } + \frac{\rho^{-1} L_2}{L + \mu} \leq 1 + \sqrt{2}.
\end{equation}
From now on, we consider \Cref{thm:global_main,thm:noisy} separately.
\subsubsection{Proof of approximate recovery result (Thm.~\ref{thm:noisy})}
The inequality \eqref{eq:basicineq3},
together with the norm duality inequality
\[
	\abs{\ip{\nabla \phi(\Mst)}{\tbr_1 \HT + \tbr_2 \HTp}}
	\leq \opnorm{\nabla \phi(\Mst)} \nucnorm{\tbr_1 \HT + \tbr_2 \HTp},
\]
gives
\begin{equation}
	\label{eq:noisy_basicineq1}
	\mueff \normF{\errmat}^2
	\leq \lambda[ \tbr_1 ( \nucnorm{\Mst} - \nucnorm{M} ) - (\tbr_1 - \tbr_2) \nucnorm{\Mp}]
	+ \opnorm{\nabla \phi(\Mst)} \nucnorm{\tbr_1 \HT + \tbr_2 \HTp}.
\end{equation}
Recalling \eqref{eq:H_decomp},
we can bound
\begin{equation}
\label{eq:nnbd_decomp}
\nucnorm{\tbr_1 \HT + \tbr_2 \HTp}
\leq \tbr_1 \nucnorm{\errmat} + (\tbr_1 - \tbr_2) \nucnorm{\Mp}.
\end{equation}
It is tempting to plug this directly into \eqref{eq:noisy_basicineq1},
but the perplexing term $(\tbr_1 - \tbr_2) \nucnorm{\Mp}$ will only get canceled if $\lambda \geq \opnorm{\nabla \phi(\Mst)}$.

To use \eqref{eq:nnbd_decomp} while allowing for $\opnorm{\nabla \phi(\Mst)} > \lambda$,
we partition $\opnorm{\nabla \phi(\Mst)}$ as
\newcommand{\nsbig}{\sigma_1}
\newcommand{\nssmall}{\sigma_2}
\begin{equation}
	\label{eq:noise_part}
	\opnorm{\nabla \phi(\Mst)}
	= \nsbig + \nssmall, \quad \text{where} \quad \nsbig \coloneqq (\opnorm{\nabla \phi(\Mst)} - \lambda)_+, \quad \nssmall \coloneqq \min\{ \opnorm{\nabla \phi(\Mst)}, \lambda \}.
\end{equation}
We will use \eqref{eq:nnbd_decomp} with $\nssmall \leq \lambda$.
For the other term $\nsbig$,
we use the bound
\begin{equation*}
	\nucnorm{\tbr_1 \HT + \tbr_2 \HTp}
	\leq \sqrt{r + \rstar} \normF{\tbr_1 \HT + \tbr_2 \HTp}
	= \sqrt{r + \rstar} \normF{\errmat}.
\end{equation*}
Plugging these into \eqref{eq:noisy_basicineq1},
we obtain
\begin{equation}
\label{eq:noisy_basicineq2}
\begin{aligned}
	\mueff \normF{\errmat}^2
	&\leq \lambda[ \tbr_1 ( \nucnorm{\Mst} - \nucnorm{M} ) - (\tbr_1 - \tbr_2) \nucnorm{\Mp}] \\
	&\qquad	+ \nsbig \sqrt{r + \rstar} \normF{\errmat} + \nssmall [\tbr_1 \nucnorm{\errmat} + (\tbr_1 - \tbr_2) \nucnorm{\Mp}] \\
	&\leq \tbr_1 \lambda [ \nucnorm{\Mst} - \nucnorm{M} + \nucnorm{\errmat} ] + \sqrt{r + \rstar} \nsbig \normF{\errmat}.
\end{aligned}
\end{equation}
There is nothing more we can do with the last term,
but it will be zero if $\lambda \geq \opnorm{\nabla \phi(\Mst)}$.
We now turn our attention to bounding  $\nucnorm{\Mst} - \nucnorm{M} + \nucnorm{\errmat}$.

We use a type of argument that is standard in the low-rank matrix recovery literature.
Similarly to the decomposition of $\errmat$ into $\HT$ and $\HTp$,
we now
decompose $\errmat$ in subspaces depending on the matrix $\Mst$.
Denote the singular value decomposition of $\Mst$ by $\Mst = \Ustbr \Sigma_* \Vstbr\transpose$,
where $\Ustbr \in \Lstspace, \Vstbr \in \Rstspace$ with $\Ustbr\transpose \Ustbr = \Vstbr\transpose \Vstbr = I_{\rstar}$,
and $\Sigma_* \in \R^{\rstar \times \rstar}$
is diagonal and strictly positive definite.

We set $\PUst = \Ustbr \Ustbr\transpose$, $\PUstp = I_{d_1} - \PUst$,
$\PVst = \Vstbr \Vstbr\transpose$, and $\PVstp = I_{d_2} - \PVst$,
and we denote the tangent space to $\Mst$ by
\[
\Tst = \{ A \Vstbr\transpose + \Ustbr B\transpose  : A \in \Lstspace, B \in \Rstspace \} \subset \Mspace.
\]
(In the symmetric case, we again have $\Ustbr = \Vstbr$ and $A = B$.)
The orthogonal projections onto $\Tst$ and $\Tstp$ are given by
\begin{align*}
\PTst(Z) &= \PUst Z + \PUstp Z \PVst = Z \PVst + \PUst Z \PVstp, \qquad \text{and} \\
\PTstp(Z) &= \PUstp Z \PVstp.
\end{align*}
We write $\HTst = \PTst(\errmat)$ and $\HTstp = \PTstp(\errmat) = \PTstp(M)$;
note that $\errmat = \HTst + \HTstp$.
By the fact that $\rank(\HTst) \leq 2 \rstar$,
we have
\begin{equation}
	\label{eq:nnH_ub}
	\nucnorm{\errmat} \leq \nucnorm{\HTst} + \nucnorm{\HTstp}
	\leq \sqrt{2 \rstar} \normF{\HTst} + \nucnorm{\HTstp}.
\end{equation}
Next, critically,
\[
\nucnorm{M} - \nucnorm{\Mst}
\geq \ip{\errmat}{G} \quad \text{for any} \quad G \in \partial\nucnorm{\Mst} = \{ \Ustbr \Vstbr\transpose + \Wp : \Wp \in \Tstp, \opnorm{\Wp} \leq 1 \}.
\]
In particular, we can choose $\Wp$ such that $\ip{\HTstp}{\Wp} = \nucnorm{\HTstp}$.
We then have, noting that $\normF{\Ustbr \Vstbr\transpose} = \sqrt{\rstar}$,
\begin{equation*}
\begin{aligned}
	\nucnorm{M} - \nucnorm{\Mst}
	&\geq 
	\ip{\HTst}{\Ustbr \Vstbr\transpose} + \ip{\HTstp}{\Wp} \\
	&\geq - \sqrt{\rstar} \normF{\HTst} + \nucnorm{\HTstp}.
\end{aligned}
\end{equation*}
Subtracting this from \eqref{eq:nnH_ub},
we obtain
\[
	\nucnorm{\Mst} - \nucnorm{M} + \nucnorm{\errmat}
	\leq (1 + \sqrt{2})\sqrt{\rstar} \normF{\HTst}
	\leq (1 + \sqrt{2})\sqrt{\rstar} \normF{\errmat}.
\]
Substituting this last inequality into \eqref{eq:noisy_basicineq2}
along with the bound on $\tbr_1$ from \eqref{eq:tbrs},
we obtain
\[
\mueff \normF{\errmat}^2 \leq [(1 + \sqrt{2})^2 \sqrt{\rstar} \lambda + \sqrt{r + \rstar} \nsbig] \normF{\errmat}.
\]
Recalling the value of $\nsbig$ from \eqref{eq:noise_part} and noting that $(1 + \sqrt{2})^2 \leq 6$,
we obtain \Cref{thm:noisy}.

\subsubsection{Proof of global optimality (Thm.~\ref{thm:global_main})}
We now assume that $\Mst$ is a global optimum of \eqref{eq:cvx_asym} or \eqref{eq:cvx_sym}.
We will show, in both the symmetric and asymmetric cases, that, for any $t_1 \geq t_2 \geq 0$,
\begin{equation}
\label{eq:gradMst_lb}
\ip{\nabla \phi(\Mst)}{t_1 \HT + t_2 \HTp}
\geq \lambda [ t_1 (\nucnorm{\Mst} - \nucnorm{M}) - (t_1 - t_2) \nucnorm{\Mp} ].
\end{equation}
Plugging this into \eqref{eq:basicineq3} (with $t_1 = \tbr_1, t_2 = \tbr_2$),
we obtain
\[
\mueff \normF{\errmat}^2 \leq 0,
\]
implying $\errmat = 0$ (in fact, this contradicts our assumption that $\errmat \neq 0$).

To prove \eqref{eq:gradMst_lb},
recall that, in the symmetric case, we assumed $\Mst$ is a global optimum of \eqref{eq:cvx_sym} in the sense of \eqref{eq:global_sym_comp} and \eqref{eq:global_sym_dualfeas}.
Note that
\begin{align*}
\ip{\nabla \phi(\Mst)}{t_1 \HT + t_2 \HTp}
&= \ip{\nabla \phi(\Mst) + \lambda I_d}{t_1 \HT + t_2 \HTp}
- \lambda \tr(t_1 \HT + t_2 \HTp).
\end{align*}
Recalling the decomposition \eqref{eq:H_decomp} and noting that $M, \Mst, \Mp \succeq 0$,
we have
\[
\tr(t_1 \HT + t_2 \HTp) = t_1 (\nucnorm{M} - \nucnorm{\Mst}) + (t_1 - t_2) \nucnorm{\Mp},
\]
and, by \eqref{eq:global_sym_comp} and \eqref{eq:global_sym_dualfeas},
\begin{align*}
&\negqquad \ip{\nabla \phi(\Mst) + \lambda I_d}{t_1 \HT + t_2 \HTp} \\
&= \ip{\underbrace{\nabla \phi(\Mst) + \lambda I_d}_{\succeq 0}}{\underbrace{t_1 M + (t_1 - t_2) \Mp }_{\succeq 0} } - t_1 \underbrace{\ip{\nabla \phi(\Mst) + \lambda I_d}{\Mst}}_{= 0} \\
&\geq 0.
\end{align*}
Thus we obtain \eqref{eq:gradMst_lb}.

In the asymmetric case, we assumed that $\Mst$ is a global optimum of \eqref{eq:cvx_asym} in the sense of \eqref{eq:global_asym}.
Again recalling the decomposition \eqref{eq:H_decomp},
this implies that
\begin{align*}
	\ip{\nabla \phi(\Mst)}{t_1 \HT + t_2 \HTp}
	&= t_1 [ \ip{-\nabla \phi(\Mst)}{ \Mst } - \ip{-\nabla \phi(\Mst)}{ M }] - (t_1 - t_2) \ip{-\nabla \phi(\Mst)}{\Mp} \\
	&\geq \lambda [ t_1 (\nucnorm{\Mst} - \nucnorm{M}) - (t_1 - t_2) \nucnorm{\Mp} ],
\end{align*}
where the inequality follows from the fact (by \eqref{eq:global_asym}) that $-\nabla \phi(\Mst) \in \lambda \partial \nucnorm{\Mst}$.
Thus we once again have \eqref{eq:gradMst_lb}.

This completes the proof of \Cref{thm:global_main}.

\subsection{Basic landscape lemma proof}
\label{sec:proof_basic_landscape}
In this section,
we prove \Cref{lem:landscape_basic}.
We prove it separately in the symmetric and asymmetric cases.
The symmetric case is a straightforward extension of arguments in \cite{Zhang2025c,Zhang2025a};
the asymmetric case requires considerable additional work.

\subsubsection{Basic landscape proof: symmetric case}
In this case, $M = U U\transpose$.
Recalling \eqref{eq:grad_sym} and \eqref{eq:hess_sym},
second-order criticality of $U$ for \eqref{eq:ncvx_sym} is equivalent to
\begin{align}
(\nabla \phi(M) + \lambda I_d) U = 0 \label{eq:socp_cond_1}
\end{align}
and, for all $\Udt \in \Uspace$,
\begin{align}
\ip{\nabla \phi(M) + \lambda I_d}{\Udt \Udt\transpose} + \frac{1}{2} \nabla^2 \phi(M)[U \Udt\transpose + \Udt U\transpose, U \Udt\transpose + \Udt U\transpose] \geq 0. \label{eq:socp_cond_2}
\end{align}
Choose $\Udt = v w\transpose$ for some $v \in \R^d$ such that $U\transpose v = 0$ and $w \in \R^r$. We will specify these later.
Then, by restricted smoothness and the fact that $U \Udt\transpose + \Udt U\transpose$ has rank at most $2$,
\begin{align*}
&\negqquad \frac{1}{2} \nabla^2 \phi(M)[U \Udt\transpose + \Udt U\transpose, U \Udt\transpose + \Udt U\transpose] \\
&= \frac{1}{2} \nabla^2 \phi(M)[U w v\transpose + v (Uw)\transpose, U w v\transpose + v (Uw)\transpose] \\
&\leq \frac{L_2}{2} \normF{ U w v\transpose + v (Uw)\transpose }^2 \\
&= L_2 \norm{Uw}^2 \norm{v}^2.
\end{align*}
The last equality used the fact that $U\transpose v = 0$.
We then choose $w$ to be the unit-norm vector that minimizes $\norm{Uw}$, for which $\norm{U w}^2 = \sigma_r^2(U) = \sigma_r(M)$.
Then the previous inequality and \eqref{eq:socp_cond_2} imply
\[
	0 \leq \ip{\nabla \phi(M) + \lambda I_d}{v v\transpose} + L_2 \sigma_r(M) \norm{v}^2.
\]
Write $\Mp = \Up \Up\transpose$ for some $\Up \in \R^{d \times \rstar}$.
Choosing $v$ to be each of the $\rstar$ columns of $\Up$ (thus satisfying $U\transpose v = 0$) and adding the resulting $\rstar$ inequalities, we obtain
\begin{equation}
	\label{eq:ineq_N}
	0 \leq \ip{\nabla \phi(M) + \lambda I_d}{\Up \Up\transpose} + L_2 \sigma_r(M) \normF{\Up}^2
	= \ip{\nabla \phi(M) + \lambda I_d}{\Mp} + L_2 \sigma_r(M) \nucnorm{\Mp}.
\end{equation}
Next, note that \eqref{eq:socp_cond_1} implies $\ip{\nabla \phi(M) + \lambda I_d}{\HT} = 0$.
Combining this with \eqref{eq:ineq_N} and recalling that $\HTp = -\Mp$ give
\begin{equation}
	\label{eq:ub_t}
	\ip{\nabla \phi(M) + \lambda I_d}{t_1 \HT + t_2 \HTp}
	\leq t_2 L_2 \sigma_r(M) \nucnorm{\Mp}.
\end{equation}
Recalling the decomposition \eqref{eq:H_decomp} and the fact that $M, \Mst, \Mp \succeq 0$, we have
\[
	\tr( t_1 \HT + t_2 \HTp )
	= t_1 (\nucnorm{M} - \nucnorm{\Mst}) + (t_1 - t_2) \nucnorm{\Mp}.
\]
Plugging this into \eqref{eq:ub_t} and rearranging completes the proof of \Cref{lem:landscape_basic} in the symmetric case.
\subsubsection{Basic landscape proof: asymmetric case}
In this case, $M = U V\transpose$.
Recalling \eqref{eq:grad_asym} and \eqref{eq:hess_asym},
second-order criticality of $(U, V)$ for \eqref{eq:ncvx_asym} is equivalent to
\begin{equation}
	\label{eq:asym_zerograd}
	\begin{aligned}
		\nabla \phi(M) V + \lambda U &= 0, \quad \text{and} \\
		\nabla \phi(M)\transpose U + \lambda V &= 0,
	\end{aligned}
\end{equation}
and, for all $\Udt \in \Lspace$ and $\Vdt \in \Rspace$,
\begin{equation}
	\label{eq:asym_hesspos}
	\ip{\nabla \phi(M)}{\Udt \Vdt\transpose} + \frac{\lambda}{2} (\normF{\Udt}^2 + \normF{\Vdt}^2) + \frac{1}{2} \nabla^2 \phi(M)[U \Vdt\transpose + \Udt V\transpose, U \Vdt\transpose + \Udt V\transpose] \geq 0.
\end{equation}

From these conditions, we can derive an inequality similar to \eqref{eq:ineq_N} from the symmetric case:
\begin{lemma}
\label{lem:asym_Mperp}
If $(U, V)$ is a second-order critical point of \eqref{eq:ncvx_asym}, then
\begin{equation*}
	0 \leq \ip{\nabla \phi(M)}{\Mp} + \lambda \nucnorm{\Mp} + L_2 \sigma_r(M) \nucnorm{\Mp}.
\end{equation*}
\end{lemma}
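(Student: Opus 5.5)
The plan is to imitate the symmetric argument that produced \eqref{eq:ineq_N}: plug rank-one perturbation directions built from the singular vectors of $\Mp$ into the second-order condition \eqref{asym_hesspos}, and sum. Since $\Mp = \PUp \Mst \PVp$ has rank at most $\rstar$, write its SVD as $\Mp = \sum_{i} s_i a_i b_i\transpose$. Here $s_i > 0$, the $a_i$ are orthonormal in $\range(U)^\perp$, and the $b_i$ are orthonormal in $\range(V)^\perp$, so $U\transpose a_i = 0$ and $V\transpose b_i = 0$. For each $i$, and for a unit vector $w \in \R^r$ and a scalar $c > 0$ to be chosen, set
\[
\Udt = c\sqrt{s_i}\, a_i w\transpose, \qquad \Vdt = c^{-1}\sqrt{s_i}\, b_i w\transpose .
\]
Then $\Udt \Vdt\transpose = s_i a_i b_i\transpose$ and $\frac{\lambda}{2}(\normF{\Udt}^2+\normF{\Vdt}^2) = \frac{\lambda s_i}{2}(c^2+c^{-2})$. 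Also
\[
U\Vdt\transpose + \Udt V\transpose = \sqrt{s_i}\bigl(c^{-1} (Uw) b_i\transpose + c\, a_i (Vw)\transpose\bigr).
\]
This matrix has rank at most $2$, and its two rank-one pieces are Frobenius-orthogonal because $a_i \perp \range(U)$. Hence, by the definition of $L_2 = L_2^{(r+\rstar)}$ (note $\rank(M) \le r$), the Hessian term in \eqref{asym_hesspos} is at most $\frac{L_2 s_i}{2}\bigl(c^{-2}\norm{Uw}^2 + c^2\norm{Vw}^2\bigr)$. Summing the resulting $\rstar$ inequalities over $i$ gives the claim, provided we can choose $w, c$ so that
\[
\tfrac{\lambda}{2}(c^2+c^{-2}) \le \lambda \quad\text{and}\quad \tfrac12\bigl(c^{-2}\norm{Uw}^2 + c^2\norm{Vw}^2\bigr) \le \sigma_r(M).
\]

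\textbf{Case $\lambda > 0$.} We are forced to take $c=1$. The first-order conditions \eqref{eq:asym_zerograd} give $U\transpose \nabla\phi(M) V = -\lambda U\transpose U$ and $U\transpose\nabla\phi(M)V = -\lambda V\transpose V$, so $U\transpose U = V\transpose V$. Balanced factors then have a common right singular structure: writing $U = P\Sigma R\transpose$ and $V = Q\Sigma R\transpose$, we get $M = P\Sigma^2 Q\transpose$, so $\sigma_r(M) = \sigma_r(U)^2$. Taking $w$ to be the bottom right singular vector gives $\norm{Uw}^2 = \norm{Vw}^2 = \sigma_r(M)$, as needed.

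\textbf{Case $\lambda = 0$.} Here $c$ is free, and minimizing over $c$ reduces the requirement to finding a unit $w$ with $\norm{Uw}\norm{Vw} \le \sigma_r(M)$. This is the step I expect to be the main obstacle, since $U$ and $V$ need not be balanced.

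My first approach is to exploit the invariance $f_0(UG, VG^{-\top}) = f_0(U,V)$ for invertible $G$. This is a linear reparametrization, so it maps second-order critical points to second-order critical points (the Hessian transforms by congruence at a critical point). It also leaves $M$, $\range(U)$ and $\range(V)$ unchanged, so $\Mp$ and the right-hand side are unaffected. If $U$ and $V$ both have full column rank, I choose $G$ so that the new factors are balanced, which is standard via the SVD of $M$, and then argue exactly as in the case $\lambda > 0$.

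If $\rank M < r$, then $\sigma_r(M) = 0$ and I need $w$ with $Uw = 0$ or $Vw = 0$. If either $U$ or $V$ is column-rank deficient, such a $w$ exists directly. Otherwise both have rank $r$; then $U\transpose U$ and $V\transpose V$ are invertible, so $\rank(UV\transpose) = r$, which contradicts $\rank M < r$. This rank-deficient bookkeeping is where I would be most careful.
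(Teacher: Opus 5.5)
Your proof is correct and follows essentially the paper's route. It uses the same rank-one test directions built from a balanced factorization (SVD) of $M_\perp$, summed over its $r_*$ components. It also uses the same three cases: $\lambda>0$ with balance from the first-order conditions, $\lambda=0$ with $\operatorname{rank} M = r$ via a balanced refactorization, and $\lambda=0$ with $\operatorname{rank} M < r$ via a null vector and a scaling limit. Your $\mathrm{GL}(r)$-invariance step in the full-rank case just repackages the paper's explicit choice $w = V^\dagger V_{\mathrm{bal}} h$, $x = U^\dagger U_{\mathrm{bal}} h$: that is exactly your direction at the balanced point pulled back through $G$, since $V^\dagger V_{\mathrm{bal}} = G^{-\top}$ and $U^\dagger U_{\mathrm{bal}} = G$.
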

We defer the proof to later in this section,
as it is considerably more involved than in the symmetric case.
The rest of the proof is conceptually similar to that for the symmetric case,
though additional care is needed with the nuclear norm terms.

Let $Q_\perp \in \partial \nucnorm{\Mp}$ such that $Q_\perp \transpose U = 0$ and $Q_\perp V = 0$
(we can do this because $\Mp\transpose U = 0$ and $\Mp V = 0$).
As $\HTp = - \Mp$, \Cref{lem:asym_Mperp} implies that
\begin{equation}
\label{eq:bd_term2}
\begin{aligned}
	\ip{\nabla \phi(M)}{t_2 \HTp}
	&\leq -\lambda \ip{\Qp}{t_2 \HTp} + t_2 L_2 \sigma_r(M) \nucnorm{\Mp} \\
	&= -\lambda \ip{\Qp}{t_1 \HT + t_2 \HTp} + t_2 L_2 \sigma_r(M) \nucnorm{\Mp}.
\end{aligned}
\end{equation}
Next, let $Q_M \in \partial \nucnorm{M}$ be the matrix sign of $M$,
that is, if $M = \Ubr \Sigma \Vbr\transpose$ is the singular value decomposition of $M$ with $\Ubr \in \R^{d_1 \times \rank(M)}$ and $\Vbr \in \R^{d_2 \times \rank(M)}$ having orthonormal columns,
we set $Q_M = \Ubr \Vbr\transpose$.
The zero-gradient conditions \eqref{eq:asym_zerograd} imply
\[
	\PT( \nabla \phi(M) )= - \lambda Q_M.
\]
This is trivial if $\lambda = 0$; if $\lambda > 0$, it follows from some simple manipulation of \eqref{eq:asym_zerograd} (see, e.g., \cite[Sec.~5.2]{McRae2026}).
In particular, this implies that
\begin{align*}
\ip{\nabla \phi(M)}{t_1 \HT}
&= - \lambda \ip{Q_M}{t_1 \HT} \\
&= - \lambda \ip{Q_M}{t_1 \HT + t_2 \HTp}.
\end{align*}
Together with \eqref{eq:bd_term2},
this implies
\begin{equation}
\label{eq:bd_term_comb}
\ip{\nabla \phi(M)}{t_1 \HT + t_2 \HTp}
\leq - \lambda \ip{Q_M + \Qp}{t_1 \HT + t_2 \HTp} + t_2 L_2 \sigma_r(M) \nucnorm{\Mp}.
\end{equation}

Recalling the decomposition \eqref{eq:H_decomp},
as $Q_M + Q_\perp \in \partial \nucnorm{t_1 M + (t_1 - t_2) \Mp}$,
we have
\begin{align*}
\ip{Q_M + \Qp}{t_1 \HT + t_2 \HTp}
&= \ip{Q_M + \Qp}{t_1 M + (t_1 - t_2) \Mp - t_1 \Mst} \\
&\geq \nucnorm{t_1 M + (t_1 - t_2) \Mp} - t_1 \nucnorm{\Mst} \\
&= (t_1 - t_2) \nucnorm{\Mp} + t_1 ( \nucnorm{M} - \nucnorm{\Mst} ).
\end{align*}
Plugging this into \eqref{eq:bd_term_comb} gives \Cref{lem:landscape_basic} in the asymmetric case.

\newcommand{\udto}{u}
\newcommand{\udti}{w}
\newcommand{\vdto}{v}
\newcommand{\vdti}{x}
We finish with a proof of the technical lemma above.
In the case $\lambda = 0$, the proof resembles in some ways that of \cite[Thm.~2.3(a)]{Ha2020}.
\begin{proof}[Proof of \Cref{lem:asym_Mperp}]
Similarly to the symmetric case,
we will use \eqref{eq:asym_hesspos} with careful choices of $\Udt \in \Lspace$, $\Vdt \in \Rspace$.
However, making the correct choices requires a bit more work than in the symmetric case.
We will again choose rank-1 directions:
we take $\Udt = \udto \udti\transpose$, $\Vdt = \vdto \vdti\transpose$ for some $\udto \in \R^{d_1}$, $\vdto \in \R^{d_2}$, and $\udti, \vdti \in \R^{r}$.
With this choice, \eqref{eq:asym_hesspos} gives
\begin{align*}
	0 &\leq \ip{\nabla \phi(M)}{\udto \vdto\transpose}\ip{\udti}{\vdti} + \frac{\lambda}{2} (\norm{\udto}^2 \norm{\udti}^2 + \norm{\vdto}^2 \norm{\vdti}^2) \\
	&\qquad +  \frac{1}{2} \nabla^2 \phi(M)[U \vdti \vdto\transpose + \udto (V \udti)\transpose, U \vdti \vdto\transpose + \udto (V \udti)\transpose].
\end{align*}
If we furthermore choose $\udto, \vdto$ such that $U\transpose \udto = V\transpose \vdto = 0$,
restricted smoothness implies
\begin{align*}
	\frac{1}{2} \nabla^2 \phi(M)[U \vdti \vdto\transpose + \udto (V \udti)\transpose, U \vdti \vdto\transpose + \udto (V \udti)\transpose]
	&\leq \frac{L_2}{2} \normF{U \vdti \vdto\transpose + \udto (V \udti)\transpose}^2 \\
	&= \frac{L_2}{2} ( \norm{\udto}^2 \norm{V \udti}^2 + \norm{\vdto}^2 \norm{U \vdti}^2).
\end{align*}
Plugging this into the previous inequality gives
\begin{equation*}
	\begin{aligned}
		0 &\leq \ip{\nabla \phi(M)}{\udto \vdto\transpose}\ip{\udti}{\vdti} + \frac{\lambda}{2} (\norm{\udto}^2 \norm{\udti}^2 + \norm{\vdto}^2 \norm{\vdti}^2) \\
		&\qquad+ \frac{L_2}{2} ( \norm{\udto}^2 \norm{V \udti}^2 + \norm{\vdto}^2 \norm{U \vdti}^2).
	\end{aligned}
\end{equation*}
To choose $\udto$ and $\vdto$,
write $\Mp = \Up \Vp\transpose$ for some $\Up \in \R^{d_1 \times \rstar}$ and $\Vp \in \R^{d_2 \times \rstar}$ satisfying $\Up\transpose \Up = \Vp\transpose \Vp$.
If $(\udto_1, \dots \udto_{\rstar})$ are the columns of $\Up$,
and $(\vdto_1, \dots \vdto_{\rstar})$ are the columns of $\Vp$,
then
\[
	\sum_{j = 1}^{\rstar} \udto_j \vdto_j\transpose = \Mp, \qquad \text{and} \qquad \sum_{j = 1}^{\rstar} \norm{\udto_j}^2 = \sum_{j = 1}^{\rstar} \norm{\vdto_j}^2 = \nucnorm{\Mp}.
\]
Note that these indeed satisfy $U\transpose \udto_j = V\transpose \vdto_j = 0$ for each $j$.
Taking $\udto = \udto_j, \vdto = \vdto_j$ in the previous inequality and summing over $j$,
we obtain
\begin{equation}
	\label{eq:asym_hess_prelim}
	0 \leq \ip{\udti}{\vdti} \ip{\nabla \phi(M)}{\Mp} + \parens*{ \lambda \frac{\norm{\udti}^2 + \norm{\vdti}^2}{2} + L_2 \frac{\norm{V \udti}^2 + \norm{U \vdti}^2}{2} } \nucnorm{\Mp}.
\end{equation}
\newcommand{\Ubal}{U_{\mathrm{bal}}}
\newcommand{\Vbal}{V_{\mathrm{bal}}}
To choose $\udti$ and $\vdti$ in \eqref{eq:asym_hess_prelim}, we consider several cases (this is similar to the proofs in \cite{Ha2020,Kim2025a}):
\begin{itemize}
	\item \textbf{Case 1:} $\lambda > 0$.
	In this case, the first-order conditions \eqref{eq:asym_zerograd} imply (see, e.g., \cite[Prop.~4.3]{Li2019}) that $(U, V)$ is balanced, that is, $U\transpose U = V\transpose V$.
	There is then a unit-norm $h \in \R^r$ such that $\norm{Uh}^2 = \norm{Vh}^2 = \sigma_r(M)$.
	We then take $\udti = \vdti = h$ in \eqref{eq:asym_hess_prelim} to obtain the result.
	\item \textbf{Case 2:} $\lambda = 0$, $\rank(M) = r$. With $\lambda = 0$, we can no longer assume $(U, V)$ is balanced.
	Let $M = \Ubal \Vbal\transpose$ be a balanced factorization of $M$ with $\Ubal\transpose \Ubal = \Vbal\transpose \Vbal$.
	There is then a unit-norm $h \in \R^r$ with $\norm{\Ubal h}^2 = \norm{\Vbal h}^2 = \sigma_r(M)$.
	The fact that $\rank(M) = r$ ensures $U$, $\Ubal$, $V$, and $\Vbal$ are all full-rank, $\range(U) = \range(\Ubal)$, and $\range(V) = \range(\Vbal)$.
	Therefore, $U U^\dagger \Ubal = \Ubal$, $V V^\dagger \Vbal = \Vbal$,
	and
	\[
	U V\transpose = M = \Ubal \Vbal\transpose \quad \Longrightarrow \quad U^\dagger \Ubal \Vbal\transpose (V^\dagger)\transpose = I_r
	\quad \Longrightarrow \quad (V^\dagger \Vbal)\transpose U^\dagger \Ubal = I_r.
	\]
	Then, setting $\udti = V^\dagger \Vbal h$ and $\vdti = U^\dagger \Ubal h$,
	we have $\norm{V\udti}^2 = \norm{U \vdti}^2 = \sigma_r(M)$,
	and
	\[
		\ip{\udti}{\vdti} = \ip{V^\dagger \Vbal h}{ U^\dagger \Ubal h } = \ip{h}{ (V^\dagger \Vbal)\transpose U^\dagger \Ubal h}
		= \ip{h}{h}
		= 1.
	\]
	We then obtain, from \eqref{eq:asym_hess_prelim} (with $\lambda = 0$),
	\[
	0 \leq \ip{\nabla \phi(M)}{\Mp} + L_2 \sigma_r(M) \nucnorm{\Mp},
	\]
	which is precisely the claimed result in the case $\lambda = 0$.
	\item \textbf{Case 3:} $\lambda = 0$, $\rank(M) < r$.
	In this case, $\sigma_r(M) = 0$, and at least one of $U$ and $V$ is rank-deficient.
	Assume, without loss of generality (otherwise transpose), that $\rank(U) < r$.
	Then, there exists a unit-norm $h \in \R^r$ such that $U h = 0$.
	Choose $\udti = s h$, $\vdti = s^{-1} h$ for $s > 0$;
	the inequality \eqref{eq:asym_hess_prelim} (with $\lambda = 0$) then becomes
	\begin{align*}
		0 &\leq \ip{\nabla \phi(M)}{\Mp} + L_2 \frac{s^2 \norm{V h}^2 + s^{-2} \norm{U h}^2}{2} \nucnorm{\Mp} \\
		&= \ip{\nabla \phi(M)}{\Mp} + s^2 L_2 \frac{\norm{V h}^2}{2} \nucnorm{\Mp}.
	\end{align*}
	Taking $s \to 0$ gives $0 \leq \ip{\nabla \phi(M)}{\Mp}$,
	which is precisely the result in the case $\lambda = 0$ and $\sigma_r(M) = 0$.
\end{itemize}
This completes the proof.
\end{proof}

\subsection{The key singular value inequality}
\label{sec:proof_validineq}
In this section, we briefly describe how the proof of \cite[Lem.~3.8]{Zhang2025c} can be extended to the general asymmetric case to give our \Cref{lem:richard}.

If $\Mp$ or $\sigma_r(M)$ is zero, then $\beta = 0$, and the result holds because $\alpha = \frac{\normF{\HTp}}{\normF{\errmat}} \leq 1$.
Thus, from now on, assume $\Mp \neq 0$ and $\sigma_r(M) > 0$.
In particular, with $M = U V\transpose$, we have $\rank(U) = \rank(V) = \rank(M) = r$.

\newcommand{\Usmall}{\Utl}
\newcommand{\Vsmall}{\Vtl}
\newcommand{\Ustt}{\Utl_{*}}
\newcommand{\Vstt}{\Vtl_{*}}
\newcommand{\Ustp}{U_{\perp}}
\newcommand{\Vstp}{V_{\perp}}
We factor $\Mst$ as $\Mst = \Ust \Vst\transpose$ for some $\Ust \in \Lstspace$, $\Vst \in \Rstspace$.
By an orthogonal change of basis (to which $\alpha$ and $\beta$ are invariant),
we can write
\[
	U = \begin{bmatrix*} \Usmall \\ 0_{(d_1 - r) \times r} \end{bmatrix*}, \quad V = \begin{bmatrix*} \Vsmall \\ 0_{(d_2 - r) \times r} \end{bmatrix*}, \quad \Ust = \begin{bmatrix*} \Ustt \\ \Ustp \end{bmatrix*}, \quad \Vst = \begin{bmatrix*} \Vstt \\ \Vstp \end{bmatrix*},
\]
where $\Usmall, \Vsmall \in \R^{r \times r}$, $\Ustt, \Vstt \in \R^{r \times \rstar}$, $\Ustp \in \R^{(d_1 - r) \times \rstar}$, and $\Vstp \in \R^{(d_2 - r) \times \rstar}$
As $\Usmall$ and $\Vsmall$ have full rank, we must have
\[
	\Mp = \begin{bmatrix*}
		0 & 0 \\
		0 & \Ustp \Vstp\transpose
	\end{bmatrix*},
\]
so $\Mp$ and $\Ustp \Vstp\transpose$ have the same nonzero singular values (this is why it was important to assume $\sigma_r(M) > 0$).

We can then decompose
\begin{align*}
	\normF{\errmat}^2
	&= \normF{U V\transpose - \Ust \Vst\transpose}^2 \\
	&= \normF{\Usmall \Vsmall\transpose - \Ustt \Vstt\transpose}^2 + \normF{\Ustp \Vstt\transpose}^2 + \normF{\Ustt \Vstp\transpose}^2 + \normF{\Ustp \Vstp\transpose}^2 \\
	&= \normF{\Usmall \Vsmall\transpose - \Ustt \Vstt\transpose}^2 + \ip{\Ustp\transpose \Ustp}{\Vstt\transpose \Vstt} + \ip{\Vstp\transpose \Vstp}{\Ustt\transpose \Ustt} + \normF{\Mp}^2.
\end{align*}
Similarly to \cite[Sec.~4]{Zhang2025c}, we write the vectors
\begin{align*}
	s &\coloneqq (\sigma_1(M), \dots, \sigma_r(M)) \in \R^r, \\
	s' &\coloneqq (\sigma_1(M), \dots, \sigma_{\rstar}(M)) \in \R^{\rstar}, \\
	d &\coloneqq (\sigma_{\rstar}(\Mp), \dots, \sigma_1(\Mp)) \in \R^{\rstar}.
\end{align*}
In particular, note that $s_1 \geq \cdots \geq s_r > 0$, while $0 \leq d_1 \leq \cdots \leq d_{\rstar}$.
Critically, we can assume, without loss of generality (if necessary replacing $(\Ust, \Vst)$ by $(\Ust R, \Vst (R^{-1})\transpose)$ for some $R \in \R^{\rstar \times \rstar}$), that $\Ustp$ and $\Vstp$ are balanced factors of (the nonzero block of) $\Mp$ in the sense that
\[
	\Ustp\transpose \Ustp = \Vstp\transpose \Vstp = \diag(d).
\]
The generalized Eckart-Young--type result \cite[Thm.~4.1]{Zhang2025c} (we omit the straightforward extension to the asymmetric case) then gives
\begin{align*}
	&\negqquad \normF{\Usmall \Vsmall\transpose - \Ustt \Vstt\transpose}^2 + \ip{\Ustp\transpose \Ustp}{\Vstt\transpose \Vstt} + \ip{\Vstp\transpose \Vstp}{\Ustt\transpose \Ustt} \\
	&= \normF{\Usmall \Vsmall\transpose - \Ustt \Vstt\transpose}^2 + \ip{\diag(d)}{\Ustt\transpose \Ustt + \Vstt\transpose \Vstt} \\
	&\geq \norm{s}^2 - \norm{(s' - d)_+}^2.
\end{align*}
Hence
\[
	\normF{\errmat}^2 \geq \norm{s}^2 - \norm{(s' - d)_+}^2 + \norm{d}^2.
\]
The rest of the proof of \Cref{lem:richard}, which consists purely of manipulations of these vector quantities,
follows exactly as in \cite[Sec.~4]{Zhang2025c}.

\subsection{Calculation of \texorpdfstring{$\mueff$}{μ\_eff}}
\label{sec:mueff_proof}
In this section, we prove \Cref{lem:opt_messy}.
To lighten notation, we set
\begin{equation*}
	\kapinv = \frac{\mu}{L} \quad \text{and} \quad \Lbr = \frac{ L_2}{L}.
\end{equation*}
Multiplying by $1/L$, the optimization problem becomes
\begin{equation}
	\label{eq:messyopt_simp}
	\min_{\substack{\alpha, \beta \geq 0 \\ \alpha^2 + \rho^2 \beta^2 \leq 1 + (\beta - \alpha)_+^2 }}~
	\max_{ \substack{t_1 \geq t_2 \geq 0 \\ (1 - \alpha^2) t_1^2 + \alpha^2 t_2^2 = 1} }~
	\frac{1 + \kapinv}{2} [(1 - \alpha^2) t_1 + \alpha^2 t_2] - \frac{1 - \kapinv}{2} - t_2 \Lbr \alpha \beta.
\end{equation}
We set
\[
	F(\alpha, \beta) \coloneqq \max_{ \substack{t_1 \geq t_2 \geq 0 \\ (1 - \alpha^2) t_1^2 + \alpha^2 t_2^2 = 1} }~\frac{1 + \kapinv}{2} [(1 - \alpha^2) t_1 + \alpha^2 t_2] - t_2 \Lbr \alpha \beta
\]
as the portion of the outer objective depending on $\alpha$ and $\beta$,
and we denote
\begin{equation}
	\label{eq:Fstar}
	\begin{aligned}
		F_* &\coloneqq \frac{\sqrt{(1 + \kapinv)^2 + \rho^{-2} \Lbr^2 } - \rho^{-1} \Lbr }{2} \\
		&= \sqrt{ \parens*{ \frac{1 + \kapinv}{2} }^2 + \frac{\rho^{-2} \Lbr^2}{2} - \frac{1 + \kapinv}{2} \rho^{-1} \Lbr \sqrt{1 + \frac{\rho^{-2} \Lbr^2}{(1 + \kapinv)^2}} }.
	\end{aligned}
\end{equation}

Our main task will be to show that
\begin{equation}
	\label{eq:Fab_optval}
	\min_{\substack{\alpha, \beta \geq 0 \\ \alpha^2 + \rho^2 \beta^2 \leq 1 + (\beta - \alpha)_+^2 }}~F(\alpha, \beta) = F_*.
\end{equation}
Along the way, we will show that this can always be done with
\begin{equation}
	\label{eq:t1_id}
	t_1 = \frac{1 + \kapinv}{2 F(\alpha, \beta)}
	\leq \frac{1+\kapinv}{2 F_*}.
\end{equation}

We first take care of several trivial cases:
\begin{itemize}
	\item $\Lbr = 0$,
	\item $\alpha = 0$, and
	\item $\alpha = 1$, in which case feasibility of $(\alpha, \beta)$ requires $\beta = 0$.
\end{itemize}
In each case, an optimal choice is $t_1 = t_2 = 1$, and we obtain
\[
	F(\alpha, \beta) = \frac{1 + \kapinv}{2} \geq F_*,
\]
and indeed \eqref{eq:t1_id} holds
(in fact, both are equalities if $\Lbr = 0$).
Thus, from now on, we will consider $\Lbr > 0$ and $0 < \alpha < 1$.

Under a change of variables $t_1 = \frac{\tau_1}{\sqrt{1 - \alpha^2}}$, $t_2 = \frac{\tau_2}{\alpha}$,
we have
\begin{equation*}
	F(\alpha, \beta) = \max_{ \tau_1^2 + \tau_2^2 = 1}~
	\frac{1 + \kapinv}{2} [\sqrt{1 - \alpha^2} \tau_1 + \alpha \tau_2] - \tau_2 \Lbr \beta
	\stquad \frac{\tau_1}{\sqrt{1 - \alpha^2}} \geq \frac{\tau_2}{\alpha} \geq 0.
\end{equation*}
Note that, for all $\alpha \in (0, 1)$,
\begin{equation}
	\label{eq:F_alpha_lb}
	\min_{\beta \geq 0}~F(\alpha, \beta) = \frac{1 + \kapinv}{2} \sqrt{1 - \alpha^2},
\end{equation}
and this is achieved for all $\beta$ such that $\Lbr \beta \geq \frac{1 + \kapinv}{2} \alpha$
with the choice $\tau_1 = 1$, $\tau_2 = 0$.
In this case, \eqref{eq:t1_id} again holds, as
\begin{align*}
	t_1 &= \frac{1}{\sqrt{1 - \alpha^2}} \\
	&= \frac{1 + \kapinv}{2 F(\alpha, \beta)}.
\end{align*}

In now only remains to handle the case $\Lbr \beta \leq \frac{1 + \kapinv}{2} \alpha$.
In this case,
\begin{equation}
	\label{eq:Fab_formula}
	\begin{aligned}
		F^2(\alpha, \beta) &= \parens*{ \frac{1 + \kapinv}{2} \sqrt{1 - \alpha^2} }^2 + \parens*{ \frac{1 + \kapinv}{2} \alpha - \Lbr \beta}^2 \\
		&= \parens*{ \frac{1 + \kapinv}{2} }^2 - (1 + \kapinv) \Lbr \alpha \beta + \Lbr^2 \beta^2,
	\end{aligned}
\end{equation}
and this holds for $\tau_1, \tau_2$ such that
\[
	\frac{\tau_2}{\tau_1}
	= \frac{\alpha - \frac{2 \Lbr}{1 + \kapinv} \beta}{\sqrt{1 - \alpha^2}}
	\quad \Longleftrightarrow \quad \frac{t_2}{t_1} = \frac{\sqrt{1 - \alpha^2}}{\alpha} \frac{\tau_2}{\tau_1} = 1 - \frac{2 \Lbr}{1 + \kapinv} \cdot \frac{\beta}{\alpha}.
\]
This implies
\begin{equation*}
	\begin{aligned}
		t_1^2
		&= \frac{t_1^2}{(1 - \alpha^2) t_1^2 + \alpha^2 t_2^2} \\
		&= \frac{1}{1 - \alpha^2 + \alpha^2 \parens*{ 1 - \frac{2 \Lbr}{1 + \kapinv} \cdot \frac{\beta}{\alpha} }^2} \\
		&= \frac{1}{1 - 2 \frac{2 \Lbr}{1 + \kapinv} \alpha \beta + \parens*{ \frac{2 \Lbr}{1 + \kapinv} }^2 \beta^2 } \\
		&= \parens*{ \frac{1 + \kapinv}{2} }^2 \cdot \frac{1}{F^2(\alpha, \beta)},
	\end{aligned}
\end{equation*}
so, once again,
\[
	t_1 = \frac{1 + \kapinv}{2 F(\alpha, \beta)}.
\]
This establishes \eqref{eq:t1_id}.

To calculate a (tight) lower bound on $F(\alpha, \beta)$ in the case $\beta \leq \frac{1 + \kapinv}{2 \Lbr} \alpha$,
we consider several subcases.
\begin{itemize}
	\item \textbf{Case 1:} $\alpha \leq \beta \leq \frac{1 + \kapinv}{2 \Lbr} \alpha$.
	This requires $\frac{1 + \kapinv}{2 \Lbr} \geq 1$.
	Feasibility of $(\alpha, \beta)$ implies
	\[
	1 \geq (\rho^2 - 1) \beta^2 + 2 \alpha \beta \geq (\rho^2 + 1)\alpha \beta.
	\]
	Then, from \eqref{eq:Fab_formula},
	\begin{align*}
		F^2(\alpha, \beta)
		&\geq \parens*{ \frac{1 + \kapinv}{2} }^2 - [(1 + \kapinv) \Lbr - \Lbr^2] \alpha \beta \\
		&\geq \parens*{ \frac{1 + \kapinv}{2} }^2 - \frac{(1 + \kapinv) \Lbr - \Lbr^2}{\rho^2 + 1} \\
		&\geq \parens*{ \frac{1 + \kapinv}{2} }^2 - \frac{1 + \kapinv}{2} \rho^{-1} \Lbr + \frac{\rho^{-2} \Lbr^2}{2} \\
		&\geq F_*^2.
	\end{align*}
	The second inequality used the fact that (in this case, as noted above) $\Lbr \leq 1 + \kapinv$.
	\item \textbf{Case 2:} $\beta \leq \alpha$, $\rho^{-1} \sqrt{1 - \alpha^2} \geq \frac{1 + \kapinv}{2 \Lbr} \alpha$.
	Then
	\[
	\alpha^2 \leq \frac{1}{1 + \parens*{ \frac{1 + \kapinv}{2 \rho^{-1} \Lbr} }^2 }
	\quad \Longrightarrow \quad 1 - \alpha^2 \geq \frac{1}{1 + \parens*{ \frac{2 \rho^{-1} \Lbr}{1 + \kapinv} }^2 },
	\]
	and then \eqref{eq:F_alpha_lb} implies
	\begin{align*}
		F(\alpha, \beta) &\geq \frac{1 + \kapinv}{2} \cdot \frac{1}{\sqrt{1 + \parens*{ \frac{2 \rho^{-1} \Lbr}{1 + \kapinv} }^2}} \\
		&\geq \frac{1 + \kapinv}{2} \cdot \frac{1}{\sqrt{1 + \frac{\rho^{-2} \Lbr^2}{(1 + \kapinv)^2} } + \frac{\rho^{-1} \Lbr}{1 + \kapinv} } \\
		&= F_*,
	\end{align*}
	where the last inequality uses the fact that $\sqrt{1 + 4 x^2} \leq \sqrt{1 + x^2} + x$ for $x \geq 0$.

	\item \textbf{Case 3:} $\beta \leq \alpha$, $\rho^{-1} \sqrt{1 - \alpha^2} \leq \frac{1 + \kapinv}{2 \Lbr} \alpha$.
	Feasibility then implies that
	\[
		\beta \leq \rho^{-1} \sqrt{1 - \alpha^2} \leq \frac{1 + \kapinv}{2 \Lbr} \alpha,
	\]
	so, noting that $F(\alpha, \beta)$ is monotonically decreasing in $\beta$, we have, from \eqref{eq:Fab_formula},
	\begin{align*}
		F^2(\alpha, \beta)
		&\geq \parens*{ \frac{1 + \kapinv}{2} }^2 - 2 \frac{1 + \kapinv}{2} \rho^{-1} \Lbr \alpha \sqrt{1 - \alpha^2} + \rho^{-2} \Lbr^2 (1 - \alpha^2) \\
		&= \parens*{ \frac{1 + \kapinv}{2} }^2 + \rho^{-2} \Lbr^2 (1 - \alpha^2) \\
		&\qquad + \frac{1 + \kapinv}{2} \rho^{-1} \Lbr \brackets*{ \parens*{ \sqrt{c} \sqrt{1 - \alpha^2} - \frac{\alpha}{\sqrt{c}} }^2 - \parens*{ c (1 - \alpha^2) + \frac{\alpha^2}{c}} }
	\end{align*}
	for any $c > 0$.
	To find a (tight) lower bound on this, we want to make $\parens*{ \sqrt{c} \sqrt{1 - \alpha^2} - \frac{\alpha}{\sqrt{c}} }^2$ be the only term depending on $\alpha$.
	This requires
	\[
	c - \frac{1}{c} = \frac{2 \rho^{-1} \Lbr}{1 + \kapinv}
	\qquad \Longleftrightarrow \qquad
	c = \frac{\rho^{-1} \Lbr}{1 + \kapinv} + \sqrt{1 + \frac{\rho^{-2} \Lbr^2}{(1 + \kapinv)^2}}.
	\]
	We then obtain
	\begin{align*}
		F^2(\alpha, \beta)
		&\geq \parens*{ \frac{1 + \kapinv}{2} }^2 + \rho^{-2} \Lbr^2 - \frac{1 + \kapinv}{2} \rho^{-1} \Lbr c \\
		&= \parens*{ \frac{1 + \kapinv}{2} }^2 + \frac{\rho^{-2} \Lbr^2}{2} - \frac{1 + \kapinv}{2} \rho^{-1} \Lbr \sqrt{1 + \frac{\rho^{-2} \Lbr^2}{(1 + \kapinv)^2}} \\
		&= F_*^2.
	\end{align*}
\end{itemize}
To show that the lower bound $F_*$ is indeed achieved,
take $\alpha_* \in (0, 1)$ such that
$\frac{\alpha_*}{\sqrt{1 - \alpha_*^2}} = c$ and $\beta_* = \rho^{-1} \sqrt{1 - \alpha_*^2}$.
Then
\[
\frac{\beta_*}{\alpha_*} = \frac{1}{\rho c} \leq \max\braces*{ 1, \frac{1 + \kapinv}{2 \Lbr} },
\]
and thus $(\alpha_*, \beta_*)$ is feasible, and $F(\alpha_*, \beta_*) = F_*$.

Finally, note that $F_* = \frac{1 - \kapinv}{2}$ (and hence the optimal value of the original problem is zero) if and only if
\begin{align*}
	(1 + \kapinv)^2 + \rho^{-2} \Lbr^2 &= [ \rho^{-1} \Lbr + (1 - \kapinv) ]^2 \\
	&= \rho^{-2} \Lbr^2 + (1 - \kapinv)^2 + 2 \rho^{-1} \Lbr (1 - \kapinv).
\end{align*}
Some algebra gives
\begin{gather*}
	4 \kapinv = 2 \rho^{-1} \Lbr(1 - \kapinv) \qquad \Longleftrightarrow \qquad 
	\kapinv = \frac{\Lbr}{2 \rho + \Lbr}.
\end{gather*}
Substituting $\kapinv = \mu/L$ and $\Lbr = L_2/L$
into \eqref{eq:t1_id} gives
\begin{gather*}
	t_1
	\leq \frac{1}{\sqrt{1 + \parens*{ \frac{\rho^{-1} \Lbr}{1 + \kapinv} }^2 } - \frac{\rho^{-1} \Lbr}{1 + \kapinv} }
	=  \sqrt{1 + \parens*{ \frac{\rho^{-1} L_2}{L + \mu} }^2 } + \frac{\rho^{-1} L_2}{L + \mu},
\end{gather*}
and multiplying \eqref{eq:messyopt_simp} by $L$ and substituting in \eqref{eq:Fstar} and \eqref{eq:Fab_optval} gives an optimum value of
\begin{align*}
	L \parens*{ F_* - \frac{1 - \kapinv}{2} }
	&= L \parens*{ \frac{\sqrt{(1 + \kapinv)^2 + \rho^{-2} \Lbr^2 } - \rho^{-1} \Lbr }{2} - \frac{1 - \kapinv}{2} } \\
	&= \frac{\sqrt{(L + \mu)^2 + \rho^{-2} L_2^2} - \rho^{-1} L_2 - (L - \mu)}{2}.
\end{align*} 
This completes the proof of \Cref{lem:opt_messy}.

\section{Counterexample construction}
\label{sec:proof_counter}
\subsection{General counterexample}

The following lemma gives a general class of counterexamples that we can then specialize for various purposes:
\begin{lemma}
	\label{lem:spur_gen}
	In the symmetric case (respectively, in the asymmetric case),
	for any
	\begin{itemize}
		\item Integers $\rstar \geq 1$, $\rmax \geq \rstar$, and $d \geq \rstar + \rmax$,
		\item Reals $\epsilon \in (0, 1)$ and $c \geq \cp > 0$ satisfying
		\[
			c^2 \rstar + \cp^2 \rmax = 1 - \epsilon,
		\]
	\end{itemize}
	there is $\Mst \in \Mspaced$ (resp.\ $\symms_d^+$) with rank $\rstar$ and $\sigma_1(\Mst) = \sigma_{\rstar}(\Mst) = 1$,
	an integer $n$, and a linear operator $\scrA \colon \Mspaced \to \R^n$ (resp.\ $\scrA \colon \symms_d \to \R^n$) such that the following hold:
	\begin{itemize}
		\item $L_{\genrk}(\scrA) = 1$ and $\mu_{\genrk}(\scrA) \geq \epsilon$ for all $\genrk \geq 1$, and, for all integers $r \in [\rstar, \rmax]$, we have
		\begin{equation*}
			\mu_{r + \rstar}(\scrA) = 1 - c^2 \rstar - \cp^2 r.
		\end{equation*}
		
		\commonpts
	\end{itemize}
	If, in addition, for some integer $r \in [\rstar, \rmax]$,
	\begin{equation}
		\label{eq:spur_cond}
		c \cp \rstar \geq 1 - c^2 \rstar - \cp^2 r
		= \epsilon  + \cp^2(\rmax - r),
	\end{equation}
	then
	\begin{itemize}
		\item (Asymmetric case) The nonconvex problem \eqref{eq:ncvx_asym} with the given $\phi$, $\lambda$, and $r$ has a second-order critical point $(U, V)$ with $U \neq 0$, $V \neq 0$ such that $U\transpose \Mst = 0$ and $\Mst V = 0$, or, respectively,
		
		\item (Symmetric case) The nonconvex problem \eqref{eq:ncvx_sym} with the given $\phi$, $\lambda$, and $r$ has a second-order critical point $U \neq 0$ with $\Mst U = 0$.
		
		\item Furthermore, if the inequality \eqref{eq:spur_cond} is strict,
		then $(U, V)$ (resp.\ $U$) is a local minimum.
	\end{itemize}
\end{lemma}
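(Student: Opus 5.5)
Following Example~\ref{ex:overpbad}, I would take $\scrA$ to be a rank-one perturbation of the identity,
\[
\norm{\scrA(\genmat)}^2 = \normF{\genmat}^2 - \ip{G}{\genmat}^2,
\qquad
G \coloneqq c\, P Q\transpose - \cp\, P_\perp Q_\perp\transpose ,
\]
realized, for example, as $\scrA(\genmat) = \genmat - (1-\sqrt{\epsilon'})\ip{G_0}{\genmat}G_0$ with $G_0 = G/\normF{G}$. Here $[P\ P_\perp]$ and $[Q\ Q_\perp]$ have orthonormal columns of widths $\rstar + \rmax$ (in the symmetric case $Q = P$), and $\Mst \coloneqq P Q\transpose$. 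Since $\normF{G}^2 = c^2\rstar + \cp^2\rmax = 1-\epsilon < 1$, we get $L_\genrk = 1$ (a rank-one $\genmat$ orthogonal to $G$ exists because $d \ge \rstar + \rmax$) and $\mu_\genrk \ge \epsilon$. Moreover,
\[
\mu_{r+\rstar} = 1 - \max_{\rank \genmat \le r+\rstar,\ \normF{\genmat}=1} \ip{G}{\genmat}^2 ,
\]
and this maximum equals the sum of the $r+\rstar$ largest squared singular values of $G$, namely $c^2\rstar + \cp^2 r$ when $r \le \rmax$, using $c \ge \cp$. This settles the first bullet.

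For the global optimum, I would choose $\xi$ so that $\nabla\phi(\Mst) = -\scrA^*(\xi) = -\lambda(PQ\transpose + W_\perp)$ with $\opnorm{W_\perp} \le 1$ and $W_\perp \in \Tstp$. To make $\Mst$ the unique optimum, I would take $W_\perp$ to be the natural strict subgradient, but only where it is consistent with the spurious point. The cleanest choice is
\[
\scrA^*(\xi) = \lambda\bigl(PQ\transpose + P_\perp Q_\perp\transpose\bigr),
\]
provided $\scrA$ is surjective onto $\Mspaced$. We can then pick $\xi$ in $\range(\scrA)$ by inverting $\scrA^*$, since $\scrA$ is injective and square when viewed as a map $\Mspaced \to \Mspaced$. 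Uniqueness of the optimum follows from strong convexity ($\mu \ge \epsilon > 0$), and $\opnorm{\scrA^*(\xi)} = \lambda$ holds automatically.

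For the spurious point, I would take $U = a P_\perp R$ and $V = a Q_\perp R$, where $R \in \R^{\rmax\times r}$ has orthonormal columns spanning the first $r$ coordinates, so that $M = a^2 P_r Q_r\transpose$ with $P_r, Q_r$ the first $r$ columns of $P_\perp, Q_\perp$. This point is balanced and satisfies $U\transpose\Mst = 0$ and $\Mst V = 0$. Compute
\[
\nabla\phi(M) = \scrA^*\scrA(M - \Mst) - \scrA^*(\xi)
= M - \Mst - \ip{G}{M-\Mst}\,G - \lambda\bigl(PQ\transpose + P_\perp Q_\perp\transpose\bigr),
\]
with $\ip{G}{M - \Mst} = -(a^2 r\cp + c\rstar)$. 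First-order criticality, $\nabla\phi(M) V + \lambda U = 0$, restricted to the $P_r$ block, reads
\[
a^2 - (a^2 r\cp + c\rstar)\cp - \lambda + \lambda = 0 ,
\]
which gives $a^2(1 - r\cp^2) = c\cp\rstar$. This is solvable with $a > 0$, and the other blocks vanish by orthogonality.

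The Hessian check is the main obstacle. I would decompose perturbations $(\Udt,\Vdt)$ according to the column and row blocks $P$, $P_r$, $P_\perp \setminus P_r$, and $Q$ likewise, and write the Hessian form from \eqref{eq:hess_asym} as
\[
2\ip{\nabla\phi(M)}{\Udt\Vdt\transpose} + \lambda\bigl(\normF{\Udt}^2 + \normF{\Vdt}^2\bigr) + \normF{\Delta}^2 - \ip{G}{\Delta}^2,
\qquad
\Delta = U\Vdt\transpose + \Udt V\transpose .
\]
The $\lambda$ terms combine as $\lambda(\normF{\Udt}^2 + \normF{\Vdt}^2) - 2\lambda\ip{PQ\transpose + P_\perp Q_\perp\transpose}{\Udt\Vdt\transpose} \ge 0$ by AM--GM, so the analysis reduces to the $\lambda = 0$ case. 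The dangerous directions are those that move toward $\Mst$, namely $\Udt = P w\transpose$ and $\Vdt = Q w'\transpose$. There the curvature is $-2(1 - \text{coefficient})$, coming from $-\Mst + c(a^2 r\cp + c\rstar)PQ\transpose$, balanced against the $\ip{G}{\Delta}$ coupling. I expect the resulting $2\times 2$ minor analysis, in which one uses the tight direction $w = w'$ equal to the weakest column, to reduce exactly to $c\cp\rstar \ge 1 - c^2\rstar - \cp^2 r$, with strict inequality giving positive definiteness modulo the symmetry directions. From there, local minimality follows by a standard argument: a positive definite Hessian on the quotient by the $O(r)$ symmetry orbit, or a direct second-order expansion using that the objective is a quartic polynomial. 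The unfilled directions in $P_\perp \setminus P_r$ (present when $r < \rmax$) carry curvature $\epsilon + \cp^2(\rmax - r)$, which must be checked to be nonnegative; this matches the right-hand form of \eqref{eq:spur_cond}.
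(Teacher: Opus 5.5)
Your construction is correct and is essentially the paper's: the rank-one perturbation $\mathcal A$, the choice $\mathcal A^*(\xi)=\lambda(PQ^\top+P_\perp Q_\perp^\top)$, the point $U=aP_r$, $V=aQ_r$, and the first-order equation $a^2(1-c_\perp^2 r)=c\,c_\perp r_*$. Your reduction to $\lambda=0$ is also valid. It even works at the level of function values, since $f_\lambda=f_0+\lambda g+\mathrm{const}$ with $g(U,V)=\tfrac12(\|U\|_F^2+\|V\|_F^2)-\langle PQ^\top+P_\perp Q_\perp^\top,UV^\top\rangle\ge0$, and $g$ vanishes at your point.

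The gap is the second-order part, which is the substance of the lemma. You only say you expect it to reduce to \eqref{eq:spur_cond}, and two of your predictions about it are wrong. Put $\epsilon_r\coloneqq1-c^2r_*-c_\perp^2r$, and let $P',Q'$ be the last $r_{\max}-r$ columns of $P_\perp,Q_\perp$. Write $\dot U=PS_1+P_rC_1+P'A+\dot U_c$ and $\dot V=QS_2+Q_rC_2+Q'B+\dot V_c$, with $\dot U_c,\dot V_c$ orthogonal to all of $[P\ P_\perp]$ and $[Q\ Q_\perp]$ respectively. Then:
\begin{itemize}
\item $\nabla\phi_0(M)=-\frac{\epsilon_r}{1-c_\perp^2r}PQ^\top-a^2P'Q'^\top$;
\item the blocks of $\Delta=a(P_r\dot V^\top+\dot UQ_r^\top)$ are mutually orthogonal;
\item $\langle G,\Delta\rangle=-c_\perp a\tr(C_1+C_2^\top)$.
\end{itemize}
Hence
\begin{align*}
\nabla^2 f_0[(\dot U,\dot V),(\dot U,\dot V)]
&= a^2\bigl(\|S_1\|_F^2+\|S_2\|_F^2\bigr)-\frac{2\epsilon_r}{1-c_\perp^2r}\tr(S_1S_2^\top)+a^2\|A-B\|_F^2\\
&\qquad+a^2\bigl(\|\dot U_c\|_F^2+\|\dot V_c\|_F^2\bigr)+a^2\bigl(\|C_1+C_2^\top\|_F^2-c_\perp^2\tr^2(C_1+C_2^\top)\bigr).
\end{align*}
First, the negative curvature toward $M_*$ is balanced by $\|\Delta\|_F^2$, not by the $\langle G,\Delta\rangle$ coupling, which vanishes on the $S$-block. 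The form is PSD iff $a^2\ge\epsilon_r/(1-c_\perp^2r)$, i.e.\ iff \eqref{eq:spur_cond}, so your route to second-order criticality does close once computed. Second, the unfilled block does not carry curvature $\epsilon+c_\perp^2(r_{\max}-r)$; it contributes $a^2\|A-B\|_F^2$. This is zero along $(\dot U,\dot V)=(P'A,Q'A)$ for every $\lambda\ge0$, because the $\lambda$-terms cancel there too.

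Consequently, when $r<r_{\max}$, the Hessian kernel is strictly larger than the tangent space of the $O(r)$ orbit, even with strict inequality in \eqref{eq:spur_cond}. (At $\lambda=0$, where you reduced, the relevant group is $GL(r)$, not $O(r)$.) So positive definiteness on your quotient fails, and no second-order argument can give local minimality. The paper meets exactly these directions: they are its $\dot R_\perp$, which has coefficient $0$ in \eqref{eq:counter_hess_final}. It treats them by going to third and fourth order: the cubic term vanishes along the kernel, and what remains is a squared norm.

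To keep your quotient argument instead, you need one more symmetry. Let $W=PQ^\top+P_\perp Q_\perp^\top$, $\Theta_P=PP^\top+P_\perp OP_\perp^\top+(I-PP^\top-P_\perp P_\perp^\top)$, and define $\Theta_Q$ likewise with the same $O\in O(r_{\max})$. Then $\Theta_P^\top G\Theta_Q=G$, $\Theta_P^\top W\Theta_Q=W$ and $\Theta_PM_*\Theta_Q^\top=M_*$, so $(U,V)\mapsto(\Theta_PU,\Theta_QV)$ leaves $f_\lambda$ invariant. Its orbit through your point is tangent to every $(P'A,Q'A)$. The Hessian kernel is then exactly the tangent space of the orbit of this group combined with $GL(r)$, and the slice argument goes through.
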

We prove this in \Cref{sec:proof_counter_lemma} below.
The counterexample construction is a generalization of those appearing in \cite{Zhang2025c,Zhang2025a}.

\subsection{Proofs of concrete counterexamples}
In this section, we use \Cref{lem:spur_gen} to prove the two counterexample results given in \Cref{sec:res_counter}.

\begin{proof}[Proof of \Cref{thm:optmucond}]
In \Cref{lem:spur_gen},	we take $\rmax = r$ and $\epsilon = \mu > 0$.
Note that this ensures $\mu_{\genrk}(\scrA) \geq \mu$ for all $\genrk \geq 1$.
The constraint on $c \geq \cp \geq 0$ is then
\[
	c^2 \rstar + \cp^2 r = 1 - \mu.
\]
We then calculate
\begin{align*}
	&\negqquad \max_{c, \cp}~c \cp \rstar \stquad c \geq \cp \geq 0,\ c^2 \rstar + \cp^2 r = 1 - \mu \\
	&= \max_{c, \cp}~( c \sqrt{\rstar} )(\cp \sqrt{r}) \sqrt{\frac{\rstar}{r}} \stquad c \geq \cp \geq 0,\ \frac{c^2 \rstar + \cp^2 r}{2} = \frac{1 - \mu}{2} \\
	&= \frac{1 - \mu}{2} \sqrt{\frac{\rstar}{r}}
\end{align*}
by the arithmetic-geometric mean inequality,
choosing $c, \cp$ such that $c \sqrt{\rstar} = \cp \sqrt{r}$ (we indeed have $c \geq \cp$ because $r \geq \rstar$).
With these choices, the condition \eqref{eq:spur_cond} becomes
\[
	\frac{1 - \mu}{2} \sqrt{\frac{\rstar}{r}}
	\geq \mu,
\]
which is equivalent to \eqref{eq:mu_counter} (including when the inequalities are strict).
The result follows by \Cref{lem:spur_gen}.
\end{proof}

\begin{proof}[Proof of \Cref{thm:overpbad}]
We now choose, in \Cref{lem:spur_gen}, $r = \rmax = \rbig$,
some $\epsilon \in (0, \mu)$ that we will further specify, and $c, \cp > 0$ (we will later show that $c \geq \cp$ under our assumptions) satisfying
\begin{align*}
	c^2 \rstar + \cp^2 \rsmall &= 1 - \mu, \quad \text{and} \\
	c^2 \rstar + \cp^2 \rbig &= 1 - \epsilon.
\end{align*}
Hence the resulting $\scrA$ will satisfy $\mu_{\rsmall+\rstar}(\scrA) = \mu$.
We can partially solve these equations for $c^2$ and $\cp^2$ to obtain
\begin{align*}
	\cp^2 (\rbig - \rsmall) &= \mu - \epsilon, \quad \text{and} \\
	c^2 \rstar \parens*{\frac{\rbig}{\rsmall} - 1} &= \frac{\rbig}{\rsmall}(1 - \mu) - (1 - \epsilon).
\end{align*}
Note that $\cp > 0$, and, if $c > 0$, then, for sufficiently small $\epsilon > 0$, we will have $c \cp \rstar > \epsilon$, satisfying the condition \eqref{eq:spur_cond} with strict inequality.
Thus it suffices to show $c \geq \cp$.
Noting from the previous equations that
\[
	\frac{\cp^2}{c^2} = \frac{\frac{\mu - \epsilon}{\rbig - \rsmall}}{\frac{\rsmall}{\rstar (\rbig - \rsmall)} \cdot \parens*{\frac{\rbig}{\rsmall}(1 - \mu) - (1 - \epsilon)}}
	= \rstar \cdot \frac{\mu - \epsilon}{\rbig(1 - \mu) - \rsmall (1 - \epsilon)},
\]
we will have $c \geq \cp$ for any $\epsilon > 0$
if
\begin{gather*}
	\mu \leq \frac{\rbig(1 - \mu) - \rsmall}{\rstar}
	\quad \Longleftrightarrow \quad
	\rbig \geq \frac{\rstar \mu + \rsmall}{1 - \mu}.
\end{gather*}
This completes the proof.
\end{proof}

\subsection{Proof of general counterexample}
\label{sec:proof_counter_lemma}
In this section, we provide a proof of \Cref{lem:spur_gen}.
The counterexample construction generalizes that of \cite{Zhang2025c,Zhang2025a}.

\subsubsection{Symmetric case}
We first prove \Cref{lem:spur_gen} in the symmetric case.
The asymmetric case will be a slight adaptation.

Let $Q \in \R^{d \times \rstar}$, $\Qp \in \R^{d \times \rmax}$ such that $[Q\ \Qp] \in \R^{d \times (\rstar + \rmax)}$ has orthonormal columns.
Denote $\PQ \coloneqq Q Q\transpose$ and $\PQp \coloneqq \Qp \Qp\transpose$;
these are the orthogonal projection matrices onto $\range(Q)$ and $\range(\Qp)$ respectively.
Set $G = c \PQ - \cp \PQp$.
As
\[
	\normF{G}^2 = c^2 \rstar + \cp^2 \rmax
	= 1 - \epsilon < 1,
\]
the quadratic form $\genmat \mapsto \normF{\genmat}^2 - \ip{G}{\genmat}^2$ on $\symms_d$ is (strictly) positive definite, so,
for some integer $n$, there is a map $\scrA \colon \symms_d \to \R^n$ such that, for all $\genmat \in \symms_d$,
\[
	\norm{\scrA(\genmat)}^2 = \normF{\genmat}^2 - \ip{G}{\genmat}^2.
\]
We must furthermore have
\[
\scrA^* \scrA(\genmat) = \genmat - \ip{G}{\genmat} G.
\]
For all $\genrk \geq 1$, $L_{\genrk}(\scrA) = 1$, and, by the bound on $\normF{G}^2$, $\mu_{\genrk}(\scrA) \geq \epsilon$.
Furthermore, for $\genrk \leq \rstar + \rmax$ (recalling that we assumed $c \geq \cp$),
\begin{align*}
\mu_{\genrk}(\scrA)
&= 1 - \sum_{\ell = 1}^\genrk \sigma_\ell^2(G) \\
&= 1 - \parens*{ c^2 \min\{ \genrk, \rstar \} + \cp^2 \max\{ \genrk - \rstar, 0 \} }.
\end{align*}
In particular, as $\rstar \leq r \leq \rmax$,
\[
\mu_{r + \rstar}(\scrA) = 1 - c^2 \rstar - \cp^2 r
\]
as claimed.

We will set $M_* = Q Q\transpose = \PQ$.
For $\lambda \geq 0$,
we construct $b \in \R^n$ such that (recall \eqref{eq:quadphi_grad})
\begin{equation}
	\label{eq:y_dualcert}
	\nabla \phi(\Mst) + \lambda I_d = \scrA^* (\scrA(\Mst) - b) + \lambda I_d = \lambda (I_d - \PQ - \PQp).
\end{equation}
Recalling the optimiality conditions \eqref{eq:global_sym_comp} and \eqref{eq:global_sym_dualfeas},
if \eqref{eq:y_dualcert} holds, then $\Mst$ is the global optimum (unique due to the strong convexity of $\phi$) of \eqref{eq:cvx_sym}.
Furthermore, \eqref{eq:y_dualcert} is equivalent to $\scrA^*(\xi) = \scrA^*(b - \scrA(\Mst)) = \lambda(\PQ + \PQp)$;
note that this implies $\opnorm{\scrA^*(\xi)} = \lambda$ as claimed.

To show that we can make \eqref{eq:y_dualcert} hold,
we choose $b = \scrA((1 + a) \PQ + \ap \PQp)$ for some $a, \ap \in \R$.
Then
\begin{align*}
\scrA^* (b - \scrA(\Mst))
&= \scrA^* \scrA( a \PQ + \ap \PQp ) \\
&= a \PQ + \ap \PQp - \ip{G}{a \PQ + \ap \PQp} G \\
&= a \PQ + \ap \PQp - (c \rstar a - \cp \rmax \ap)(c \PQ - \cp \PQp) \\
&= [(1 - c^2 \rstar)a + c \cp \rmax \ap] \PQ + [ (1 - \cp^2 \rmax) \ap + c \cp \rstar a ] \PQp.
\end{align*}
It then suffices to solve the linear system
\[
	\begin{bmatrix*}
		1 - c^2 \rstar & c \cp \rmax \\
		c \cp \rstar & 1 - \cp^2 \rmax
	\end{bmatrix*}
	\begin{bmatrix*}
		a \\ \ap
	\end{bmatrix*}
	=
	\begin{bmatrix*}
		\lambda \\ \lambda
	\end{bmatrix*}
\]
for $a, \ap$,
and then $\scrA^* (b - \scrA(\Mst)) = \lambda(\PQ + \PQp)$, and \eqref{eq:y_dualcert} will be satisfied.
This is possible if the matrix is invertible; indeed,
\begin{align*}
\det\parens*{\begin{bmatrix*}
		1 - c^2 \rstar & c \cp \rmax \\
		c \cp \rstar & 1 - \cp^2 \rmax
\end{bmatrix*} }
&= (1 - c^2 \rstar)(1 - \cp^2 \rmax) - c^2 \cp^2 \rmax \rstar \\
&= 1 - c^2 \rstar - \cp^2 \rmax \\
&= \epsilon \\
&> 0.
\end{align*}
Thus we have established \eqref{eq:y_dualcert}.
Note, furthermore, that if $\lambda = 0$,
then $a = \ap = 0$,
so $b = \scrA(\Mst)$.

It now remains to show that, if \eqref{eq:spur_cond} holds,
the nonconvex problem \eqref{eq:ncvx_sym} has a spurious second-order critical point (local minimum if \eqref{eq:spur_cond} is strict).
We will consider points of the form $U = \Qp R$ for some $R \in \R^{\rmax \times r}$.
We calculate, by \eqref{eq:quadphi_grad} and \eqref{eq:y_dualcert},
\begin{align*}
\nabla \phi(U U\transpose) + \lambda I_d
&= (\nabla \phi(U U\transpose) - \nabla \phi(M_*) )+ ( \nabla \phi(M_*) + \lambda I_d) \\
&= \scrA^*\scrA(U U\transpose - \PQ) + \lambda (I_d - \PQ - \PQp) \\
&= U U\transpose - \PQ - \ip{G}{U U\transpose - \PQ} G + \lambda (I_d - \PQ - \PQp) \\
&= \Qp R R\transpose \Qp\transpose - \PQ - (-\cp \normF{R}^2 - c \rstar)(c \PQ - \cp \PQp) \\
&\qquad+ \lambda (I_d - \PQ - \PQp) \\
&= -(1 - c^2 \rstar - c \cp \normF{R}^2) \PQ +\Qp[ R R\transpose - (\cp^2 \normF{R}^2 + c \cp \rstar)I_{\rmax} ] \Qp\transpose \\
&\qquad+ \lambda (I_d - \PQ - \PQp).
\end{align*}
For $\nabla f_\lambda(U)$ (given by \eqref{eq:grad_sym}) to be zero, we need
\begin{align*}
0 &= (\nabla \phi(U U\transpose) + \lambda I_d) \Qp R \\
&= \Qp [ R R\transpose R - (\cp^2 \normF{R}^2 + c \cp \rstar) R ].
\end{align*}
As $r \leq \rmax$, a natural choice is $R \in \R^{\rmax \times r}$ such that $R\transpose R = x I_r$ for some $x \geq 0$.
This implies $\normF{R}^2 = r x$.
With this choice, the previous condition becomes
\[
0 = x - \cp^2 r x - c \cp \rstar,
\]
which we can solve for
\begin{equation}
\label{eq:RTR}
x = \frac{c \cp \rstar}{1 - \cp^2 r} \qquad \Longrightarrow \qquad R\transpose R = \frac{c \cp \rstar}{1 - \cp^2 r} I_r.
\end{equation}
With this choice,
\begin{align*}
	R R\transpose - (\cp^2 \normF{R}^2 + c \cp \rstar)I_{\rmax}
	&= - x (I_{\rmax} - R R^\dagger) \\
	&= - \frac{c \cp \rstar}{1 - \cp^2 r} (I_{\rmax} - R R^\dagger),
\end{align*}
and
\begin{align*}
	1 - c^2 \rstar - c \cp \normF{R}^2
	&= 1 - c^2 \rstar - c \cp r \frac{c \cp \rstar}{1 - \cp^2 r} \\
	&= \frac{1 - c^2 \rstar - \cp^2 r}{1 - \cp^2 r}.
\end{align*}

Therefore,
\begin{equation}
\label{eq:spur_grad_calc}
\begin{aligned}
	\nabla \phi(U U\transpose) + \lambda I_d
	&= - \frac{1 - c^2 \rstar - \cp^2 r}{1 - \cp^2 r} \PQ - \frac{c \cp \rstar}{1 - \cp^2 r} \Qp (I_{\rmax} - R R^\dagger)\Qp\transpose \\
	&\qquad + \lambda (I_d - \PQ - \PQp).
\end{aligned}
\end{equation}
The Hessian quadratic form applied to $\Udt \in \Uspace$ is then (recall \eqref{eq:hess_sym} and \eqref{eq:quadphi_hess})
\begin{align*}
\nabla^2 f_\lambda(U)[\Udt, \Udt]
&= 2 \ip{\nabla \phi(U U\transpose) + \lambda I_d}{\Udt \Udt\transpose} + \norm{\scrA(U \Udt\transpose + \Udt U\transpose)}^2 \\
&= - 2 \frac{1 - c^2 \rstar - \cp^2 r}{1 - \cp^2 r}\ip{\PQ}{\Udt \Udt\transpose} - 2 \frac{c \cp \rstar}{1 - \cp^2 r} \ip{\Qp (I_{\rmax} - R R^\dagger)\Qp\transpose}{\Udt \Udt\transpose} \\
&\qquad + 2 \lambda \ip{I_d - \PQ - \PQp}{\Udt \Udt\transpose} + \norm{\scrA(\Qp R \Udt\transpose + \Udt R\transpose \Qp\transpose)}^2.
\end{align*}
We can write any $\Udt \in \Uspace$ as
\[
\Udt = Q \Sdt + \Qp \Rdt + \Wdt,
\]
where $\Sdt \in \R^{\rstar \times r}$, $\Rdt \in \R^{\rmax \times r}$, and $\Wdt \in \range(I_d - \PQ - \PQp)$.
We further decompose
\[
\Rdt = \Rdt_R + \Rdt_\perp,
\]
where $\range(\Rdt_R) \subseteq \range(R)$ and $\Rdt_\perp\transpose R = 0$.
Note that we then have
\begin{align*}
\ip{\PQ}{\Udt \Udt\transpose} &= \normF{\Sdt}^2, \\
\ip{I_d - \PQ - \PQp}{\Udt \Udt\transpose} &= \normF{\Wdt}^2, \quad \text{and} \\
\ip{\Qp (I_{\rmax} - R R^\dagger)\Qp\transpose}{\Udt \Udt\transpose} &= \normF{\Rdt_\perp}^2.
\end{align*}
Furthermore,
\begin{align*}
\norm{\scrA(\Qp R \Udt\transpose + \Udt R\transpose \Qp\transpose)}^2
&= \normF{ \Qp R \Udt\transpose + \Udt R\transpose \Qp\transpose }^2 - \ip{G}{\Qp R \Udt\transpose + \Udt R\transpose \Qp\transpose}^2 \\
&= 2 (\normF{\Sdt R\transpose}^2 + \normF{\Wdt R\transpose}^2 + \normF{\Rdt_\perp R\transpose}^2 ) \\
&\qquad + \normF{R \Rdt_R\transpose + \Rdt_R R\transpose}^2 - \cp^2 \tr^2( R \Rdt_R\transpose + \Rdt_R R\transpose ) \\
&\geq 2 \frac{c \cp \rstar}{1 - \cp^2 r}(\normF{\Sdt}^2 + \normF{\Wdt}^2 + \normF{\Rdt_\perp}^2) + (1 - \cp^2 r) \normF{R \Rdt_R\transpose + \Rdt_R R\transpose}^2.
\end{align*}
The inequality uses \eqref{eq:RTR} and the fact that $\tr^2( B ) \leq r \normF{B}^2$ for any $B \in \R^{r \times r}$.
Plugging these into the previous expression for $\nabla^2 f_\lambda(U)$,
we have
\begin{equation}
	\label{eq:counter_hess_final}
	\begin{aligned}
		\nabla^2 f_\lambda(U)[\Udt, \Udt]
		&\geq  2 \parens*{ \frac{c \cp \rstar}{1 - \cp^2 r} - \frac{1 - c^2 \rstar - \cp^2 r}{1 - \cp^2 r} } \normF{\Sdt}^2 \\
		&\qquad + 2 \parens*{ \frac{c \cp \rstar}{1 - \cp^2 r} - \frac{c \cp \rstar}{1 - \cp^2 r} } \normF{\Rdt_\perp}^2 \\
		&\qquad + 2\parens*{ \lambda + \frac{c \cp \rstar}{1 - \cp^2 r}}  \normF{\Wdt}^2 + (1 - \cp^2 r) \normF{R \Rdt_R\transpose + \Rdt_R R\transpose}^2 \\
		&= 2 \frac{ c \cp \rstar - (1 - c^2 \rstar - \cp^2 r)}{1 - \cp^2 r} \normF{\Sdt}^2 \\
		&\qquad + 2\parens*{ \lambda + \frac{c \cp \rstar}{1 - \cp^2 r}}  \normF{\Wdt}^2 + (1 - \cp^2 r) \normF{R \Rdt_R\transpose + \Rdt_R R\transpose}^2.
	\end{aligned}
\end{equation}
$\nabla^2 f_\lambda(U)$ is thus positive semidefinite if and only if
\[
c \cp \rstar \geq 1 - c^2 \rstar - \cp^2 r,
\]
which is precisely \eqref{eq:spur_cond}.

To show that $U$ is a local minimum if \eqref{eq:spur_cond} is strict,
it suffices to check that $f_\lambda$ is nondecreasing in the direction of $\Udt$ for which $\nabla^2 f_\lambda(U)[\Udt, \Udt] = 0$.
For such $\Udt$,
note that $f_\lambda(U + \Udt) - f_\lambda(U)$ is a fourth-order polynomial in $\Udt$,
and, because $\nabla f_\lambda(U) = 0$ and $\nabla^2 f_\lambda(U)[\Udt, \Udt] = 0$,
all terms of order 2 or less in $\Udt$ are zero,
leaving only the third- and fourth-order terms.
One can then easily calculate
\begin{equation}
	\label{eq:inc_deg4}
	f_\lambda(U + \Udt) - f_\lambda(U)
	= \frac{1}{2} \norm{\scrA(\Udt \Udt\transpose)}^2 + \ip{\scrA(U \Udt\transpose + \Udt U\transpose)}{\scrA(\Udt \Udt\transpose)}
\end{equation}
We now show that this is positive.

As \eqref{eq:spur_cond} is strict, by \eqref{eq:counter_hess_final}, the only $\Udt$ such that $\nabla^2 f_\lambda(U)[\Udt, \Udt] = 0$ have the form $\Udt = \Qp (\Rdt_R + \Rdt_\perp)$,
where $\Rdt_R$ must satisfy $R \Rdt_R\transpose + \Rdt_R R\transpose = 0$.
In particular, for such $\Udt$,
\begin{equation}
	\label{eq:UdtUdt}
	\Udt \Udt\transpose = \Qp ( \Rdt_R \Rdt_R\transpose + \Rdt_R \Rdt_\perp\transpose + \Rdt_\perp \Rdt_R\transpose + \Rdt_\perp \Rdt_\perp\transpose ) \Qp\transpose,
\end{equation}
and
\begin{equation}
	\label{eq:UdtU}
	U \Udt\transpose + \Udt U\transpose = \Qp( R \Rdt_\perp\transpose + \Rdt_\perp R\transpose ) \Qp\transpose.
\end{equation}
Note, from this last expression, that
\[
	\ip{G}{U \Udt\transpose + \Udt U\transpose} = - \cp \tr(R \Rdt_\perp\transpose + \Rdt_\perp R\transpose) = - 2 \cp \tr(\Rdt_\perp\transpose R) = 0.
\]
Therefore
\begin{equation}
	\label{eq:Udt_Aeq}
	\scrA^*\scrA(U \Udt\transpose + \Udt U\transpose) = U \Udt\transpose + \Udt U\transpose.
\end{equation}

From \eqref{eq:UdtUdt} and \eqref{eq:UdtU},
we calculate
\begin{align*}
\ip{U \Udt\transpose + \Udt U\transpose}{\Udt \Udt\transpose}
&= \ip{ R \Rdt_\perp\transpose + \Rdt_\perp R\transpose }{ \Rdt_R \Rdt_R\transpose + \Rdt_R \Rdt_\perp\transpose + \Rdt_\perp \Rdt_R\transpose + \Rdt_\perp \Rdt_\perp\transpose } \\
&= \ip{R \Rdt_\perp\transpose}{\Rdt_R \Rdt_\perp\transpose} + \ip{\Rdt_\perp R\transpose}{\Rdt_\perp \Rdt_R\transpose} \\
&= \ip{\Rdt_\perp\transpose \Rdt_\perp}{R\transpose \Rdt_R + \Rdt_R\transpose R}.
\end{align*}
Note, recalling \eqref{eq:RTR}, that, for some $x > 0$,
\begin{align*}
R\transpose \Rdt_R + \Rdt_R\transpose R
&= \frac{1}{x} \parens{ R\transpose \Rdt_R R\transpose R + R\transpose R \Rdt_R\transpose R } \\
&= \frac{1}{x} R\transpose(\Rdt_R R\transpose + R \Rdt_R\transpose) R \\
&= 0.
\end{align*}
Therefore, additionally using \eqref{eq:Udt_Aeq}, we obtain
\begin{equation}
	\label{eq:thirdorder_zero}
	\ip{\scrA(U \Udt\transpose + \Udt U\transpose)}{\scrA(\Udt \Udt\transpose)} = \ip{U \Udt\transpose + \Udt U\transpose}{\Udt \Udt\transpose} = 0.
\end{equation}
Substituting this into \eqref{eq:inc_deg4}, we have
\[
	f_\lambda(U + \Udt) - f_\lambda(U)
	= \frac{1}{2} \norm{\scrA(\Udt \Udt\transpose)}^2 \geq 0.
\]
We thus conclude that $U$ is a local minimum of $f_\lambda$.
This completes the proof of \Cref{lem:spur_gen} in the symmetric case.

\subsubsection{Extension to the asymmetric case}
In the asymmetric case, we use a near-identical construction as in the symmetric case above.
We consider the same $Q$ and $\Qp$ and the same (symmetric) $G$ and $\Mst$.
A subtle change is that we now define $\scrA$ implicitly over $\R^{d \times d}$ (rather than $\symms_d$) by
\[
\normF{\scrA(\genmat)}^2 = \normF{\genmat}^2 - \ip{G}{\genmat}^2
\]
for any $\genmat \in \R^{d \times d}$.
Critically, note that
\begin{equation}
\label{eq:Aineq_sym}
\normF{\scrA(\genmat)}^2 \geq \norm*{\scrA\parens*{\frac{\genmat + \genmat\transpose}{2}}}^2,
\end{equation}
and this inequality is \emph{strict} unless $\genmat$ is symmetric.

We similarly set
\[
	b = \scrA((1 + a) \PQ + \ap \PQp)
\]
for the exact same $a, \ap$ as in the symmetric case.
Then, we again have \eqref{eq:y_dualcert}, which implies
\[
\nabla \phi(\Mst) = \lambda(\PQ + \PQp) \in \lambda \partial \nucnorm{\Mst}.
\]
Thus $\Mst$ is a global optimum of \eqref{eq:cvx_asym},
and it is unique due to the strong convexity of $\phi$.
We then show that, for the same $U$ as in the symmetric case, $(U, U)$ is a second-order critical point (local optimum if \eqref{eq:spur_cond} is strict) of \eqref{eq:ncvx_asym}.

Recalling \eqref{eq:grad_asym}, showing $\nabla f_\lambda(U, U) = 0$ is identical to the symmetric case.
It thus remains to consider the Hessian.
By \eqref{eq:spur_grad_calc} (which still holds),
we have
\begin{equation}
\label{eq:negdef_grad}
\nabla \phi(U U\transpose)
= - \frac{1 - c^2 \rstar - \cp^2 r}{1 - \cp^2 r} \PQ - \frac{c \cp \rstar}{1 - \cp^2 r} \Qp (I_{\rmax} - R R^\dagger)\Qp\transpose - \lambda (\PQ + \PQp) \preceq 0.
\end{equation}
By \eqref{eq:hess_asym} and \eqref{eq:quadphi_hess}, for $\Udt, \Vdt \in \Uspace$,
\begin{align*}
&\negqquad \nabla^2 f_\lambda(U, U)[(\Udt, \Vdt), (\Udt, \Vdt)] \\
&= 2 \parens*{ \ip{\nabla \phi(U U\transpose)}{\Udt \Vdt\transpose} + \frac{\lambda}{2} (\normF{\Udt}^2 + \normF{\Vdt}^2) } 
+ \norm{\scrA( U \Vdt\transpose + \Udt U\transpose )}^2.
\end{align*}
Set, for convenience,
\[
\Adt \coloneqq \frac{\Udt + \Vdt}{2}, \qquad \Ddt \coloneqq \frac{\Udt - \Vdt}{2}.
\]
Note that $\Udt \Vdt\transpose + \Vdt \Udt\transpose \preceq 2 \Adt \Adt\transpose$.
Then, by \eqref{eq:negdef_grad},
we have
\[
\ip{\nabla \phi(U U\transpose)}{\Udt \Vdt\transpose} = \frac{1}{2} \ip{\nabla \phi(U U\transpose)}{\Udt \Vdt\transpose + \Vdt \Udt\transpose} \geq \ip{ \nabla \phi(U U\transpose) }{\Adt \Adt\transpose}.
\]
Next, note that
\[
\normF{\Udt}^2 + \normF{\Vdt}^2 \geq \frac{1}{2} \normF{\Udt + \Vdt}^2 = 2 \normF{\Adt}^2.
\]
Furthermore, by \eqref{eq:Aineq_sym},
\begin{equation}
\label{eq:Aineq2}
\begin{aligned}
	&\negqquad \norm{\scrA( U \Vdt\transpose + \Udt U\transpose )}^2 \\
	&\geq \norm*{\scrA\parens*{ \frac{ U \Vdt\transpose + \Udt U\transpose + \Vdt U\transpose + U \Udt\transpose }{2}}}^2 \\
	&= \norm{ \scrA\parens{ U \Adt\transpose + \Adt U\transpose } }^2.
\end{aligned}
\end{equation}
We can thus conclude that
\[
\nabla^2 f_\lambda(U, U)[(\Udt, \Vdt), (\Udt, \Vdt)] \geq
\nabla^2 f_\lambda(U, U)[(\Adt, \Adt), (\Adt, \Adt)] \geq 0,
\]
where the second inequality is from the symmetric case.
Thus $(U, U)$ is a second-order critical point of \eqref{eq:ncvx_asym}.

Finally, we show that, if the inequality \eqref{eq:spur_cond} is strict, $(U, U)$ is in fact a local optimum.
Similarly to the symmetric case, it suffices to show that, for any $\Udt, \Vdt$ such that $\nabla^2 f_\lambda(U, U)[(\Udt, \Vdt), (\Udt, \Vdt)] = 0$,
we have $f_\lambda(U + \Udt, U + \Vdt) - f_\lambda(U, U) \geq 0$.

Note, by \eqref{eq:Aineq_sym} and the note after it,
that the inequality \eqref{eq:Aineq2} is strict unless $U \Vdt\transpose + \Udt U\transpose$ is symmetric.
Then $\nabla^2 f_\lambda(U, U)[(\Udt, \Vdt), (\Udt, \Vdt)] = 0$ implies
\[
	U \Vdt\transpose + \Udt U\transpose = \Vdt U\transpose + U \Udt\transpose \quad \Longrightarrow \quad U(\Udt - \Vdt) = (\Udt - \Vdt)U\transpose \quad \Longrightarrow \quad \Ddt = \frac{\Udt - \Vdt}{2} = U \Sdt
\]
for some \emph{symmetric} $\Sdt \in \R^{r \times r}$.

Furthermore, as we must have $\nabla^2 f_\lambda(U, U)[(\Adt, \Adt), (\Adt, \Adt)] = 0$,
the calculations from the symmetric case applied to $\Adt$ imply that (by \eqref{eq:thirdorder_zero})
\begin{equation*}
\ip{\scrA(U \Adt\transpose + \Adt U\transpose)}{\scrA(\Adt \Adt\transpose)} = 0
\end{equation*}
and (by \eqref{eq:UdtU} and \eqref{eq:Udt_Aeq})
\begin{equation*}
\begin{aligned}
	\ip{\scrA(U \Adt\transpose + \Adt U\transpose)}{\scrA(\Ddt \Ddt\transpose)}
	&= \ip{U \Adt\transpose + \Adt U\transpose}{U \Sdt^2 U\transpose} \\
	&= \ip{R \Rdt_\perp\transpose + \Rdt_\perp R\transpose}{R \Sdt^2 R\transpose} \\
	&= 0.
\end{aligned}
\end{equation*}
We then can directly calculate, once again dropping all terms of degree 2 or less in $\Udt$ and $\Vdt$,
\begin{align*}
&\negqquad f_\lambda(U + \Udt, U + \Vdt) - f_\lambda(U, U) \\
&= \frac{1}{2} \norm{\scrA(\Udt \Vdt\transpose)}^2 + \ip{\scrA(U \Vdt\transpose + \Udt U\transpose)}{\scrA(\Udt \Vdt\transpose)} \\
&\geq \ip{\scrA(U (\Adt - \Ddt)\transpose + (\Adt + \Ddt) U\transpose)}{\scrA((\Adt + \Ddt)(\Adt - \Ddt)\transpose)} \\
&= \ip{\scrA(U \Adt + \Adt U\transpose + \underbrace{\Ddt U\transpose - U \Ddt\transpose}_{=0})}{\scrA(\Adt \Adt\transpose + \underbrace{\Ddt \Adt\transpose - \Adt \Ddt\transpose}_{\text{skew}} - \Ddt \Ddt\transpose)} \\
&= \ip{\scrA(U \Adt + \Adt U\transpose)}{\scrA(\Adt \Adt\transpose)} - \ip{\scrA(U \Adt + \Adt U\transpose)}{\scrA(\Ddt \Ddt\transpose)} \\
&= 0.
\end{align*}
This completes the proof of \Cref{lem:spur_gen}.

\section*{Acknowledgments and declarations}
The authors thank Irène Waldspurger and Yann Traonmilin for helpful discussions and suggestions.
AM was supported by the Hi!~PARIS and ANR/France 2030 program (ANR-23-IACL-0005)
and the Swiss SERI (contract MB22.00027).
RZ was supported by NSF CAREER Award ECCS-2047462 and ONR Award N00014-24-1-2671.
The authors have no relevant financial or non-financial interests to disclose.

\bibliographystyle{ieeetr}
\bibliography{refs}

\end{document}